\newtheorem{thm}{Theorem}[section]
\newtheorem{theorem}[thm]{Theorem}
\newtheorem{cor}[thm]{Corollary}
\newtheorem{prop}[thm]{Proposition}
\newtheorem*{question*}{Question}
\newtheorem{lemma}[thm]{Lemma}
\newtheorem{defn}[thm]{Definition}
\newtheorem{rem}[thm]{Remark}
\newtheorem{remark}[thm]{Remark}
\newcommand{\bbR}{\mathbb{R}}
\newcommand{\bbT}{\mathbb{T}}
\newcommand{\bbC}{\mathbb{C}}
\newcommand{\bbZ}{\mathbb{Z}}
\newcommand{\bbH}{\mathbb{H}}
\newcommand{\cS}{\mathbb{S}}
\newcommand{\gein}{g^E}
\newcommand{\M}{M}
\newcommand{\mA}{{\mathcal A}}
\newcommand{\mB}{{\mathcal B}}
\newcommand{\mC}{{\mathcal C}}
\newcommand{\mD}{{\mathcal D}}
\newcommand{\mO}{{\mathcal O}}
\newcommand{\R}{\mathbb R}
\newcommand{\N}{\mathbb N}
\newcommand{\CP}{\mathbb {CP}}
\newcommand{\T}{\mathbb T}
\newcommand{\del}{\partial}
\newcommand{\curv}{{c}}
\DeclareMathOperator{\Lap}{\triangle}
\DeclareMathOperator{\Hess}{Hess}
\DeclareMathOperator{\scal}{Scal}
\newcommand{\cp}{{\mathbb{CP}}}
\newcommand{\pol}{{\bf{p}}}
\begin{document}
	
\title{Einstein metrics via Derdzi\'nski duality}

\author[Gon\c calo Oliveira]{Gon\c calo Oliveira}
\address{Gon\c calo Oliveira, Department of Mathematics and Center for Mathematical Analysis, Geometry and Dynamical Systems, Instituto Superior T\'ecnico, Lisbon\\ Av. Rovisco Pais, 1049-001 Lisboa,  Portugal}
\email{goncalo.m.f.oliveira@tecnico.ulisboa.pt}
\email{galato97@gmail.com}

\author[Rosa Sena-Dias]{Rosa Sena-Dias}
\address{Rosa Sena-Dias, Department of Mathematics and Center for Mathematical Analysis, Geometry and Dynamical Systems, Instituto Superior T\'ecnico, Lisbon\\ Av. Rovisco Pais, 1049-001 Lisboa,  Portugal}
\email{senadias@math.ist.utl.pt}

\begin{thanks}
{Partially supported by the Funda\c{c}\~{a}o para a Ci\^{e}ncia e a Tecnologia (FCT/Portugal) through project EXPL/MAT-PUR/1408/2021EKsta.}
\end{thanks}

\begin{abstract}
Drawing on results of Derdzi\'nski's from the 80's, we classify conformally K\"ahler, $U(2)$-invariant, Einstein metrics on the total space of $\mO(-m)$, for all $m \in \mathbb{N}$. This yields infinitely many $1$-parameter families of metrics exhibiting several different behaviours including asymptotically hyperbolic metrics (more specifically of Poincar\'e type), ALF metrics, and metrics which compactify to a Hirzebruch surface $\mathbb{H}_m$ with a cone singularity along the ``divisor at infinity''. This allows us to investigate transitions between behaviours yielding interesting results. 
For instance, we show that a Ricci--flat ALF metric known as the Taub-bolt metric can be obtained as the limit of a family of cone angle Einstein metrics on $\cp^2 \# \overline{\cp}^2$ when the cone angle converges to zero.

We also construct Einstein metrics which are asymptotically hyperbolic and conformal to a scalar-flat K\"ahler metric. Such metrics cannot be obtained by applying Derdzi\'nski's theorem.
\end{abstract}

\maketitle

\tableofcontents

\section{Introduction}
In \cite{d}, Derdzi\'nski gave a method to construct Einstein metrics which are conformally K\"ahler. He proved that a $4$-dimensional Bach-flat K\"ahler metric whose scalar curvature is not identically zero is, at least locally, conformal to an Einstein metric. In light of this result, it is natural to search for Bach-flat K\"ahler metrics and study the Einstein metrics they give rise to through Derdzi\'nski's theorem.

Extremal K\"ahler metrics were introduced by Calabi as a canonical choice for a metric in a K\"ahler class. They are critical points for the $L^2$ norm of the Riemann tensor restricted to that K\"ahler class i.e. the Calabi functional. Although recent results make it so that we have a satisfying general theory for existence and uniqueness of extremal metrics (see \cite{cc} for instance), especially in the constant scalar curvature (cscK) case, there are few explicit examples of extremal K\"ahler metrics. Most known examples arise from the Calabi's ansatz (see \cite{c}), which in dimension $4$ yields metrics with a $U(2)$-symmetry. 

Concerning Bach-flat K\"ahler metrics, which are automatically extremal, even less is known. Chen, Lebrun and Weber discuss K\"ahler metrics with vanishing Bach tensor with an eye on applying Derdzi\'nski's construction on (compact) Del Pezzo surfaces (see \cite{clw}). In their article, the authors obtain a very interesting characterisation of Bach-flat metrics: these extremize the Calabi functional in a K\"ahler class which is itself extremal for the Futaki invariant. 
In general, it is hard to find which K\"ahler classes support Bach-flat metrics. 

The goal of the present article is to use Calabi's ansatz together with Derdzi\'nski's results to construct explicit examples of non-K\"ahler Einstein metrics and study their moduli. Our starting point is an existence result for the Bach-flat K\"ahler metrics which we will use to determine the conformal class on which we find Einstein metrics. Below we present an informal version of our classification of Bach--flat K\"ahler metrics with $U(2)$-symmetry on the total space of $\mathcal{O}(-m)\to\CP^1$. For the precise version of the statement see Theorem \ref{prop:Globalizing} later in the text.

\begin{thm}[Classification of $U(2)$-invariant Bach-flat K\"ahler metrics on $\mO(-m)$]
	
There is an explicit $1$-parameter family of Bach-flat K\"ahler metrics on the total space of $\mO(-m)\rightarrow \cp^1.$ When such a metric is complete, then it either has: 
\begin{itemize}
	\item exponential volume growth, or
	\item quartic volume growth, or
	\item finite volume.
\end{itemize}
On the other hand, if the metric is incomplete, it either:
\begin{itemize}
	\item extends as a metric on the Hirzebruch surface $\mathbb{H}_m=\mathbb{P}(\mathcal{O} \oplus \mathcal{O}(-m))$ with a cone angle along the $\cp^1$ at infinity, or
	\item develops another singular behaviour.
\end{itemize} 
Furthermore, all such behaviours occur and any $U(2)$-invariant Bach-flat K\"ahler on $\mO(-m)$ is one of these.
\end{thm}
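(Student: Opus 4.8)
The plan is to reduce the classification to the analysis of a single explicit ODE via the momentum construction, and then to read off the global behaviours from the boundary data of its solutions.

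\emph{Step 1 (reduction to a profile).} First I would write down the most general $U(2)$-invariant Kähler metric on the total space of $\mO(-m)\to\cp^1$ using Calabi's ansatz (as in \cite{c}): away from the zero section the space is cohomogeneity one with principal orbits the lens space $S^3/\mathbb{Z}_m$, so the metric is encoded by a single \emph{momentum profile} $\varphi(\tau)\ge 0$ on an interval $I$, together with the discrete datum $m$ which fixes the affine ``base polynomial'' $p(\tau)$ governing the fibre volume. In these coordinates the Kähler condition is automatic, the metric is smooth in the interior of $I$ wherever $\varphi>0$, and the scalar curvature $s$ is a second-order differential expression in $\varphi$, schematically $s=-p^{-1}(p\varphi)''+(\text{base curvature})/p$.

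\emph{Step 2 (the Bach-flat ODE).} The key reduction is that a Bach-flat Kähler surface is automatically extremal — vanishing of the relevant component of the Bach tensor forces $\nabla^{1,0}s$ to be holomorphic (cf. \cite{clw,d}) — which in the cohomogeneity-one setting means $s$ is an affine function $a\tau+b$ of the moment map. Substituting this into the scalar-curvature formula turns it into a linear second-order ODE for $p\varphi$ with polynomial right-hand side, integrable explicitly so that $\varphi$ becomes an explicit polynomial in $\tau$ with a few free constants. I would then impose the remaining part of the Bach-flat equation, equivalently Derdzi\'nski's condition that $s^{-2}g$ be Einstein on $\{s\ne 0\}$, obtaining algebraic relations among these constants; after quotienting by the overall scaling of the metric, exactly one parameter should survive, giving the advertised explicit $1$-parameter family.

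\emph{Step 3 (global behaviour).} Finally, for each value of the parameter I would determine the maximal interval $I=[\tau_0,\tau_1]$ on which $\varphi>0$ and the behaviour of $\varphi$ at its endpoints. A first-order zero with $|\varphi'|=2$ closes the metric up smoothly along a $\cp^1$ (the zero section), whereas a zero with $|\varphi'|\ne 2$ produces a cone singularity which, at the finite end corresponding to the divisor at infinity, exhibits the metric as a cone-angle metric on the Hirzebruch surface $\mathbb{H}_m$. When an endpoint is at $\tau\to\infty$, completeness and the asymptotic model are governed by the leading asymptotics of $\varphi$ and $p$: comparing $\vol(\tau)=\int^{\tau}p$ against the geodesic distance $r=\int^{\tau}\varphi^{-1/2}$ separates the complete cases by volume growth (exponential, quartic, or finite) and identifies them with the asymptotically hyperbolic, ALF, and finite-volume geometries, while the remaining degenerate zeros account for the ``other singular behaviour''. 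Exhibiting explicit parameter values in each regime shows all behaviours occur, and Steps 1--2 guarantee that every $U(2)$-invariant Bach-flat Kähler metric arises this way.

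The main obstacle is the combination of Step 2 with the bookkeeping of Step 3: one must correctly extract the extra Bach-flat equation beyond extremality and solve the resulting algebraic system to confirm that precisely one parameter remains, and then translate each analytic endpoint configuration into the correct geometric statement about completeness, volume growth and cone angle. The endpoint analysis at $\tau\to\infty$, which is where the three complete growth types are distinguished, is the most delicate part of the argument.
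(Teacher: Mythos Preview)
Your overall strategy matches the paper's: both reduce via Calabi's ansatz to a single profile (the paper uses action-angle coordinates and the polynomial $\pol(r)=r^2-q(r)$, you use the momentum profile $\varphi(\tau)$, but these are equivalent), derive extremality $\Rightarrow$ polynomial profile, impose the extra Bach-flat relation (the paper gets $q_1q_3=q_0q_4$), and count parameters. Steps~1--2 are fine.

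The gap is in Step~3, where your map from endpoint behaviour to geometry is scrambled. In the paper the trichotomy among complete metrics does \emph{not} all occur at $\tau\to\infty$: the quartic and exponential volume-growth metrics arise when the profile has no further zero and the polynomial $\pol$ has degree~$2$ or~$3$ respectively, but the \emph{finite-volume} complete case comes from a zero of $\pol$ at a finite $b>a$ of multiplicity~$\ge 2$ (the radial integral $\int^b \sqrt{r/\pol(r)}\,dr$ diverges there, so the end is complete, yet the symplectic volume $\sim b^2-a^2$ is finite). Conversely, the incomplete ``other singular behaviour'' is not a degenerate finite zero at all: it is the case where $\pol$ has no zero beyond $a$ but has degree~$4$, so the moment variable runs to infinity yet $\int^\infty \sqrt{r/\pol(r)}\,dr<\infty$. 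So your assignment ``degenerate zeros $\Rightarrow$ other singular behaviour'' and ``$\tau\to\infty$ $\Rightarrow$ complete (including finite volume)'' is exactly backwards in those two slots. Once you fix this taxonomy, the rest of your argument goes through and coincides with the paper's proof of Theorem~\ref{prop:Globalizing}.
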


Our method builds on the Calabi ansatz. It unifies and contains the approaches taken by several authors on recent papers (\cite{Naff,Gau,Gut}). On the other hand, our result intersects the work in \cite{Bryant,Apostolov} concerning Bach-flat K\"ahler metrics which are in addition self-dual, but without symmetry assumptions. 

After proving the classification result above, we study the vanishing locus of the scalar curvature of such metrics since Derdzi\'nski's construction only gives an Einstein metric on the complement of this vanishing locus. The goal is to understand the underlying space where the Einstein metric is defined. 
The case $m=1$ is rather exceptional as the scalar curvature of the Bach-flat K\"ahler metric is nowhere vanishing. We show the following.
\begin{thm}
For every cone angle $\beta$ smaller than $4\pi,$ there is an explicit Einstein metric on $\cp^2 \# \overline{\cp^2}$ with that cone angle along a 2-sphere $\Sigma\subset \cp^2 \# \overline{\cp^2}.$

In addition, as $\beta \to 0$, such metrics converge uniformly with all derivatives to the Taub-bolt metric which is a complete Ricci-flat ALF metric on $(\mathbb{CP}^2 \# \overline{ \mathbb{CP}^2} ) \backslash \Sigma$.
\end{thm}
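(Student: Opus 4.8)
The plan is to combine the explicit classification (Theorem~\ref{prop:Globalizing}), specialized to $m=1$, with Derdzi\'nski's conformal rescaling, and then to analyse the resulting family both globally (to read off the cone angle and its range) and in the degenerate limit (to identify the Taub-bolt metric).

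First I would specialize the momentum-profile description of the Bach-flat K\"ahler metrics to $m=1$, obtaining the explicit $1$-parameter family $g_t$ on the total space of $\mO(-1)\to\cp^1$ in Calabi-ansatz form, in terms of a momentum variable $\tau\in(\tau_-,\tau_+)$, a momentum profile $\Phi_t$, the connection one-form on the circle bundle, and the Fubini--Study metric on the base. The crucial input from the $m=1$ analysis is that the scalar curvature $s_t$, an affine function of $\tau$, is nowhere vanishing on the closed momentum interval. Consequently Derdzi\'nski's theorem applies without deleting any locus, and $\tilde g_t := s_t^{-2}\, g_t$ is a genuine Einstein metric defined on all of $\mO(-1)$; I would record its Einstein constant $\lambda(t)$ and write $\tilde g_t$ explicitly in the same coordinates, checking that the finite nonzero values $s_t(\tau_\pm)$ guarantee $\tilde g_t$ closes up smoothly at the zero section $\tau=\tau_-$.

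Next I would study the geometry of $\tilde g_t$ near the $\cp^1$ at infinity $\Sigma=\{\tau=\tau_+\}$. Since $g_t$ extends across $\Sigma$ to the Hirzebruch surface $\mathbb{H}_1=\cp^2\#\overline{\cp^2}$ with a cone singularity, I would analyse the leading behaviour of $\Phi_t$ and of the conformal factor $s_t^{-2}$ in the geometric radial coordinate $\rho\sim\sqrt{\tau_+-\tau}$ and show that $\tilde g_t$ likewise extends to $\mathbb{H}_1$ with a cone singularity along $\Sigma$. Combining $\Phi_t'(\tau_+)$, the period of the connection one-form, and the boundary behaviour of $s_t^{-2}$ yields an explicit formula for the cone angle $\beta(t)$. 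I would then show that, as $t$ sweeps the family, $\beta(t)$ varies continuously and surjects onto $(0,4\pi)$, with the smooth Page metric sitting at the interior value $\beta=2\pi$. Pinning down the precise endpoints of this range, in particular that the supremum is exactly $4\pi$, is where the detailed structure of the ODE governing $\Phi_t$ enters.

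Finally I would analyse the degenerate limit $\beta\to0$. Here the circle fibres collapse near $\Sigma$, the interval endpoint $\tau_+$ runs off to infinity, and $\lambda(t)\to0$, so a normalization fixing a geometric scale (equivalently, a rescaling making the momentum interval and profile converge) must be chosen before passing to the limit. With this normalization the profiles $\Phi_t$ and the rescaled conformal factors converge, and $\mathbb{H}_1\setminus\Sigma=\mO(-1)$ carries a well-defined limiting metric which I would identify with the Ricci-flat Taub-bolt metric by matching momentum profiles. Smooth convergence on compact subsets then follows from smooth dependence of the ODE solutions on parameters; the main obstacle is upgrading this to convergence \emph{uniform with all derivatives up to the ALF end}, which requires uniform asymptotic control as the cone opens into the cubic-volume-growth ALF end of Taub-bolt. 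This last asymptotic matching, ensuring that no curvature concentrates and that the limit is genuinely the complete Taub-bolt metric rather than a collapsed or rescaled variant, is the technically delicate heart of the argument.
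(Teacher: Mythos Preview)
Your overall architecture matches the paper's: specialize the $m=1$ Bach-flat K\"ahler family, observe the scalar curvature is nowhere vanishing, apply Derdzi\'nski to get Einstein metrics with cone angles along $\Sigma$, determine the range of angles, and analyse the $\beta\to 0$ limit. The paper carries this out via Proposition~\ref{cor:m=1} and Proposition~\ref{prop: Einstein cone angle}, using the two branches $p_\pm(x)$: $p_+$ decreases from $p_+(x_0)$ to $\tfrac12$ as $x\to+\infty$, while $p_-$ increases from $p_-(x_0)=p_+(x_0)$ to $+\infty$ as $x\to 3^-$, so the weight $p$ fills $(\tfrac12,+\infty)$ and hence $\beta=2\pi/p$ fills $(0,4\pi)$.

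Where your proposal goes wrong is the mechanism of the $\beta\to 0$ degeneration. You say ``the interval endpoint $\tau_+$ runs off to infinity'' and that a rescaling is required before passing to the limit. In the paper's parametrisation this is not what happens: $\beta\to 0$ means $p\to+\infty$, which occurs along the $p_-$ branch as $x=b/a\to 3^-$, so the momentum interval $[a,b]$ stays bounded with $b\to 3a$. What actually degenerates is the profile polynomial $\pol$, which acquires a \emph{double} zero at the finite endpoint $r=3a$ (explicitly $\tilde{\pol}(t)=\tfrac18(t+1)(t-1)(t-3)^2$). This puts the limiting K\"ahler metric in case (ii.b) of Theorem~\ref{prop:Globalizing}: it is complete with finite volume and $as(a)=6$. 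Simultaneously $\scal(g)(r)=\tfrac{3}{a}(3-r/a)$ vanishes precisely at $r=3a$, so the conformal factor $\scal(g)^{-2}$ blows up there. The Einstein metric $\scal(g)^{-2}g$ is then directly seen to be complete with cubic volume growth, and the formula $S=12q_4^2q_1+8q_3^3+48q_3q_4$ gives $S=0$, i.e.\ Ricci-flat. No rescaling is needed at any stage; the family of polynomials (hence of metrics) converges smoothly on compact subsets of $\{r<3a\}$ without renormalisation. Your picture of the endpoint escaping to infinity is what happens in the \emph{other} limit, $x\to+\infty$ along $p_+$, which corresponds to $\beta\to 4\pi$ and to the scalar-flat ALE metric on $\mO(-1)$, not to Taub-bolt.

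A secondary point: the ``uniform with all derivatives'' in the statement means smooth convergence on compact subsets of $\bbH_1\setminus\Sigma$, which follows immediately from smooth convergence of the coefficients of $\pol$; the paper does not attempt (and the theorem does not claim) uniform control out to the ALF end, so the ``technically delicate heart'' you anticipate is not actually present. The identification with Taub-bolt is done afterwards, in Appendix~\ref{diagonalizing_metric}, by checking the limiting metric is toric ALF and invoking the Biquard--Gauduchon uniqueness.
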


One of the consequences of our work is that we are able to explicitly connect metrics with very distinct behaviours into families. Similar transitions had been previously predicted and observed in the K\"ahler-Einstein setting (see works of Guenancia \cite{g}, Cheltsov-Rubinstein \cite{cr}, Biquard-Guenancia \cite{bgue}, Rubinstein-Zhang \cite{rz}, Rubinstein-Ji-Zhang \cite{rjz} ). 


For $m\geq 3,$ the Einstein metrics we construct live in non-compact spaces. They are complete and in fact asymptotically hyperbolic or more precisely Poincar\'e-Einstein. Poincar\'e-Einstein metrics have been studied by Fefferman-Graham (see \cite{fg}), Graham-Lee (see \cite{gl}), Anderson (see \cite{an}),  Biquard (see \cite{b}) and more recently  by Gursky-Sz\'ekelyhidi (\cite{gs}) for instance. They appear in the context of the following question.

\begin{question*}
Given a manifold whose boundary is endowed with a fixed conformal structure, is there an Einstein metric on that manifold whose conformal class at the boundary is the given conformal structure?
\end{question*}

In other words: does there exist an Einstein filling of a given conformal structure? There are results establishing local existence and regularity of fillings under certain hypothesis. Our theorem actually gives explicit fillings for certain conformal classes on a Lens space.  Poincar\'e-Einstein metrics such as the one we construct have played an important part in the physics literature. They appear on one side of the AdS/CFT correspondence associating gauge theories on an (anti-De Sitter) manifold $M$ to conformal field theories on the boundary $\partial M$ (see \cite{w}). Page and Pope have written down an explicit  Poincar\'e-Einstein metric on the disk bundle whose conformal class at infinity is that of a Berger metric on $S^3/\bbZ_m.$ We will see that the Page-Pope metric is included in our construction. In fact we show how to ``cap-off" the Page-Pope metric by a Poincar\'e-Einstein metric with a cone angle. This shows that uniqueness of fillings cannot hold among cone angle metrics.
We prove the following result.

\begin{thm}
For every $m\geq 3,$ there are: 
\begin{itemize}
	\item [(a)] two open sets $M_1,M_2 \subset \mathbb{H}_m$ with $\partial \overline{M_1} =  \partial \overline{M_2}$ and $\mathbb{H}_m = M_1 \cup M_2$; 
	\item [(b)] two pairs $\lbrace (g_{M_1})^{\pm}_t \rbrace_{t \in I \subset \mathbb{R}}$ and $\lbrace (g_{M_2})^{\pm}_t \rbrace_{t \in I \subset \mathbb{R}}$ of $1$-parameter families of Poincar\'e-Einstein metrics on $M_1$ and $M_2$ respectively, sharing the same conformal infinity 
	$$[(g_{M_1})^{\pm}_t |_{\partial M_1}] = [(g_{M_2})^{\pm}_t |_{\partial M_2} ],$$ 
	for each $\pm \in \lbrace + , - \rbrace$ and $t \in I \subset \mathbb{R}$.
\end{itemize} 
One of the families consists of smooth metrics while the other consists of metrics with a cone angle along a surface with self-intersection $m$ (the cone angle depends on $\pm$ and $t$). 
\end{thm}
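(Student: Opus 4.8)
The plan is to extract everything from the explicit Bach-flat K\"ahler metrics of the classification (Theorem \ref{prop:Globalizing}) and run them through Derdzi\'nski's conformal construction \cite{d}. In the Calabi ansatz underlying that classification, such a metric $g$ on $\mathbb{H}_m$ is encoded by a momentum profile in a $U(2)$-invariant momentum coordinate $\tau$ ranging over a closed interval, whose two endpoints correspond to the zero section $E_-$ (self-intersection $-m$) and the divisor at infinity $\Sigma_\infty$ (self-intersection $+m$). Since $g$ is $U(2)$-invariant its scalar curvature $s$ is a function of $\tau$ alone, and Derdzi\'nski's theorem guarantees that $\tilde g = s^{-2} g$ is Einstein wherever $s \neq 0$. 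The entire statement therefore reduces to understanding the single function $s(\tau)$ together with the behaviour of $s^{-2} g$ at the endpoints of the $\tau$-interval and across the level set $\{s = 0\}$.

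First I would compute $s(\tau)$ from the momentum profile and show that exactly when $m \geq 3$ it has a simple zero at an interior regular value $\tau_*$, so that $N := \{s = 0\}$ is a single $U(2)$-orbit, diffeomorphic to the lens space $S^3/\bbZ_m$, which separates $\mathbb{H}_m$ into two connected pieces. Taking $M_1$ to be the component containing $E_-$ and $M_2$ the one containing $\Sigma_\infty$ gives $\mathbb{H}_m = M_1 \cup M_2$ with $\partial \overline{M_1} = \partial \overline{M_2} = N$, which is part (a). The parameter in the family comes from the free constant in the classification: I expect the subset of parameters for which $s$ has an interior regular zero to break into two intervals, which I label by the sign $\pm$ and reparametrize by $t \in I$; varying $t$ slides $\tau_*$ and changes the squashing of the induced metric on $N$.

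For part (b) I would establish the Poincar\'e-Einstein structure near $N$. Because $\tau_*$ is a simple zero, $s$ vanishes to first order there, so $s^{-2}$ blows up like the inverse square of a boundary-defining function; substituting into the Calabi-ansatz expression for $g$ I would verify that $\tilde g$ is conformally compact with a smooth conformal infinity, asymptotically hyperbolic of Poincar\'e type, and that its conformal infinity is the conformal class of $g|_N$, a Berger metric on $S^3/\bbZ_m$. The decisive point --- and the real content of the theorem --- is that this boundary conformal class is read off as a two-sided limit of one and the same K\"ahler metric $g$, so that $(g_{M_1})^{\pm}_t = s^{-2} g|_{M_1}$ and $(g_{M_2})^{\pm}_t = s^{-2} g|_{M_2}$ automatically share the same conformal infinity for every $\pm$ and $t$; this is precisely the failure of uniqueness of Einstein fillings.

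Finally I would analyse the interior endpoints. On $M_1$, near $E_-$, the metric $g$ is smooth and $s$ is smooth and nonvanishing, so the conformal factor is harmless and $\tilde g$ closes up smoothly; hence the $M_1$-families are smooth Poincar\'e-Einstein metrics. On $M_2$, near $\Sigma_\infty$, I would compare the period of the $U(1)$-angle forced by smoothness of $g$ with the period compatible with $s^{-2} g$; their ratio is a cone angle along $\Sigma_\infty$ (the surface of self-intersection $+m$) depending on $\pm$ and $t$, which yields the smooth-versus-cone dichotomy. The step I expect to be the \emph{main obstacle} is the asymptotic analysis above: showing that $s^{-2} g$ is genuinely conformally compact with an even, smooth boundary expansion (rather than merely complete with hyperbolic volume growth) and pinning down the boundary Berger class precisely enough to see the two fillings coincide. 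Once $s(\tau)$ and the near-$N$ expansion are in hand, the separation into $M_1, M_2$, the smoothness on $M_1$, and the cone-angle computation on $M_2$ are comparatively routine.
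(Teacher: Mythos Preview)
Your overall architecture matches the paper's: take the $U(2)$-invariant Bach-flat K\"ahler metrics $g$ on $\mathbb{H}_m$ from the classification, observe that for $m\geq 3$ the scalar curvature $s$ vanishes along an interior hypersurface $N\cong S^3/\mathbb{Z}_m$, and apply Derdzi\'nski on each side to obtain two Poincar\'e--Einstein metrics with common conformal infinity. The ``main obstacle'' you anticipate (conformal compactness at $N$) is in fact not one: since $g$ is smooth across $N$ and $s=2q_3+q_4r$ is affine with a simple zero there, $\pm s$ is already a boundary-defining function and $s^{-2}g$ is conformally compact by definition, with conformal infinity $[g|_N]$; no delicate boundary expansion is needed.

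There is, however, a genuine gap in your treatment of the $M_2$ side. The cone angle along $\Sigma_\infty$ does \emph{not} arise from the conformal factor: near $\Sigma_\infty$ the function $s$ is smooth and nonvanishing (it vanishes only at $N$), so multiplication by $s^{-2}$ cannot create or alter a cone singularity. The cone angle is already present in $g$ itself. The point---established in the paper's analysis of $\mathbb{H}_m$ for $m\geq 3$---is that the Bach-flat condition on a $U(2)$-invariant extremal K\"ahler metric is a quadratic equation $Ap^2+Bp+C=0$ in the weight $p$ at $\Sigma_\infty$, with two roots $p_\pm(x)$ for each value of the K\"ahler-class parameter $x=b/a$; neither root is generically $1$, so the Bach-flat K\"ahler metrics one must start from are themselves cone-angle metrics along $\Sigma_\infty$, and the Einstein metric on $M_2$ simply inherits that cone angle. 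This also corrects your reading of the symbol $\pm$: it does not label two parameter intervals but the two branches $p_\pm$ of the Bach-flat quadratic, while $t$ is essentially $x$. Your proposed mechanism (comparing $U(1)$-periods before and after the conformal rescaling) would yield $1$ and hence no cone angle, so as written the argument cannot produce the stated dichotomy.
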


The boundary of $M_1$ and $M_2$ is diffeomorphic to $S^3/\bbZ_m,$ and the family of smooth metrics coincides with those found by Page--Pope \cite{pp}. 

Finally, we also obtain a family of $U(2)$-invariant, Poincar\'e-Einstein, conformally K\"ahler metrics on a disk bundle inside the total space of $\mO(-m)$ which do not arise from Derdzi\'nski's theorem but rather are conformal to scalar-flat K\"ahler metrics. We obtain these metrics by taking limits of appropriate rescalings of a degenerating family of our Einstein metrics. We prove the following result in Section \ref{sec:Not Derdzinsky}. 

\begin{thm}\label{thm:Not Derdzinsky}
	For every $m\geq 3$, there is a Poincar\'e-Einstein metric on a disk bundle in $\mO(-m)$ which is not K\"ahler, but conformal to a scalar-flat K\"ahler metric. 
\end{thm}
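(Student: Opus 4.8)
The plan is to construct these metrics as limits of the Poincar\'e--Einstein metrics already produced for $m \geq 3$ by the earlier classification. Recall that Derdzi\'nski's theorem yields an Einstein metric $g^E = f^{-2} g$ on the complement of the zero locus of the scalar curvature $\scal$ of the Bach-flat K\"ahler metric $g$, where $f$ is a suitable constant multiple of $\scal$. The key observation is that $g^E$ cannot be extended across $\{\scal = 0\}$ as a smooth Einstein metric precisely because the conformal factor $f^{-2}$ blows up there, yet the underlying K\"ahler metric $g$ remains perfectly good. My first step would be to identify, within the $1$-parameter family of Bach-flat K\"ahler metrics on $\mO(-m)$, a degenerating subfamily for which the scalar-flat locus migrates toward a fixed disk bundle boundary, so that on a fixed disk bundle $D \subset \mO(-m)$ the scalar curvature of $g$ tends to a function that is nowhere zero on $D$ but whose Derdzi\'nski conformal factor develops the right asymptotic behaviour at $\partial D$ to produce a complete, asymptotically hyperbolic end.

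Second, I would carry out the rescaling carefully. Working with the explicit Calabi-ansatz form of these metrics, I would write everything in terms of the momentum variable and the fibre-direction profile function, reduce the Einstein and the scalar-flat K\"ahler ODEs to their explicit integrated forms (which the classification theorem supplies), and then introduce a rescaling of the form $\tilde g_\epsilon = \lambda(\epsilon)^{-2}\, (g^E)_{t(\epsilon)}$ together with a rescaling of the base coordinate, chosen so that the blow-up of the conformal factor at the scalar-flat locus is absorbed. Taking $\epsilon \to 0$, I expect the limit of $(g^E)_{t(\epsilon)}$ to converge, smoothly on compact subsets of the disk bundle, to a metric $g_\infty$ which is again conformal to one of the K\"ahler metrics in the family but now with the special feature that the limiting K\"ahler metric is \emph{scalar-flat}. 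The point is that in the limit the factor $f = c\,\scal$ degenerates: $\scal \to 0$ as a function while the conformally rescaled object stays finite, so the limit metric is conformal to a scalar-flat K\"ahler metric and hence sits outside the range of Derdzi\'nski's construction.

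Third, I would verify the three claimed properties of $g_\infty$. That it is Einstein should follow from closedness of the Einstein condition under the smooth-on-compacts convergence, provided I control the curvature uniformly; alternatively, and more robustly, I would check the Einstein ODE directly on the explicit limiting profile, since the integrated ODEs pass to the limit. That it is Poincar\'e--Einstein (asymptotically hyperbolic of the sharp order) requires analysing the behaviour of $g_\infty$ near $\partial D$: I would expand the profile function near the boundary of the momentum interval and confirm the characteristic $-1$ curvature asymptotics and the existence of a well-defined conformal infinity. That it is \emph{not} K\"ahler but conformal to a scalar-flat K\"ahler metric is the conceptually new point: I would exhibit the explicit scalar-flat K\"ahler representative $g_\infty^K = \phi^2 g_\infty$ in its K\"ahler-potential or Calabi-ansatz form, check $\scal(g_\infty^K)=0$ from the integrated ODE, and rule out that $g_\infty$ itself is K\"ahler by comparing the two distinct symplectic/complex structures or by a curvature obstruction (e.g.\ noting that a $U(2)$-invariant K\"ahler Einstein metric in this family would force a specific sign of $\scal$, contradicting scalar-flatness of the conformal partner).

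The main obstacle I anticipate is the rescaling analysis: choosing the correct joint scaling of the deformation parameter $t$, the conformal factor $\lambda(\epsilon)$, and the base/momentum coordinate so that a genuinely nondegenerate limit survives, rather than a collapsed or blown-up object, and so that completeness and the asymptotically hyperbolic order are preserved in the limit. Because the conformal factor $f^{-2}$ is diverging exactly where the end is forming, the competition between the degeneration of $\scal$ and the rescaling must be balanced to leading \emph{and} subleading order; getting the subleading term right is what guarantees the Poincar\'e-type (rather than merely asymptotically hyperbolic) decay. Once the correct scaling is pinned down, the remaining verifications reduce to manipulations of the explicit integrated ODEs supplied by the classification theorem, which I expect to be routine.
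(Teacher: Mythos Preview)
Your strategy is essentially the same as the paper's: take the Derdzi\'nski Einstein metrics $(\scal(g_{(a,y)}))^{-2} g_{(a,y)}$, rescale to absorb the degeneration as the parameter $y=as(a)$ tends to $0$, and identify the limit as an Einstein metric conformal to the scalar-flat K\"ahler metric $g_{(a,0)}$. So the core idea is right.

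Where you diverge from the paper is in anticipating far more difficulty than is actually present. The ``main obstacle'' you describe --- a joint rescaling of the deformation parameter, the conformal factor, \emph{and} the base/momentum coordinate, balanced to leading and subleading order --- does not arise. The reason is Lemma~\ref{lem:Vanishinh of scalar curvature}: the scalar curvature already factors as
\[
\scal(g_{(a,y)})(r)=\frac{s(a)}{4m}\Big( y+8(m-1)-\big(y+4(m-2)\big)\tfrac{r}{a}\Big),
\]
so $a y^{-1}\scal(g_{(a,y)})$ converges as $y\to 0$ to the explicit nonzero function $\frac{1}{4m}\big(8(m-1)-4(m-2)\tfrac{r}{a}\big)$. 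Thus the only rescaling needed is the \emph{constant} $\lambda=ay^{-1}$ on the conformal factor (equivalently, multiplying the Einstein metric by the constant $(y/a)^2$); no coordinate rescaling, no subleading matching. The limiting metric is then written down in closed form, it is Einstein as a smooth limit of Einstein metrics, and for $m\ge 3$ the conformal factor blows up exactly at $r/a=\tfrac{2(m-1)}{m-2}>1$, giving the Poincar\'e end immediately. The scalar curvature computation $S=-\tfrac{12}{a}(m-2)$ is a two-line application of Lemma~\ref{lemma:Laplacian gamma}. In short, your plan works, but the explicit factorisation of $\scal$ collapses steps two and three of your outline to a single elementary limit.
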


We point out that our all Einstein metrics are conformally K\"ahler therefore Hermitian. In fact, in dimension $4,$ all Hermitian Einstein metrics are conformally K\"ahler.

The paper is organised as follows. In section \ref{sec:prelim} we give some required background on the toric geometry of $\mO(-m) \rightarrow \cp^1$ and of Hirzebruch surfaces $\bbH_m$. This section also contains a brief discussion on extremal metrics leading to Calabi's ansatz for extremal metrics with a $U(2)$-symmetry and a discussion of Derdzi\'nski's theorem. In section \ref{sec:bachflat} we focus on those extremal metrics arising from the Calabi's ansatz which are Bach-flat and we derive a criterium for Bach-flatness in this setting. In section \ref{sec:local} we use the results from section \ref{sec:bachflat} to give a very general local construction of $U(2)$-invariant Bach-flat K\"ahler metrics on disk bundles in $\mO(-m)\rightarrow \cp^1$ in terms of two parameters, namely the volume of the zero section and the scalar curvature at the zero section. In section \ref{sec:localglobal} we study the maximal space where the constructed metrics are defined and their asymptotics. As it turns out, certain parameter values determine metrics on the total space of $\mO(-m)$ with quartic or exponential volume growth, while other values determine incomplete metrics on the same space. Others values give metrics whose underlying space is a Hirzebruch surface but the metrics generally have a cone angle singularity along the divisor at infinity, which is a $\cp^1.$ In section \ref{sec:hm}, we focus on the case of $\bbH_m$ and we study the possible cone angles arising in our construction as well the vanishing locus for the scalar curvature of our metrics with an eye on Derdzi\'nki's theorem. As it turns out the case of $\bbH_1$ is special and we get metrics whose scalar curvature does not vanish anywhere. The case of $\bbH_2$ is also special in that no Bach-flat metrics exist. In section \ref{sec:cke} we are finally able to apply Derdzi\'nski's construction to get 
\begin{enumerate}
\item Cone-angle Einstein metrics on $\bbH_1.$ We study the possible cone angles we get and confront our results with the Hitchin-Thorpe inequality which was generalised to this setting by Atiyah-Lebrun (see \cite{al}).
\item Poincar\'e-Einstein metrics on the total space of the disk bundle in $\mO(-m)$ which are fillings of lens spaces. Some of these can be capped off by cone angle metrics while other can be capped off by incomplete metrics. The smooth Poincar\'e-Einstein fillings were previously written down by Page and Pope (see \cite{pp}).
\end{enumerate}
In the last section, we write down and study a family of Poincar\'e-Einstein metrics conformal to scalar-flat K\"ahler metrics. 

In Appendix \ref{appendix:convex}  we give a complete proof of a technical nature showing that the symplectic potentials we construct are indeed convex. This is essential for our metrics to be well defined but it is a rather cumbersome calculation.  In Appendix \ref{appendix:mc} we give a classification of the metrics we construct in terms of the parameters values on which they depend. We finish our paper with Appendix \ref{d} where we give more details on Derdzi\'nski's theorem and some proofs for the results we use from \cite{d}.\\

\noindent \textbf{Acknowledgments:} We would like to thank Claude LeBrun for useful conversations and for his interest in this work. We are also grateful to Olivier Biquard for clarifying some facts concerning the Chen-Teo instanton.

\section{Preliminaries}\label{sec:prelim}
\subsection{The total space of $\mO(-m)$} Projective space $\CP^1$ can be thought of as the space of complex lines in $\bbC^2.$ 
$$
\CP^1=\{[z_0:z_1]: (z_0,z_1)\ne (0,0)\},
$$
where for every non-zero vector in $\bbC^2$ $(z_0,z_1),$ $[z_0:z_1]$ denotes the line through $(z_0,z_1)$ minus the zero vector. We see that 
$$
\CP^1=(\bbC^2\setminus \{(0,0)\})/\bbC^*\simeq S^3/S^1
$$
and  $\CP^1$ admits an $S^1$-action. In fact this action preserves the Fubini-study metric and is toric. Its moment map image is an interval in $\bbR$ which can be identified with the dual of the Lie algebra of $S^1.$ Consider the tautological bundle over $\CP^1$ $\mO(-1)$ i.e.
$$
\{([z_0:z_1],(w_0,w_1)):(w_0,w_1) \in [z_0:z_1]\}\rightarrow \CP^1
$$
where the map sends $([z_0:z_1],(w_0,w_1))$ to $[z_0:z_1].$ The total space of this bundle admits an action of $S^1\times S^1=\bbT^2.$ It follows that all powers of this bundle also admit an $S^1\times S^1$-action. Now for each $m$ the total space of $\mO(-m)$ is a symplectic manifold. A way to see this is simply to see the total space of $\mO(-m)$ as a closed subvariety of $\CP^1\times \bbC^2.$ The $\bbT^2=S^1\times S^1$ action preserves the symplectic form and is actually Hamiltonian for all integers $m$ so that the total space of $\mO(-m)$ is non-compact and toric. The image of the moment map of the $\bbT^2$-action is a non-compact convex polytope $P$
whose interior normals are 
$$
(0,1),(1,0), (m,-1).
$$
This polytope is of Delzant type, in particular the determinant of interior normals corresponding to adjacent facet in indirect order is $-1.$ Over the pre-image of the interior of the polytope, the $\bbT^2$-action is free and that pre-image can be identified with the interior of the polytope times $\bbT^2.$ This identification yields coordinates which are called action-angle coordinates.

There is a linear transformation $T_m$ taking this polytope to the polytope whose normals are  
$$
(0,1),(1/m,1/m), (0,1).
$$
This polytope is not Delzant but it will prove useful in what we do next. We denote it by $P_\mO.$
$$
P_\mO=\{(x_1,x_2)\in \bbR^2: x_1,x_2\geq 0, \, a\leq x_1+x_2\}.
$$
Over the pre-image of the interior of the moment polytope via the moment map, $(x_1,x_2,\theta_1,\theta_2),$ where $(\theta_1,\theta_2)$ are angle coordinates on $\bbT^2,$ are coordinates on our manifold. 

\subsection{Hirzebruch surfaces} Let $\bbH_m$ denote the $m$-th Hirzebruch surface, that is $\bbH_m=\mathbb{P}(\mathcal{O}(-m)\oplus \mathcal{O})\rightarrow \cp^1$ the total space of the projectivized bundle $\mathcal{O}(-m)\oplus \mathcal{O}\rightarrow \cp^1.$ This admits a $\bbT^2=S^1\times S^1$-action. When we choose a compatible symplectic form on $\bbH_m$ this action becomes toric i.e. it admits a moment map determined by the action and the symplectic form. The image of the moment map of each symplectic Hirzebruch surface has 4 facets and the corresponding interior normal are
$$
(0,1),(1,0),(m,-1),(-1,0).
$$
Again this polytope is of Delzant type and the pre-image via the moment map of the interior of this polytope is the open dense set in $\bbH_m$ where the $\bbT^2$-action is free. We again get action-angle coordinates on that set. There is an element in $GL(2,\bbR)$ inducing a linear transformation $T_m$ taking this to the polytope whose normals are 
$$
(0,1),(1/m,1/m),(0,1),(-1/m,1/m),
$$
whose matrix representation is
$$
\begin{pmatrix}
m&-1\\
0&1\\
\end{pmatrix}. 
$$
We denote this open polytope by $P_H.$ 
$$
P_H=\{(x_1,x_2)\in \bbR^2: x_1,x_2\geq 0, \, a\leq x_1+x_2 \leq b\}.
$$
It is not of Delzant type (as the transformation is not actually in $SL(2,\bbZ)$) but it will be convenient for the calculations. The constants $a$ and $b$ reflect the choice of a cohomology class on the Hirzebruch surface to which the symplectic form belongs i.e. a polarisation. The transformation $T_m$ composed with the moment map for the $\bbT^2$-action on $\bbH_m$ fibres over the interior of the moment polytope as a $\bbT^2$ fibration. Over the pre-image of the interior of the moment polytope, $(x_1,x_2,\theta_1,\theta_2)$ where $(\theta_1,\theta_2)$ are angle coordinates on $\bbT^2$ are coordinates on our manifold. The pre-image of each facet of the moment polytope is a $\cp^1.$ For each $\alpha$, we will denote the pre-image of $$\{(x_1,x_2): x_1>0, x_2>0, \alpha=x_1+x_2\}$$ via $T_m$ composed with the moment map by $S_\alpha.$ When $\alpha=b$ this pre-image is actually a $\CP^1$ and we sometimes refer to it as the divisor at infinity.

This description for Hirzebruch surfaces was already considered in \cite{acoho}.
\subsection{Symplectic potential} We are going to be interested in considering metrics in the total space of $\mO(-m)$ and on $\bbH_m$ which are compatible with the symplectic structure on these manifolds i.e. K\"ahler metrics for which the $\bbT^2$-action is biholomorphic and Hamiltonian. Such metrics are called toric K\"ahler metrics and have been studied in greater generality. We refer the reader to \cite{a0} for background. 

For our purposes it is enough to know that toric K\"ahler metrics on the total space of $\mO(-m)$ and on $\bbH_m$ are parametrised by certain convex functions on their moment polytopes with prescribed singular behaviour on the boundary. For an explanation of this fact see \cite{a0} and \cite{aorb}.

Using the action-angle setup and the transformation $T_m$, any toric K\"ahler metric on  total space of $\mO(-m)$ or on $\bbH_m$ is determined by a function $u$ which 
\begin{itemize}
\item \label{cond_convex} is convex either on the interior of $P_\mO$ or of $P_H$, i.e. $\Hess(u)$ is positive definite there,
\item in the case of the total space of $\mO(-m),$
\[
\scalebox{.7}{$ u-\frac{1}{2}\left(x_1\log(x_1)+x_2\log(x_2)+\frac{x_1+x_2-a}{m}\log(x_1+x_2-a) \right)$}
\]
is smooth and satisfies
$$
\det(\Hess(u))=\frac{\delta}{x_1x_2(x_1+x_2-a)},
$$
for a positive, smooth function $\delta$ on $P_\mO$
\item in the case of $\bbH_m,$
\[
\scalebox{.7}{$ u-\frac{1}{2}\left(x_1\log(x_1)+x_2\log(x_2)+\frac{x_1+x_2-a}{m}\log(x_1+x_2-a)+\frac{b-(x_1+x_2)}{m}\log(b-(x_1+x_2))\right) $}
\]
is smooth and satisfies
$$
\det(\Hess(u))=\frac{\delta}{x_1x_2(x_1+x_2-a)(b-(x_1+x_2))},
$$
for a positive, smooth function $\delta$ on $P_H$
\end{itemize}

Conversely, in both cases all such functions $u$ satisfying the described conditions determine a metric which we denote by $g_u$
\begin{equation}\label{g_u}
g_u=\sum_{i,j=1}^2 \left( u_{ij} dx_i \otimes dx_j + u^{ij}d\theta_i\otimes d\theta_j \right)
\end{equation}
where $u_{ij}$ denote the entries of $\Hess(u)$, $u^{ij}$ those of its inverse, and $(\theta_1,\theta_2)$ are the fibre coordinates on the surfaces. 

We will consider metrics on the Hirzebruch surface with a cone angle along the $\cp^1$ whose image via $T_m$ composed with the moment map  is $S_b.$ The boundary behaviour of the corresponding symplectic potential is determined by the fact that 
$$
\scalebox{.9}{$u-\frac{1}{2}\left(x_1\log(x_1)+x_2\log(x_2)+\frac{x_1+x_2-a}{m}\log(x_1+x_2-a)+\frac{p(b-(x_1+x_2))}{m}\log(b-(x_1+x_2	))\right)$},
$$
is smooth for a positive number $p$ given by $2\pi$ divided by the cone angle. We refer to $p$ as a weight. 

\subsection{Abreu's formula and Calabi's ansatz for extremal metrics}
Abreu showed that the scalar curvature of $g_u$ can be computed in terms of $u:$
$$
\scal(g_u)=-\left( \frac{\partial^2 u^{11}}{\partial x_1^2}+2\frac{\partial^2 u^{12}}{\partial x_1\partial x_2}+\frac{\partial^2 u^{22}}{\partial x_2^2}\right).
$$
This function depends only on $x_1$ and $x_2$ as the metric is torus invariant. The formula above is known as Abreu's formula.\\

In K\"ahler geometry a central role is played by extremal metrics. These were introduced by Calabi as critical points of the Calabi functional on a K\"ahler class. More specifically given a fixed K\"ahler metric $\omega_0$ on a K\"ahler manifold $M,$ its K\"ahler class, which is determined by the cohomology of $\omega_0$ is given by
$$
\{\omega_0+i\partial\bar{\partial}\varphi: \quad \omega_0+i\partial\bar{\partial}\varphi>0\}.
$$
The Calabi functional on the other hand is defined by
$$
\text{Cal}(\omega)=\int_M\left|\scal(g_\omega)\right|^2,
$$
where $g_\omega$ is the Riemannian metric associated with the K\"ahler form $\omega$ and the fixed complex structure on $M,$ and $\scal$ denotes the scalar curvature. Constant scalar curvature K\"ahler metrics are always extremal in the sense of Calabi but there extremal metrics which have non-constant scalar curvature.

As it turns out, a toric K\"ahler metric $g_u$ is extremal in the sense of Calabi iff $\scal(g_u)$ is an affine function of $(x_1,x_2).$ See \cite{a0} for more details.

Calabi also developed an ansatz he was able to use to construct several examples of extremal metrics. Calabi's ansatz yields extremal metrics on the total space of some special bundles over K\"ahler-Einstein manifolds. We are going to use it for the bundles $\mO(-m)\rightarrow \cp^1$ when $m>0$.  Both the total space of $\mO(-m)$ and all Hirzebruch surfaces admit a $U(2)$-action and Calabi's metrics are $U(2)$-invariant.  Calabi famously applied his ansatz to explicitly construct extremal non-constant scalar curvature examples of metrics on each $\bbH_m$. His methods were translated into the language of action-angle coordinates by Raza \cite{r} and Abreu \cite{acoho}. In particular Abreu explained that the polytopes $P_\mO$ and $P_H$ are well suited to describe any $U(2)$-invariant metric. The symplectic potential of such a $U(2)$-invariant K\"ahler metric can be written as
\begin{equation}\label{eq:symplectic potential}
	u_h=\frac{1}{2}\left(x_1\log(x_1)+x_2\log(x_2)+h(x_1+x_2)\right),
\end{equation}
where 
\begin{enumerate}
\item \label{smoothness_in_h_Om} In the case of $\mO(-m)$ the function $h$ is smooth in $]a,+\infty[$ and is such that
$$
h''(r)-\frac{1}{m(r-a)}
$$
is smooth on $[a,+\infty[,$ and $(x_1,x_2)$ take values in $P_\mO.$ 

\item \label{smoothness_in_h_Hm} In the case of $\bbH_m, $ the function $h$ is smooth in $]a,b[$ and is such that
$$
h''(r)-\frac{1}{m}\left(\frac{1}{r-a}+\frac{p}{b-r}\right)
$$
is smooth on $[a,b],$ and $(x_1,x_2)$ take values in $P_H.$
\end{enumerate}

Before we explain Calabi's ansatz we start by noticing that the above boundary conditions ensure that $u_h,$ as defined by equation (\ref{eq:symplectic potential}), satisfies the right boundary conditions to be a symplectic potential. 
\begin{lemma}\label{lemma:smooth}
	Let $h$ be a function satisfying either conditions in (\ref{smoothness_in_h_Om}) or  (\ref{smoothness_in_h_Hm}) above. Let $u=u_h$ be determined by $h$ via equation (\ref{eq:symplectic potential}). Then either
	\begin{itemize}
	\item In the case of $\mO(-m)$, 
	$$
        x_1x_2(x_1+x_2-a)\det(\Hess(u)),
        $$
        is positive and smooth on $P_\mO.$
        \item In the case of $\bbH_m$, 
	$$
        x_1x_2(x_1+x_2-a)(b-(x_1+x_2))\det(\Hess(u)),
        $$
        is positive and smooth on $P_H.$
        \end{itemize}
\end{lemma}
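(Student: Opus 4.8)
The plan is to reduce the statement to a single explicit computation of $\Hess(u_h)$, exploiting that $u_h$ depends on the base coordinates only through $x_1,x_2$ separately (the logarithmic terms) and through $r:=x_1+x_2$ (the term $h$). First I would compute the second derivatives: writing $'=\tfrac{d}{dr}$, one finds $\partial_{x_1}^2 u_h=\tfrac12\bigl(x_1^{-1}+h''(r)\bigr)$, $\partial_{x_2}^2 u_h=\tfrac12\bigl(x_2^{-1}+h''(r)\bigr)$ and $\partial_{x_1}\partial_{x_2}u_h=\tfrac12 h''(r)$, so that
\[
\Hess(u_h)=\frac12\begin{pmatrix} x_1^{-1}+h'' & h'' \\ h'' & x_2^{-1}+h''\end{pmatrix}.
\]
Expanding the $2\times2$ determinant, the $(h'')^2$ terms cancel and one is left with the clean identity
\[
\det\Hess(u_h)=\frac{1}{4x_1x_2}\bigl(1+r\,h''(r)\bigr),
\]
valid on the whole interior. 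This is the key simplification, and everything else follows by multiplying through by the boundary defining functions.

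For the $\mO(-m)$ case I would then multiply by $x_1x_2(r-a)$ to obtain
\[
x_1x_2(r-a)\det\Hess(u_h)=\tfrac14(r-a)\bigl(1+r\,h''(r)\bigr),
\]
and smoothness is now the heart of the matter. Using condition~(\ref{smoothness_in_h_Om}), write $h''(r)=\tfrac{1}{m(r-a)}+s(r)$ with $s$ smooth on $[a,+\infty)$; substituting gives
\[
\tfrac14(r-a)\bigl(1+r\,h''(r)\bigr)=\tfrac14\Bigl((r-a)+\tfrac{r}{m}+r(r-a)\,s(r)\Bigr),
\]
in which the apparent singularity at $r=a$ has disappeared, leaving a smooth function of $r$, hence (composing with the smooth map $(x_1,x_2)\mapsto x_1+x_2$) a function that is smooth on $P_\mO$ up to its boundary. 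The Hirzebruch case is identical in structure: multiplying instead by $x_1x_2(r-a)(b-r)$ and inserting $h''(r)=\tfrac{1}{m(r-a)}+\tfrac{p}{m(b-r)}+s(r)$ from condition~(\ref{smoothness_in_h_Hm}), both the $r=a$ and the $r=b$ singularities cancel, leaving a smooth function of $r$ on $[a,b]$.

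Finally, for positivity I would argue as follows. On the interior $x_1,x_2>0$ and $r-a>0$, so the sign of the expression equals that of $\det\Hess(u_h)$; positivity there is therefore exactly the statement that $\Hess(u_h)$ is positive definite, i.e. the convexity of $u_h$ that makes $g_{u_h}$ a genuine metric (the convexity of the potentials we actually construct is verified in Appendix~\ref{appendix:convex}). On the boundary, convexity degenerates but the product does not: evaluating the smooth extension above at $r=a$ gives $\tfrac{a}{4m}>0$, while in the Hirzebruch case the values at $r=a$ and $r=b$ are $\tfrac{a(b-a)}{4m}$ and $\tfrac{p\,b(b-a)}{4m}$, all positive since $0<a<b$ and $p>0$. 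I expect the only genuine subtlety—the main obstacle—to be the bookkeeping of the cancellation of the boundary singularities and checking that no lower-order singular term survives; once the closed-form determinant is in hand this is a short, transparent check, and the very same computation simultaneously delivers the smoothness and the positive boundary values.
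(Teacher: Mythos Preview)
Your proof is correct and follows essentially the same approach as the paper's: compute $\Hess(u_h)$, observe that its determinant collapses to $\tfrac{1}{4x_1x_2}(1+r h''(r))$, and multiply through by the boundary-defining factors. Your treatment is in fact more explicit than the paper's, which simply asserts that the conditions on $h''$ make the resulting function smooth and positive; you spell out the cancellation of the $(r-a)^{-1}$ and $(b-r)^{-1}$ poles and compute the boundary values directly, which is a nice touch.
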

\begin{proof}
We have $$
	\Hess(u)=
	\frac{1}{2}\begin{pmatrix}
		\frac{1}{x_1} + h''&  h''\\
		h''&\frac{1}{x_2}+ h''\\
	\end{pmatrix},
	$$
	so that 
	$$
\det(\Hess(u))=\frac{1+rh''}{x_1x_2}.
$$
When either condition (\ref{smoothness_in_h_Om}) or (\ref{smoothness_in_h_Hm}) are satisfied this ensures that either
$$
{x_1x_2(x_1+x_2-a)}\det(\Hess(u))=(r-a)(1+rh''),
$$
is positive and smooth on the closure of $P_\mO$ or
$$
{x_1x_2(x_1+x_2-a)(b-(x_1+x_2))}\det(\Hess(u))=(r-a)(b-r)(1+rh''),
$$
is a positive and smooth on the closure of $P_H.$
\end{proof}

We quickly review Calabi's method in the toric framework. We have said that a toric metric $g_u$ is extremal if and only if $\scal(g_{u})$ is an affine function of $(x_1,x_2).$ Now taking into account that $u=u_h$ is given by equation \ref{eq:symplectic potential} we can apply Abreu's formula for the scalar curvature to deduce that $h$ must be of a very special form in order for $g_{u_h}$ to be extremal.

\begin{thm}[Calabi's ansatz]\label{prop:scalar curvature and extremal metrics}\indent
	Let $h$ be a function satisfying either conditions in (\ref{smoothness_in_h_Om}) or  (\ref{smoothness_in_h_Hm}) above. Let $u=u_h$ be determined by $h$ via equation (\ref{eq:symplectic potential}). Assume that $\Hess(u_h)$ is positive definite. Let also $q$ be such that
		$$h''(r)= \frac{q(r)}{r(r^2-q(r))}.$$  The scalar curvature of $g_{u_h}$, the toric K\"ahler metric induced by the symplectic potential $u_h$, is given by
	$$\scal(g_{u_h})=\frac{2q''(r)}{r},$$ wherever $g_{u_h}$ is defined.
	
	In particular, $g_{u_h}$ is extremal if and only if $q$ is a quartic polynomial of the form $q(r)=\sum_{i=1}^4 \frac{q_i}{i!} r^i$ with $q_2=0$, in which case the scalar curvature is given by
	$$\scal(r)=2q_3 + q_4 r.$$
\end{thm}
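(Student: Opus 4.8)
The plan is to compute $\scal(g_{u_h})$ directly from Abreu's formula, using the very explicit structure of the symplectic potential \eqref{eq:symplectic potential}. The key simplification is that $u_h$ depends on $(x_1,x_2)$ only through the combination $r=x_1+x_2$ (up to the fixed terms $\tfrac12 x_i\log x_i$), so all the second derivatives of $u^{ij}$ appearing in Abreu's formula will collapse to derivatives of a single function of $r$. First I would record the Hessian computed in Lemma \ref{lemma:smooth}, namely
$$
\Hess(u_h)=\frac{1}{2}\begin{pmatrix}\frac{1}{x_1}+h'' & h''\\ h'' & \frac{1}{x_2}+h''\end{pmatrix},\qquad \det(\Hess(u_h))=\frac{1+rh''}{x_1x_2},
$$
and then invert it explicitly. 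The inverse entries $u^{ij}$ will be rational expressions in $x_1,x_2$ and $h''(r)$; the factor $1+rh''$ in the denominator is where the substitution $h''=q/(r(r^2-q))$ earns its keep, since $1+rh''=r^2/(r^2-q)$, which is what makes the whole computation rational and tractable.

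\smallskip

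\noindent\textbf{Next} I would substitute the prescribed form of $h''$ to express every $u^{ij}$ as a function of $x_1$, $x_2$ and $q(r)$, and then feed these into Abreu's formula
$$
\scal(g_{u_h})=-\left(\frac{\partial^2 u^{11}}{\partial x_1^2}+2\frac{\partial^2 u^{12}}{\partial x_1\partial x_2}+\frac{\partial^2 u^{22}}{\partial x_2^2}\right).
$$
Because of the $U(2)$-symmetry the answer must be a function of $r$ alone, so I expect massive cancellation; the computation is essentially reducing a second-order differential operator in two variables to an ordinary differential expression in $r$. Carrying the derivatives through and collecting terms, the claim is that everything organizes into $2q''(r)/r$. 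I would verify this by differentiating carefully, keeping track of the chain rule $\partial_{x_i}=\partial_r$ on any function of $r$ while $\partial_{x_i}(1/x_j)$ contributes the non-symmetric pieces; the expectation is that the $1/x_1$, $1/x_2$ terms from the diagonal of the inverse cancel against the off-diagonal contribution, leaving only the $r$-dependence coming from $q$.

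\smallskip

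\noindent\textbf{Finally}, the extremal characterization is immediate once the scalar curvature formula is established. By Abreu's criterion (recalled just before the statement), $g_{u_h}$ is extremal precisely when $\scal(g_{u_h})$ is affine in $(x_1,x_2)$; since $\scal=2q''(r)/r$ depends only on $r$, it is affine in $(x_1,x_2)$ if and only if it is affine in $r$, i.e. $2q''(r)/r=\alpha+\beta r$ for constants $\alpha,\beta$. Solving $q''(r)=\tfrac12(\alpha r+\beta r^2)$ by integrating twice shows $q$ must be a polynomial of degree at most $4$ with vanishing quadratic coefficient $q_2=0$ (the two integration constants supply $q_0,q_1$, while the absence of a constant term in $2q''/r$ forces $q_2=0$); writing $q(r)=\sum_{i=1}^4 \tfrac{q_i}{i!}r^i$ then gives $q''(r)=q_3 r+q_4\cdot\tfrac{r^2}{2}\cdot\tfrac{2}{r}$—more carefully $q''(r)=q_3 r+\tfrac12 q_4 r^2$ so that $2q''/r=2q_3+q_4 r$, exactly the stated scalar curvature.

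\smallskip

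\noindent\textbf{The main obstacle} is the bookkeeping in the scalar-curvature computation: inverting the Hessian and pushing a fourth-order-in-appearance (second derivatives of the inverse, which itself involves $h''$) differential expression through Abreu's formula is algebraically heavy, and the miracle that the two-variable expression reduces to the clean $2q''(r)/r$ depends on delicate cancellations. Choosing the substitution $h''=q/(r(r^2-q))$ so that $1+rh''=r^2/(r^2-q)$ is the device that tames the denominators; the rest is disciplined differentiation. I do not anticipate conceptual difficulty, only the risk of computational error, so I would organize the calculation to exploit the $r$-dependence as early as possible to minimize the number of terms.
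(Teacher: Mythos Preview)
Your proposal is correct and follows essentially the same route as the paper: invert $\Hess(u_h)$ explicitly, plug into Abreu's formula, reduce to an ODE in $r$, and read off the extremal condition. The only organisational difference is that the paper first introduces the auxiliary variable $f$ via $h''=\tfrac{f}{1-rf}$ (equivalently $f=q/r^3$), which makes the inverse Hessian literally linear in $f$,
$$(\Hess u_h)^{-1}=2\begin{pmatrix} x_1-x_1^2 f & -x_1x_2 f\\ -x_1x_2 f & x_2-x_2^2 f\end{pmatrix},$$
so that Abreu's formula collapses immediately to $\scal=2r^2f''+12rf'+12f$ before one substitutes $f=q/r^3$; this is precisely the ``disciplined differentiation'' you anticipate, packaged to minimise the bookkeeping you flag as the main obstacle.
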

\begin{proof}
	We follow \cite{aorb}. The proof uses Abreu's formula
	\begin{align*}
		\scal(g_{u_h})= - \frac{\partial^2 u^{ij} }{\partial x_i \partial x^j}.
	\end{align*}
	Now
	$$
	\Hess(u_h)=
	\frac{1}{2}\begin{pmatrix}
		\frac{1}{x_1} + h''&  h''\\
		h''&\frac{1}{x_2}+ h''\\
	\end{pmatrix}.
	$$
	Set $f$ to be such that $h''=\frac{f}{1-rf}$ and rewrite 
	$$
	\Hess(u_h)=
	\frac{1}{2}\begin{pmatrix}
		\frac{1}{x_1} + \frac{f}{1-rf}&  \frac{f}{1-rf}\\
		\frac{f}{1-rf} &\frac{1}{x_2}+ \frac{f}{1-rf}\\
	\end{pmatrix}.
	$$
	Then, we find that
	$$
	(\Hess(u_h))^{-1}=2
	\begin{pmatrix}
		x_1-x_1^2f&-x_1x_2 f\\
		-x_1x_2 f&x_2-x_2^2f\
	\end{pmatrix},
	$$

	and inserting the formula for $u^{ij}$ in terms of $f$ gives the equation 
	$$\scal(g_u)=2r^2 f''(r)+12 r f'(r) +12 f(r).$$
	Set $f(r)=q(r)r^{-3}.$ Replacing in the previous formula yields the formula for $\scal(g_u)$ in terms of $q.$
	
	Next, the metric $g_{u_h}$ is extremal if and only if its scalar curvature is an affine function of $x_1,x_2$. However, as $\scal(g_u)=\frac{2q''}{r}$ is a function of $r$, the only possibility is that it be affine in $x_1+x_2=r.$ Say it is $A_0 + A_1r$. The resulting equation is $2q''=A_0r+A_1r^2$ and integrating this gives
	$$q(r)= q_0 + q_1 r + \frac{A_0}{12} r^3 + \frac{A_1}{24} r^4,$$
	i.e. $q_2=0$, $q_3=\frac{A_0}{2}$, and $q_4=A_1$. Back to the scalar curvature, it can then be written as
	$$\scal(r)=2q_3 + q_4 r,$$
	and we are done.
	
\end{proof}
In summary a $U(2)$-invariant, K\"ahler metric either on the total space of $\mO(-m)$ or on $\bbH_m$ is determined by a one variable function. For future reference we leave a summary of the encoding functions we have used. 
\begin{remark}[On notation]\label{rem:Different parametrizations}
	A  $U(2)$-invariant, K\"ahler metric on the total space of $\mO(-m)$ or on $\bbH_m$ is determined by either:
	\begin{itemize}
	        \item A function $h$ determining a symplectic potential $u_h$ via equation (\ref{eq:symplectic potential}).
		\item A function $f$ of the form
		$$h''(r)=\frac{f(r)}{1-rf(r)} .$$
		\item A function $q$ such that
		$$h''(r)= \frac{1}{r}\frac{q(r)}{r^2-q(r)},$$
		in which case $f(r)$ and $q(r)$ are related by $f(r)=\frac{q(r)}{r^3}$.
		\item A function $\pol$ such that
		$$h''(r)= -\frac{1}{r} + \frac{r}{\pol(r)},$$
		and so $\pol(r)=r^2-q(r)$. Sometimes it will also be convenient to consider another way to parameterise the geometry which is scale invariant. For this we use $t=\frac{r}{a}$ and the polynomial $\tilde{\pol}(t)=\frac{\pol(at)}{a^2}$.
	\end{itemize} 
	A  $U(2)$-invariant, K\"ahler, {\bf extremal} metric on the total space of $\mO(-m)$ or on $\bbH_m$ is determined by either:
	\begin{itemize}
	        \item A function $h$ satisfying
	        $$h''(r)= -\frac{1}{r} + \frac{r}{\pol(r)},$$
	        for a degree $4$ polynomial $\pol$ whose quadratic term is $r^2$.
		\item A function $f$ such that $f(r)=\frac{q(r)}{r^3}$ for a degree $4$ polynomial $q$ whose quadratic term is zero. 
		\item A degree $4$ polynomial $q$ whose quadratic term is zero.
		\item A degree $4$ polynomial $\pol(r)$ whose quadratic term is $r^2$ such that
		 $\pol(r)=r^2-q(r)$.
	\end{itemize} 
\end{remark}

\subsection{Smoothness and convexity}

\subsubsection{Smoothness}

The smoothness of the metric $g_{u_h}$ follows from the boundary conditions on $u_h$ and it follows from Lemma \ref{lemma:smooth} that the conditions on $u_h$ will be met as long as either condition (\ref{smoothness_in_h_Om}) or (\ref{smoothness_in_h_Hm}) are satisfied. These can be summed up in terms of the notation above as
\begin{itemize}
\item  In the case of the total space of $\mO(-m)$
\begin{equation}\label{eq:conditions for closing smoothly in terms of q1}
	\begin{aligned} 
		q(a) = a^2 & ,  \\ 
		q'(a)= (2-m) a&  . 
	\end{aligned}
	\end{equation}
\item In the case of $\bbH_m$
\begin{equation}\label{eq:conditions for closing smoothly in terms of q2}
	\begin{aligned} 
		q(a) = a^2 & , \quad q(b)=b^2 \\ 
		q'(a)= (2-m) a & , \quad q'(b)= \left( 2+ \frac{m}{p} \right) b . 
	\end{aligned}
\end{equation}
\end{itemize}

Similarly, they can be rewritten in terms of $\pol(r)$ as 
\begin{itemize}
\item In the case of the total space of $\mO(-m)$
\begin{equation}\label{eq:conditions for closing smoothly in terms of p1}
	\begin{aligned} 
	&\pol(a)=0,  \\ 
	&\pol'(a)=am.
	\end{aligned}
\end{equation}
\item In the case of $\bbH_m$
\begin{equation}\label{eq:conditions for closing smoothly in terms of p2}
	\begin{aligned} 
	&\pol(a)=0 = \pol(b), \\ 
	&\pol'(a)=am, \quad \pol'(b)=\frac{-bm}{p}.
	\end{aligned}
\end{equation}
\end{itemize}

\subsubsection{Convexity} 
In this subsection we will derive a simple criterium to check convexity for symplectic potentials of $U(2)$-invariant K\"ahler metric i.e. metrics whose symplectic potentials are given in terms of a function $h$ via equation (\ref{eq:symplectic potential}). 
\begin{lemma}\label{lem:Convexity}
	{Let $h$ be a smooth function in $]a,+\infty[$ or in $]a,b[$ and let $u$ be defined by $h$ via equation (\ref{eq:symplectic potential}).} Then
	$\Hess(u)$ is positive definite on the interior of $P_\mO$ or of $P_H$ respectively if and only if the {function}
	$$\pol(r):=r^2-q(r)$$
	is positive on $[a,+\infty[$ or $[a,b]$ respectively.

\end{lemma}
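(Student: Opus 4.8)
The plan is to reduce positive definiteness of the $2\times 2$ symmetric matrix $\Hess(u)$ to scalar inequalities via Sylvester's criterion, and then translate these into a single condition on $\pol$. From the computation in the proof of Lemma \ref{lemma:smooth} I already have
$$
\Hess(u)=\frac{1}{2}\begin{pmatrix} \frac{1}{x_1}+h'' & h'' \\ h'' & \frac{1}{x_2}+h'' \end{pmatrix}, \qquad \det(\Hess(u))=\frac{1+rh''}{x_1x_2}
$$
(up to a positive multiplicative constant), where $r=x_1+x_2$. The first step is to rewrite $1+rh''$ using the relation $h''(r)=\frac{q(r)}{r(r^2-q(r))}=\frac{q(r)}{r\,\pol(r)}$ recorded in Remark \ref{rem:Different parametrizations}. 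A direct manipulation yields the key identity
$$
1+rh''=1+\frac{q(r)}{\pol(r)}=\frac{r^2}{r^2-q(r)}=\frac{r^2}{\pol(r)}.
$$

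With this identity the determinant condition is immediate: on the interior of $P_\mO$ or $P_H$ one has $x_1,x_2>0$ and $r>0$, so $\det(\Hess(u))$ has the same sign as $\frac{r^2}{\pol(r)}$, whence $\det(\Hess(u))>0$ if and only if $\pol(r)>0$. By Sylvester's criterion it then remains to check that, whenever $\pol(r)>0$, the leading entry $\frac{1}{2}\left(\frac{1}{x_1}+h''\right)$ is positive as well (a positive determinant alone only forces the two diagonal entries to share a sign). Here the identity is used a second time: $\pol(r)>0$ gives $rh''=\frac{r^2}{\pol(r)}-1>-1$, hence $h''>-\frac{1}{r}$, and therefore
$$
\frac{1}{x_1}+h''>\frac{1}{x_1}-\frac{1}{r}=\frac{x_2}{x_1 r}>0
$$
because $x_2>0$. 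Thus positivity of $\pol$ at $r=x_1+x_2$ yields positive definiteness at $(x_1,x_2)$, while conversely positive definiteness forces $\det(\Hess(u))>0$ and hence $\pol(r)>0$.

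Finally, since the outcome at a point depends on $(x_1,x_2)$ only through the positivity $x_1,x_2>0$ and through the value $\pol(r)$ at $r=x_1+x_2$, letting $(x_1,x_2)$ range over the interior of the polytope makes $r$ range over $]a,+\infty[$ (respectively $]a,b[$), so the pointwise equivalence upgrades to the stated one. I do not anticipate any serious obstacle: the only genuinely non-formal point is recognising that the determinant inequality by itself is insufficient for Sylvester's criterion, so that the missing sign of the diagonal entry must be extracted from the same identity $1+rh''=r^2/\pol(r)$ together with the elementary bound $\tfrac{1}{x_1}-\tfrac{1}{r}=\tfrac{x_2}{x_1 r}>0$; everything else is short algebra.
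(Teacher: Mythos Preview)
Your proof is correct. It differs from the paper's in execution rather than in substance: the paper computes $(\Hess(u))^{-1}$ explicitly and then applies the ``trace and determinant both positive'' criterion to that $2\times 2$ matrix, which requires a short case split on the sign of $q(r)$ to verify the trace inequality. You instead apply Sylvester's criterion directly to $\Hess(u)$, and your bound $\tfrac{1}{x_1}+h''>\tfrac{1}{x_1}-\tfrac{1}{r}=\tfrac{x_2}{x_1 r}>0$ sidesteps any case analysis. Both routes rest on the same determinant identity $1+rh''=r^2/\pol(r)$; yours is marginally more direct, while the paper's has the incidental advantage that the explicit formula for $(\Hess(u))^{-1}$ is reused in later computations (Abreu's formula for the scalar curvature and the Laplacian calculation in Lemma~\ref{lemma:Laplacian gamma}).
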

\begin{proof}
	It turns out to be a little more convenient to prove that $(\Hess(u))^{-1}$ is positive definite. To do so we compute $(\Hess(u))^{-1}$ as follows. First, we know that
	$$
	\Hess(u)=
	\frac{1}{2}\begin{pmatrix}
		\frac{1}{x_1} + h''&  h''\\
		h''&\frac{1}{x_2}+ h''\\
	\end{pmatrix},
	$$
	and substituting $h''=\frac{f}{1-rf}$ gives
	$$
	\Hess(u)=
	\frac{1}{2}\begin{pmatrix}
		\frac{1}{x_1} + \frac{f}{1-rf}&  \frac{f}{1-rf}\\
		\frac{f}{1-rf} &\frac{1}{x_2}+ \frac{f}{1-rf}\\
	\end{pmatrix},
	$$
	Then, we find that
	$$
	(\Hess(u))^{-1}=2
	\begin{pmatrix}
		x_1-x_1^2f&-x_1x_2 f\\
		-x_1x_2 f&x_2-x_2^2f\
	\end{pmatrix},
	$$
	and so
	\begin{align*}
		\mathrm{tr} ((\Hess(u))^{-1}) & = 2 (x_1-x_1^2f+ x_2-x_2^2f) \\
		& = 2 (r - (r^2-2x_1x_2)f ) \\
		& = 4x_1x_2 f + 2r (1-rf) \\
		\det(\Hess (u)) & =  \frac{1}{4x_1 x_2(1-rf)}.
	\end{align*}
	These can be rewritten in terms of $q$ as
	\begin{align*}
		\mathrm{tr} ((\Hess(u))^{-1}) & =  \frac{4x_1x_2q}{r^3} +  \frac{2(r^2-q)}{r} \\
		\det(\Hess (u)) & = \frac{r^2}{4x_1 x_2(r^2-q)}.
	\end{align*}
	Now, the metric is positive definite if and only if $\mathrm{tr} ((\Hess(u))^{-1})$ and $\det(\Hess (u))$ are both positive. From the second we find that $r^2-q$ must be positive. From the first, we have $\frac{2x_1x_2q}{r^2}  +r^2 -q>0$. If $q>0$, then it is enough that $r^2-p>0$. On the other hand, given that $0 \leq x_1x_2 \leq \frac{r^2}{4}$, if $q<0$ we have $x_1x_2 q \geq {\frac{r^2q}{4}}$ and so
	$$\frac{2x_1x_2q}{r^2}  +r^2 -q > \frac{q}{2} + r^2 -q = - \frac{q}{2} + r^2 >0,$$
	as both terms are positive.

\end{proof}

\subsection{Einstein metrics from conformally K\"ahler geometry} 

 In differential geometry one is often looking for a canonical metric which reflects intrinsic properties of an underlying manifold. Extremal K\"ahler metrics can play that role but it is known that these do not always exist. Also, when they exist, extremal metrics are the ``best" in their K\"ahler class. It is then natural to look for other special metrics in the conformal class of a K\"ahler metric. 
Derdzi\'nski considered the problem of finding conformally K\"ahler Einstein metrics (see \cite{d}).
\begin{question*}
Let $M$ be a K\"ahler manifold. For which K\"ahler metrics $g_K$ on $M$ does there exist a function $\sigma$ such that $e^\sigma g_K$ is Einstein? 
\end{question*}
In \cite{d} Derdzi\'nski answers the question by proving the following theorem.
\begin{thm}[Derdzi\'nski]\label{Thmderd1}
Let $(M,g_K,\omega)$ be a K\"ahler manifold whose scalar curvature in nowhere vanishing. If the Hermitian metric $\frac{g_K}{\scal^2(g_K)}$ is Einstein then $g_K$ is Bach-flat and extremal. In this setting, if $g_K$ extremal then $g_K$ is Bach-flat if and only if the scalar curvature of $\frac{g_K}{\scal^2(g_K)}$ is constant.
\end{thm}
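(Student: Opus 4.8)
The plan is to reduce the Einstein equation for the conformal metric $g:=\scal^{-2}(g_K)\,g_K$ to a single second-order equation on $s:=\scal(g_K)$, and then to decompose that equation into its $J$-invariant and $J$-anti-invariant parts using the Kähler structure. Two standard inputs from four-dimensional conformal geometry handle the Bach-flat half of the first assertion immediately: the Bach tensor is conformally invariant, and every Einstein four-manifold is Bach-flat. Hence if $g$ is Einstein it is Bach-flat, and since $g_K$ lies in the same conformal class, $g_K$ is Bach-flat as well, with no computation required.

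For the rest of the first assertion I would use the conformal transformation law of the trace-free Ricci tensor. Writing $g=e^{2f}g_K$ in dimension four, one has $\mathring{\mathrm{Ric}}(g)=\mathring{\mathrm{Ric}}(g_K)-2\bigl(\nabla^2 f-df\otimes df\bigr)^{\circ}$, where $\circ$ and all derivatives are taken with respect to $g_K$. The decisive point is the choice $e^{2f}=s^{-2}$, i.e. $f=-\log|s|$: a short computation gives $\nabla^2 f-df\otimes df=-s^{-1}\nabla^2 s$, the quadratic terms in $df$ cancelling exactly. Therefore $g$ is Einstein if and only if
\[
s\,\mathring{\mathrm{Ric}}(g_K)+2\,\mathring{\nabla^2 s}=0. \qquad (\star)
\]
Now the Ricci tensor of a Kähler metric is $J$-invariant, so $\mathring{\mathrm{Ric}}(g_K)$ is $J$-invariant, while the Hessian $\nabla^2 s$ splits into a $J$-invariant part and a $J$-anti-invariant part (the latter automatically trace-free). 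Splitting $(\star)$ accordingly, its $J$-anti-invariant component reads $(\nabla^2 s)^{-J}=0$, which is precisely the statement that $\mathrm{grad}_{g_K}s$ is a real holomorphic vector field, i.e. that $g_K$ is extremal. This yields the extremal half of the first assertion, and reduces the Einstein condition, for extremal $g_K$, to the residual $J$-invariant equation $s\,\mathring{\mathrm{Ric}}(g_K)+2\,(\nabla^2 s)^{+J}_{\circ}=0$, which I will call $(\star\star)$.

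It then remains to prove the second assertion, that for extremal $g_K$ Bach-flatness is equivalent to constancy of $\scal(g)$, and here I expect the genuine curvature computation to be the main obstacle. The strategy is to exploit that on a Kähler surface the self-dual Weyl tensor $W^+$ is algebraically determined by $s$, with eigenvalues $\tfrac{s}{6},-\tfrac{s}{12},-\tfrac{s}{12}$ on $\Lambda^+$, so that $|W^+|^2=\tfrac{s^2}{24}$, while its divergence $\delta W^+$ is controlled by $ds$ through the differential Bianchi identity for the Ricci form. Feeding this into the four-dimensional identity that expresses $B$ through the double divergence $\delta\delta W^+$ and a term $W^+\!\cdot\mathring{\mathrm{Ric}}$, and simplifying under the extremality hypothesis $(\nabla^2 s)^{-J}=0$, one should find that $B(g_K)=0$ is equivalent to $(\star\star)$.

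In parallel, the conformal (Yamabe) change of scalar curvature, using the dimension-four conformal Laplacian $-6\,\Delta+\scal$ with $\Delta=\mathrm{tr}_{g_K}\nabla^2$, gives the explicit formula
\[
\scal(g)=s^3+6\,s\,\Delta s-12\,|\nabla s|^2,
\]
and a second application of the same Kähler identities should identify constancy of $\scal(g)$ with $(\star\star)$ as well. Chaining these equivalences produces, for extremal $g_K$, the string $g_K$ Bach-flat $\iff(\star\star)\iff\scal(g)$ constant, which is exactly the second assertion (and is consistent with the first, since $(\star\star)$ together with extremality is precisely the Einstein condition). The delicate part throughout is the bookkeeping in $\delta\delta W^+$ and the verification of the cancellations that make $B(g_K)$, the equation $(\star\star)$, and the trace-free data of $\scal(g)$ coincide up to a nonzero factor; isolating this single cumbersome calculation is the whole point of the preceding reductions.
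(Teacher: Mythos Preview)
Your proposal is largely correct and follows the same architecture as the paper's argument, with one improvement and one step that needs to be made concrete.

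The improvement: deducing Bach-flatness of $g_K$ directly from the conformal invariance of the Bach tensor and the fact that Einstein four-manifolds are Bach-flat is slicker than what the paper does. The paper instead obtains Bach-flatness as a by-product of the conformal Ricci computation: from $g=s^{-2}g_K$ Einstein one gets $(\mathrm{Ric}_{g_K})_0=-2s^{-1}(\nabla^2 s)_0$, and then invokes the explicit formula
\[
B=\tfrac{1}{12}\bigl[s\,(\mathrm{Ric}_{g_K})_0+2(\nabla^2 s)_0\bigr]
\]
valid for extremal K\"ahler surfaces, which makes $B=0$ immediate. Your $J$-splitting of $(\star)$ to extract extremality is exactly the paper's argument.

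The step to sharpen is your claim that ``a second application of the same K\"ahler identities should identify constancy of $\scal(g)$ with $(\star\star)$.'' As written this is a tensor equation versus a scalar one, and the passage from the latter to the former is the crux. The paper's mechanism is to evaluate $B$ (equivalently, the left side of $(\star\star)$) on $\nabla s$. Using that $J\nabla s$ is Killing together with the $J$-invariance of $\mathrm{Ric}$ one finds $\mathrm{Ric}(\nabla s)=\tfrac{1}{2}\nabla\Delta s$, and a short computation gives
\[
12\,B(\nabla s)=\tfrac{1}{12}\,\nabla\!\left(6s\,\Delta s+12|\nabla s|^2-s^3\right)=-\tfrac{1}{12}\,\nabla\scal(g).
\]
The final, essential observation is algebraic: any trace-free $J$-invariant symmetric $2$-tensor in real dimension four has eigenvalues $\pm\lambda$, each double; hence $B(\nabla s)=0$ at a point with $\nabla s\neq 0$ forces $B=0$ there. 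This is what lets a single scalar equation control the full tensor $(\star\star)$. Your outline via $\delta\delta W^+$ would eventually recover the same formula for $B$, but you should make this eigenvalue argument explicit, since without it there is no reason constancy of $\scal(g)$ should imply $(\star\star)$.
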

The Bach tensor is a conformal invariant defined using the Riemann curvature tensor. For more on the Bach tensor see Appendix \ref{d}. In \cite{d} there is also a converse to the above theorem. Namely
\begin{thm}[Derdzi\'nski]\label{Thmderd2}
Let $(M,g_K,\omega)$ be an Bach-flat K\"ahler manifold. Then, the Hermitian metric $\frac{g_K}{\scal^2(g_K)}$ is Einstein wherever defined.
\end{thm}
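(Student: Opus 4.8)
The plan is to reduce the Einstein condition on $\tilde{g}:=g_K/\scal^2(g_K)$ to a single curvature identity for $g_K$, and then to recognise that identity as the content of the Bach-flat equation. Set $s=\scal(g_K)$ and work on $\{s\neq 0\}$, where $\tilde g$ is defined. Writing $\tilde g=e^{2f}g_K$ with $f=-\log|s|$, I would apply the dimension-four conformal transformation law for the trace-free Ricci tensor; substituting $df=-s^{-1}ds$ and $\Hess(f)=-s^{-1}\Hess(s)+df\otimes df$ collapses the first-order terms and yields
\[ \mathrm{Ric}_0(\tilde g)=\mathrm{Ric}_0(g_K)+\frac{2}{s}\,\Hess_0(s), \]
where the subscript $0$ denotes the $g_K$-trace-free part (trace-freeness being conformally insensitive). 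Hence $\tilde g$ is Einstein if and only if the tensor
\[ E:=s\,\mathrm{Ric}_0(g_K)+2\,\Hess_0(s) \]
vanishes, and everything reduces to deriving $E=0$ from Bach-flatness.

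Next I would exploit the Kähler structure. On a Kähler surface the self-dual Weyl tensor is controlled entirely by $s$: as a symmetric endomorphism of $\Lambda^+$ it has $\om/|\om|$ as eigenvector with simple eigenvalue $s/6$ and double eigenvalue $-s/12$ on the orthogonal complement, so $W^+=\tfrac{s}{12}\bigl(3\,\widehat\om\otimes\widehat\om-\mathrm{Id}_{\Lambda^+}\bigr)$ with $\widehat\om=\om/|\om|$. The decisive point is that $\om$ is parallel, so every covariant derivative lands on the scalar factor: $\nabla W^+$ is built from $ds$ and $\nabla^2 W^+$ from $\Hess(s)$, the parallel algebraic tensor $3\,\widehat\om\otimes\widehat\om-\mathrm{Id}_{\Lambda^+}$ being carried along untouched.

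I would then compute the Bach tensor directly. In dimension four it may be expressed through the self-dual part alone, $B_{ij}=2\nabla^k\nabla^l W^+_{kijl}+R^{kl}W^+_{kijl}$. Inserting the explicit $W^+$, the first term contracts $\Hess(s)$ against the $\om$'s and returns a multiple of $\Hess_0(s)$ up to a pure-trace piece, while the second contracts $\mathrm{Ric}$ against $\om\otimes\om$ and, since the Kähler Ricci tensor is $J$-invariant, returns a multiple of $s\,\mathrm{Ric}_0(g_K)$ up to pure trace. Collecting the terms should identify $B$ as a fixed nonzero multiple of $E$; Bach-flatness then forces $E=0$, and therefore $\mathrm{Ric}_0(\tilde g)=0$ on $\{s\neq 0\}$, as claimed. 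As a consistency check, the reverse implication (Theorem \ref{Thmderd1}) says an Einstein $\tilde g$ is Bach-flat, so $B$ and $E$ must share the same zero set, which is exactly what a proportionality $B=cE$ records.

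I expect the real work to sit in this last computation: correctly invoking the dimension-four identity for $B$ in terms of $W^+$, and then carrying out the contractions of $\Hess(s)$ and $\mathrm{Ric}$ against the parallel tensor $3\,\widehat\om\otimes\widehat\om-\mathrm{Id}_{\Lambda^+}$ while tracking every constant. A good way to organise the bookkeeping is to split $E$ by the complex structure: its $J$-anti-invariant part is $2\,(\Hess s)^{J\text{-anti}}$, whose vanishing is precisely the extremality of $g_K$ (holomorphicity of $\mathrm{grad}\,s$), while its $J$-invariant part carries the genuine Einstein equation relating $s\,\mathrm{Ric}_0$ and $(\Hess_0 s)^{J\text{-inv}}$. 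The computation should thus simultaneously reproduce the known fact that Bach-flat Kähler metrics are extremal and deliver $E=0$; matching the constants on each $J$-component, and confirming the relative factor $1:2$ between the Ricci and Hessian contributions, is the main obstacle.
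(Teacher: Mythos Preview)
Your approach is correct and aligns with the paper's core argument. Both hinge on the same two facts: (i) the conformal formula $\mathrm{Ric}_0(\tilde g)=s^{-1}E$ with $E=s\,\mathrm{Ric}_0(g_K)+2\,\Hess_0(s)$, and (ii) the identity $B=\tfrac{1}{12}E$ for K\"ahler surfaces, so that Bach-flatness gives $E=0$ and hence $\tilde g$ is Einstein.

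The organisational difference is this. You propose to \emph{derive} $B=cE$ from first principles by inserting the K\"ahler formula $W^+=\tfrac{s}{12}(3\,\widehat\omega\otimes\widehat\omega-\mathrm{Id}_{\Lambda^+})$ into the Bach tensor and carrying out the contractions; the paper instead \emph{quotes} the formula $B=\tfrac{1}{12}[s(\mathrm{Ric}_h)_0+2(\Hess s)_0]$ as a known property of (extremal) K\"ahler metrics and then, rather than concluding immediately, takes a detour: it shows (Lemma~\ref{lem:Derdzinsky equivalence between vanishing Bach and scalar eq}) that for extremal K\"ahler metrics, $B=0$ is equivalent to the scalar identity $s^3-6s\Delta s-12|ds|^2=\text{const}$, and only then reverses the conformal computation. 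That detour is not needed for the bare statement of Theorem~\ref{Thmderd2}, but it pays off elsewhere in the paper, where the scalar equation is exactly the condition checked in Proposition~\ref{prop:Conformal Einstein}. Your direct route is cleaner for the theorem itself; the paper's route is tuned to its downstream applications. Your observation that the $J$-anti-invariant part of $E$ recovers extremality is also how the paper (implicitly, via \cite{l2}) handles the fact that Bach-flat K\"ahler $\Rightarrow$ extremal.
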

We say more on Derdzi\'nski's work in Appendix \ref{d}. In particular we discuss its proof. In fact, in dimension $4,$ K\"ahler metrics which are Bach-flat are always extremal (see \cite{l2}), so even though we are apparently using the extremality condition, in reality we are not.

\section{Extremal and Bach-flat K\"ahler metrics}

\subsection{Bach-flateness}\label{sec:bachflat}

Our goal in this paper is to find Einstein metrics which are $U(2)$-invariant and conformally K\"ahler. In view of Derdzi\'nski's Theorems we must start by looking for Bach-flat metrics. Since we have a large family of extremal metrics from Calabi's ansatz as in Theorem \ref{prop:scalar curvature and extremal metrics} we shall move to understand which of those extremal metrics are Bach-flat.

In our setting, Theorem \ref{Thmderd1} leads to the following.
\begin{prop}\label{prop:Conformal Einstein}
	{Consider the polynomial $$q(r)=q_0+q_1r+\frac{q_3}{6}r^3+\frac{q_4}{12}r^4.$$} Let $h$ be such that $$h''(r)= \frac{q(r)}{r(r^2-q(r))}.$$
	Let $g=g_{u_h}$ be the extremal K\"ahler metric determined by $h$ as in Proposition \ref{prop:scalar curvature and extremal metrics}. Such a metric is Bach-flat if and only if
	\begin{equation}\label{eq:Bach flat}
		q_3 q_1  = q_4 q_0.
	\end{equation}
	Equivalently, where defined, the conformally related metric $(\scal(g))^{-2}g$ is Einstein if and only if equation (\ref{eq:Bach flat}) holds. In this situation, its scalar curvature is given by
	\begin{align*}
		S & = 12 q_4^2 q_1 + 8 q_3^3 + 48 q_3 q_4.
	\end{align*}
\end{prop}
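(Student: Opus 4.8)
The plan is to reduce Bach-flatness to a single scalar-curvature computation via Derdzi\'nski's theorem, and then to carry that computation out explicitly in the action-angle coordinates. By Theorem \ref{prop:scalar curvature and extremal metrics} the metric $g=g_{u_h}$ is extremal, with $\scal(g)=:S=2q_3+q_4 r$ an affine (hence generically non-vanishing) function of $r=x_1+x_2$. Wherever $S\neq 0$ we may form the Hermitian metric $\tilde g=S^{-2}g$. Since $g$ is extremal, Theorem \ref{Thmderd1} says that $g$ is Bach-flat exactly when $\scal(\tilde g)$ is constant, and in that case Theorem \ref{Thmderd2} makes $\tilde g$ Einstein. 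Thus the entire statement reduces to computing $\scal(\tilde g)$ and deciding when it is constant.

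First I would record the dimension-four conformal change formula: for $\tilde g=S^{-2}g$,
\[
\scal(\tilde g)=S^3+6\,S\,\Delta_g S-12\,|\nabla S|_g^2,
\]
where $\Delta_g$ is the (div grad) Laplace--Beltrami operator of $g$. The key simplification is that both $\Delta_g S$ and $|\nabla S|_g^2$ are elementary in action-angle coordinates. Indeed the metric \eqref{g_u} has $\det g\equiv 1$, so for a radial function $\psi=\psi(r)$ one has $\Delta_g\psi=\sum_{i,j}\partial_{x_i}(u^{ij}\partial_{x_j}\psi)$; using the inverse Hessian from the proof of Lemma \ref{lem:Convexity}, whose row sums are $u^{i1}+u^{i2}=2x_i(1-rf)=2x_i\,P/r^2$ with $P:=r^2-q$, this collapses to the clean formulas
\[
\Delta_g\psi=\frac{2}{r}\,(P\,\psi')',\qquad |\nabla\psi|_g^2=\frac{2P}{r}\,(\psi')^2.
\]
Applied to $S$, whose derivative $S'=q_4$ is constant, these give $\Delta_g S=2q_4P'/r$ and $|\nabla S|_g^2=2q_4^2P/r$, whence
\[
\scal(\tilde g)=S^3+\frac{12\,q_4\,(S\,P'-2q_4\,P)}{r}.
\]

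Next I would expand the right-hand side as a Laurent polynomial in $r$, substituting $P=r^2-q$ and $S=2q_3+q_4 r$. The mechanism that makes the statement work is a triple cancellation: the $r^1,r^2,r^3$ contributions of $S^3$ are matched exactly by those coming from $12q_4(SP'-2q_4P)/r$, so $\scal(\tilde g)$ collapses to the form $(\text{constant})+C/r$ with $C=24\,q_4\,(q_0q_4-q_1q_3)$. Since $\scal(\tilde g)$ is a function of $r$ alone, it is constant if and only if $C=0$, that is (for $q_4\neq 0$) if and only if $q_3q_1=q_4q_0$, which is \eqref{eq:Bach flat}. When this holds the surviving constant equals $8q_3^3+48q_3q_4+12q_1q_4^2$, the asserted value of $S$.

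I expect the only delicate point to be the bookkeeping in this last expansion: one must verify the simultaneous vanishing of the $r$, $r^2$ and $r^3$ coefficients, which is precisely what forces the conformal metric to have constant scalar curvature once the single relation $q_3q_1=q_4q_0$ is imposed. The degenerate case $q_4=0$ should be noted separately: there $S$ is constant, $g$ is cscK and $\tilde g$ is a homothety of $g$, so Bach-flatness holds automatically. Finally, the hypotheses of Derdzi\'nski's theorems — $g$ K\"ahler with nowhere-vanishing scalar curvature — hold exactly on $\{S\neq 0\}$, which is the ``where defined'' caveat in the statement, closing the equivalence with the Einstein property.
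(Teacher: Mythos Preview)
Your proof is correct and follows essentially the same route as the paper: reduce Bach-flatness to constancy of $\scal(\tilde g)$ via Derdzi\'nski's theorems, compute the Laplacian of a radial function in action-angle coordinates, and expand. The only organizational difference is that the paper applies the conformal-change formula in the form $\scal(\tilde g)=\scal(g)^3\big(6\Delta_h(\scal(g)^{-1})+1\big)$ (computing $\Delta$ of $\scal(g)^{-1}$ via the analogue of your radial-Laplacian identity, recorded as a separate lemma), whereas you use the equivalent form $\scal(\tilde g)=S^3+6S\Delta_g S-12|\nabla S|_g^2$ and exploit $S'=q_4$ being constant; the two computations unwind to the same polynomial identity, and your packaging $\Delta_g\psi=\tfrac{2}{r}(P\psi')'$ is a clean way to see the cancellations.
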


Before presenting the proof of this proposition we start by proving an auxiliary result.

\begin{lemma}\label{lemma:Laplacian gamma}
	Let $g_u$ be a toric K\"ahler metric induced by a symplectic potential as in (\ref{eq:symplectic potential}) and let $\gamma$ be a function of a single variable. Consider $\gamma (r) = \gamma(x_1+x_2)$ as a function on $\bbH_m.$ Then, the Laplacian $\Delta \gamma$ of this function with respect to $g_u$ is given in terms of $f$ by
	\begin{equation*}
		-\frac{1}{2}\Delta \gamma =\left(2-3rf-r^2f'\right)\gamma'+\left(r-r^2f\right)\gamma''
	\end{equation*}
	or in terms of $q$ by
	\begin{equation*}
		-\frac{1}{2}\Delta \gamma =\left(2-\frac{q'}{r}\right)\gamma'+\left(r- \frac{q}{r}\right)\gamma''.
	\end{equation*}
where $f$ and $q$ are as in Remark \ref{rem:Different parametrizations}.
\end{lemma}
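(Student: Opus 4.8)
The plan is to compute the Laplace--Beltrami operator directly in action--angle coordinates, exploiting the special structure of toric Kähler metrics. With respect to the positive (analyst's) sign convention, $\Delta\gamma = -\tfrac{1}{\sqrt{\det g}}\,\partial_\alpha\big(\sqrt{\det g}\,g^{\alpha\beta}\partial_\beta\gamma\big)$. The crucial simplification is that in the coordinates $(x_1,x_2,\theta_1,\theta_2)$ the metric $g_u$ of \eqref{g_u} is block diagonal, with $x$-block $\Hess(u)$ and $\theta$-block its inverse $(\Hess u)^{-1}$; hence $\det g = \det(\Hess u)\cdot\det(\Hess u)^{-1} = 1$ and the volume form is simply $dx_1\wedge dx_2\wedge d\theta_1\wedge d\theta_2$. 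Consequently $\Delta\gamma = -\partial_\alpha(g^{\alpha\beta}\partial_\beta\gamma)$, and since $\gamma = \gamma(x_1+x_2)$ is independent of the angular variables, only the $x$-block of the inverse metric --- which is precisely $(u^{ij}) = (\Hess u)^{-1}$ --- survives. Thus the computation reduces to $\Delta\gamma = -\sum_{i,j}\partial_{x_i}\big(u^{ij}\,\partial_{x_j}\gamma\big)$.

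For the next step I would insert the explicit inverse Hessian already computed in the proofs of Theorem \ref{prop:scalar curvature and extremal metrics} and Lemma \ref{lem:Convexity}, namely $(\Hess u_h)^{-1} = 2\left(\begin{smallmatrix} x_1 - x_1^2 f & -x_1x_2 f \\ -x_1x_2 f & x_2 - x_2^2 f\end{smallmatrix}\right)$. Because $\gamma$ depends only on $r = x_1+x_2$ we have $\partial_{x_j}\gamma = \gamma'(r)$ for both $j = 1, 2$, so the inner contraction factors as $\sum_j u^{ij}\partial_{x_j}\gamma = \gamma'(r)\sum_j u^{ij}$. A short computation gives the row sums $\sum_j u^{ij} = 2x_i(1 - rf)$, so that $\Delta\gamma = -\sum_i\partial_{x_i}\big(2x_i(1-rf)\gamma'\big)$.

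It then remains to carry out the differentiation, recalling that $f$, $\gamma'$ and $1-rf$ are all functions of $r$ with $\partial_{x_i}r = 1$. Setting $\Phi(r) = (1-rf)\gamma'$, one finds $\Delta\gamma = -\big(4\Phi + 2r\Phi'\big)$; expanding $\Phi' = \gamma'' - f\gamma' - rf'\gamma' - rf\gamma''$ and collecting the coefficients of $\gamma'$ and $\gamma''$ yields $-\tfrac12\Delta\gamma = (2 - 3rf - r^2 f')\gamma' + (r - r^2 f)\gamma''$, which is the first stated formula. Finally, substituting $f = q/r^3$ and using $3rf + r^2 f' = \tfrac{3q}{r^2} + \big(\tfrac{q'}{r} - \tfrac{3q}{r^2}\big) = \tfrac{q'}{r}$ together with $r^2 f = q/r$ converts the coefficients into $2 - q'/r$ and $r - q/r$, giving the second formula.

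The computation is essentially routine; the only genuine points to get right are the block-structure identity $\det g = 1$ (which removes the volume factor and makes the formula clean) and the bookkeeping of the two applications of the chain rule in $r$, together with fixing the sign convention for $\Delta$ so that the stated $-\tfrac12\Delta\gamma$ comes out with the correct signs.
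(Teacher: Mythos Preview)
Your proof is correct and follows essentially the same approach as the paper: both start from $\Delta\gamma = -\sum_{i,j}\partial_{x_i}(u^{ij}\partial_{x_j}\gamma)$, insert the explicit inverse Hessian, and differentiate. Your version is slightly more explicit in justifying why the volume factor drops out (via $\det g = 1$) and organises the algebra through the row sums $\sum_j u^{ij} = 2x_i(1-rf)$ and the auxiliary function $\Phi$, whereas the paper simply quotes the Laplacian formula and expands term by term; these are cosmetic differences, not a different route.
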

\begin{proof}
	In action-angle coordinates we can write the Laplacian of a torus invariant function $\gamma$ as
	$$
	\Delta \gamma=-\frac{\del}{\del x_1}\left(u^{11}\frac{\del \gamma}{\del x_1}+u^{12}\frac{\del \gamma}{\del x_2}\right)-\frac{\del}{\del x_2}\left(u^{12}\frac{\del \gamma}{\del x_1}+u^{22}\frac{\del \gamma}{\del x_2}\right).
	$$
	Let $\gamma$ be any function depending only on $r=x_1+x_2$. Then $$\frac{\del \gamma}{\del x_1}(r)=\frac{\del \gamma}{\del x_2}(r)=\gamma'(r)$$ so
	$$
	\Delta \gamma=-\frac{\del}{\del x_1}\left((u^{11}+u^{12})\gamma'\right)-\frac{\del}{\del x_2}\left((u^{12}+u^{22})\gamma'\right).
	$$
	Now, recall that
	$$
	(\Hess(u))^{-1}=2
	\begin{pmatrix}
		x_1-x_1^2f&-x_1x_2 f\\
		-x_1x_2 f&x_2-x_2^2f\
	\end{pmatrix}.
	$$
	Inserting this into the formula for the Laplacian we see that
	$$
	\begin{aligned}
		-\frac{1}{2}\Delta \gamma&=\frac{\del}{\del x_1}\left((x_1-x_1^2f-x_1x_2 f)\gamma'\right) + \frac{\del}{\del x_2}\left((-x_1x_2 f+x_2-x_2^2f)\gamma'\right)\\
		&=\left(1-2x_1 f-x_2 f-(x_1^2+x_1x_2)f'-x_1 f+1-2x_2 f-(x_1 x_2+x_2^2)f'\right)\gamma' \\
		& \ \ \ \ +\left(x_1-x_1^2f-x_1 x_2 f-x_1 x_2 f+x_2-x_2^2 f\right)\gamma''\\
		&=\left(2-3rf-r^2f'\right)\gamma'+\left(r-r^2f\right)\gamma'',
	\end{aligned}
	$$
	as we wanted to prove. The formula in terms of $q$ follows from direct substitution.
\end{proof}

\begin{proof}[Proof of Proposition \ref{prop:Conformal Einstein}]
	It follows from Derdzi\'nski's Theorem \ref{Thmderd1} that the extremal toric K\"ahler metric $g$ is Bach-flat if and only if $(\scal(g))^{-2}g$ is Einstein and, in this situation, this happens if and only if the scalar curvature $S$ of $(\scal(g))^{-2}g$ is constant. So, we need only compute $S$ which is given by
	$$S= (\scal(g))^{3}( 6 \Delta (\scal(g))^{-1} + 1 ).$$
	Now, we know that $\scal(g) = A_0 + A_1 r$ with $A_0=2q_3$ and $A_1=q_4$.  Setting $\gamma=(\scal(g))^{-1}$ we find that $\gamma'(r)=-A_1(A_1r+A_0)^{-2}=-A_1(\scal(g))^{-2}$ and $\gamma''(r)=2A_1^2(A_1r+A_0)^{-3}=2A_1^2 (\scal(g))^{-3}$. Hence, using lemma \ref{lemma:Laplacian gamma}
	\begin{align*}
		(\scal(g))^3 \Lap (\scal(g))^{-1} & = -2 \left(2-\frac{q'}{r}\right) (\scal(g))^3 \gamma'-2\left(r- \frac{q}{r}\right) (\scal(g))^3\gamma'' \\
		& = 2 \left(2-\frac{q'}{r}\right) A_1 \scal(g) -2\left(r- \frac{q}{r}\right) 2A_1^2.
	\end{align*} 
	and the scalar curvature of $(\scal(g))^{-2}g_{u}$ is therefore given by
	\begin{align*}
		S & = (\scal(g))^{3}( 6 \Lap (\scal(g))^{-1} + 1 ) \\
		& =  12 \left(2-\frac{q'}{r}\right) A_1 \scal(g) - 24\left(r- \frac{q}{r}\right) A_1^2 + (\scal(g))^3 .
	\end{align*}
	Inserting $\scal(g)=A_0 + A_1r$ we further obtain
	$$S= \frac{r (A_0+A_1r)^3 + 24 A_1^2 q  - 12 A_1 (A_0+A_1r) q'   }{r} + 24 A_1 A_0,$$
	and this is constant if and only if 
	$$r (A_0+A_1r)^3 + 24 A_1^2 q  - 12 A_1 (A_0+A_1r) q'    = \lambda r,$$ 
	for some constant $\lambda \in \R$.
	
	At this point we insert the degree $4$-polynomial $q(r)=\sum_{i=1}^4 \frac{q_i}{i!} r^i$ into the equation above. The left hand side of the equation above becomes
	\begin{align*}
		2 (12A_1^2 q_0 -6A_0 q_1) + 2(6A_1^2 q_1 +A_0^2 q_3) r + ( A_0^2 q_4 -2A_1 A_0 q_3  ) r^2.
	\end{align*}
	Recalling that $q_2=0$, the requirement that the terms of order different than $O(r)$ vanish results in the following two equations
	\begin{align*}
		12A_1^2 q_0 -6A_1 A_0 q_1 & = 0 \\
		-2A_1A_0q_3 +A_0^2q_4 & = 0,
	\end{align*}
	which for nonzero $A_1,A_0$ (i.e. nonzero $q_4,q_3$) we can rewrite as
	\begin{align*}
		A_0 q_1 & = 2A_1 q_0 \\
		A_0 q_4 & = 2A_1 q_3.
	\end{align*}
	Then, using the fact that $q_3=\frac{A_0}{2}$ and $q_4=A_1$ these become 
	\begin{align*}
		q_3 q_1 & = q_4 q_0 \\
		q_3 q_4 & = q_4 q_3.
	\end{align*}
	The second of these is vacuous, leaving us with the single equation in the statement.
	
	Furthermore, in this situation the scalar curvature $S$ of $(\scal(g))^{-2}g$ can be written by replacing these relations into the above formula for $S$. This gives
	\begin{align*}
		S & = 2(6A_1^2 q_1 +A_0^2 q_3)  + 24 A_1 A_0 \\
		& = 12 q_4^2 q_1 + 8 q_3^3 + 48 q_3 q_4.
	\end{align*}
\end{proof}

\subsection{Local existence}\label{sec:local}
This subsection proves the existence of a $2$-parameter family (including scaling) of Bach-flat K\"ahler metrics defined around the zero section in the total space of $\mO(-m)$. By Derdzi\'nski's theorem these give rise to a locally defined $2$-parameter family of Einstein metrics ($1$-parameter if we factor out scaling).

\begin{thm}\label{thm:Local existence and uniqueness}
	Let $m \in \N$, $a>0$ and $s(a) \in \R$. Then, there is a unique $U(2)$-invariant Bach-flat K\"ahler metric on a disk bundle inside $\mO(-m)$ such that the volume of the zero section is $a$ and the scalar curvature at the zero section is $s(a)$.
\end{thm}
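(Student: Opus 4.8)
We want to show that the two parameters $(a, s(a))$—the volume of the zero section and the scalar curvature there—uniquely determine a $U(2)$-invariant Bach-flat Kähler metric on a disk bundle in $\mathcal{O}(-m)$. By the earlier results, such a metric is encoded by a degree-$4$ polynomial $q(r) = q_0 + q_1 r + \tfrac{q_3}{6}r^3 + \tfrac{q_4}{12}r^4$ (with $q_2 = 0$ forced by extremality), and the geometric data translate into algebraic constraints on the coefficients $q_0, q_1, q_3, q_4$. The plan is therefore to count equations against unknowns: I would show that the four conditions imposed—two smoothness/closing conditions at $r=a$, one Bach-flatness condition, and one scalar-curvature normalization—pin down the four coefficients uniquely (given $a$), and then verify the resulting $q$ actually yields a genuine metric on a disk bundle.

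**Assembling the equations.** First I would write down the closing-up conditions for $\mathcal{O}(-m)$ from equation (\ref{eq:conditions for closing smoothly in terms of q1}):
\begin{equation*}
q(a) = a^2, \qquad q'(a) = (2-m)a.
\end{equation*}
Next, the scalar curvature is $\scal(r) = 2q_3 + q_4 r$ by Theorem \ref{prop:scalar curvature and extremal metrics}, so the normalization at the zero section reads
\begin{equation*}
\scal(a) = 2q_3 + q_4 a = s(a).
\end{equation*}
Finally, Bach-flatness is the single equation (\ref{eq:Bach flat}), namely $q_3 q_1 = q_4 q_0$. This gives four equations in the four unknowns $q_0, q_1, q_3, q_4$. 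The first three are linear in the coefficients, and the fourth is bilinear; the strategy is to solve the linear system as far as possible and then resolve the remaining freedom using the Bach-flatness relation. I would expect to solve the two smoothness conditions together with the scalar normalization to express, say, $q_0, q_1$ and one combination of $q_3, q_4$ in terms of the remaining free coefficient, and then feed these into $q_3 q_1 = q_4 q_0$ to obtain a single equation pinning down the last degree of freedom.

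**Uniqueness and the main obstacle.** The existence and uniqueness of the coefficient tuple reduces to showing the combined system has exactly one solution. The linear part is straightforward; the subtlety is the bilinear Bach-flat equation, which could a priori admit zero, one, or several solutions. The main obstacle I anticipate is therefore twofold: (i) verifying that substituting the linear relations into $q_3 q_1 = q_4 q_0$ yields an equation with a unique relevant root (rather than a spurious branch or no solution), and (ii) confirming that the resulting polynomial $q$ genuinely defines a metric—i.e. that the convexity criterion of Lemma \ref{lem:Convexity} holds, that $\pol(r) = r^2 - q(r)$ is positive on some interval $[a, a+\epsilon)$ so the metric is defined on an actual disk bundle neighborhood of the zero section. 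Positivity of $\pol$ near $r=a$ should follow from $\pol(a) = a^2 - q(a) = 0$ together with $\pol'(a) = 2a - q'(a) = 2a - (2-m)a = ma > 0$ (from equation (\ref{eq:conditions for closing smoothly in terms of p1})), which guarantees $\pol > 0$ just to the right of $a$; this is the clean geometric payoff that makes the disk-bundle claim work.

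**Wrapping up.** Once the unique coefficient tuple is identified and the positivity of $\pol$ near $r=a$ is established, the metric $g_{u_h}$ is well-defined on a disk bundle by Lemma \ref{lemma:smooth} (smoothness at the zero section) and Lemma \ref{lem:Convexity} (convexity, hence positive-definiteness). Bach-flatness holds by construction via Proposition \ref{prop:Conformal Einstein}. I would present the solution of the algebraic system explicitly enough to exhibit uniqueness, flag any sign or nondegeneracy hypothesis needed (in particular that $q_4 \neq 0$, or handling the constant-scalar-curvature degenerate case $q_4 = 0$ separately if it arises), and then invoke the lemmas to conclude that this data assembles into the asserted unique $U(2)$-invariant Bach-flat Kähler metric near the zero section.
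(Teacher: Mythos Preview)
Your proposal is correct and follows essentially the same route as the paper: reduce to the four algebraic conditions on $(q_0,q_1,q_3,q_4)$, solve the linear ones, feed into the Bach-flat relation, and use $\pol(a)=0$, $\pol'(a)=ma>0$ for local positivity. The paper resolves your anticipated obstacle (i) exactly as you hope---after substituting the smoothness conditions, the equation $q_3q_1=q_4q_0$ can be completed to a square in $s(a)=2q_3+aq_4$ and becomes \emph{linear} in the remaining unknown $aq_4+q_3$, so uniqueness is immediate with no spurious branches.
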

\begin{proof}
	{This is very much like the proof of existence for extremal metrics on $\bbH_m.$ Just as in that setting, such an extremal metric comes from a polynomial 
	$$q(r)=q_0+q_1r+\frac{q_3}{6}r^3+\frac{q_4}{12}r^4.$$
	Let $g$ be the extremal K\"ahler metric determined by $q$ as in Proposition \ref{prop:scalar curvature and extremal metrics}.}
	We need not impose the smoothness condition at $x_1+x_2=b$ and instead impose it only  at $x_1+x_2=a$ as in condition (\ref{eq:conditions for closing smoothly in terms of q1}). We have that
	
	\begin{equation}\label{eq:conditions for closing smoothly 2}
		\begin{aligned} 
			q(a) & = a^2 \\ 
			q'(a) &= (2-m) a  . 
		\end{aligned}
	\end{equation}
	
	which can be easily solved for $q_0$, $q_1$ in terms of the remaining coefficients as
	
	\begin{equation}\label{eq:smoothness conditions on q_0 and q_1}
		\begin{aligned}
			q_{{0}} & ={\frac {q_{{4}}{a}^{4}}{8}}+{\frac {q_{{3}}{a}^{3}}{3}}+{a}^{2
			}(m-1) \\
			q_{{1}} & =-{\frac {q_{{4}}{a}^{3}}{6}}-{\frac {q_{{3}}{a}^{2}}{2}}-a(m-2).
		\end{aligned}
	\end{equation}
	
	Furthermore, imposing the Bach-flatness condition, $q_3 q_1  = q_4 q_0$ yields the equation
	$${q_{{4}}}^{2}{a}^{3}+4\,q_{{4}}q_{{3}}{a}^{2}+4\,a{q_{{3}}}^{2}+8
	\left( m-1 \right) q_{{4}}a+8\,q_{{3}} \left( m-2 \right) =0.
	$$
	By completing the square, this can be written as
	$$ a ( aq_4 + 2q_3 )^2 - 8(aq_4+2q_3) + 8 m( aq_{{4}} + q_{{3}}) =0.
	$$
	Notice that $aq_4+2q_3=s(a)$ is the scalar curvature at the $\T^2$-invariant $2$-sphere located at $r=a$. We solve this equation for $aq_4+q_3$ in terms of $s(a)$. This reads
	\begin{align*}
		aq_{{4}} + q_{{3}} &  =  s(a) \frac{1- \frac{a s(a)}{8}}{m} \\
		aq_{{4}} + 2q_{{3}} &  =  s(a) ,
	\end{align*}
	and can be inverted to
	\begin{equation}\label{eq:q_3 q_4 from s(a)}
		\begin{aligned}
			q_{{3}} &  =  s(a) \frac{a s(a) +8 (m-1) }{8m} \\
			q_{{4}} &  =  -\frac{s(a)}{a} \frac{a s(a) +4 (m-2) }{4m} .
		\end{aligned}
	\end{equation}
	Now, we insert equation (\ref{eq:q_3 q_4 from s(a)}) for $q_3$ and $q_4$ in terms of $a$ and $s(a)$, into equation (\ref{eq:smoothness conditions on q_0 and q_1}) for $q_0$ and $q_1$ in terms of $q_3,q_4$. We find that:
	\begin{equation}\label{eq:q's for local metrics}
	\begin{aligned}
		q_0 & = \frac {{a}^{2} \left( as(a)+8\,m-8 \right)  \left( as(a)+12\,m \right) }{96
			\,m}, \\
		q_1 & = -\frac {a \left( as(a)+4\,m-8 \right)  \left( as(a)+12\,m \right) }{48\,m}, \\
		q_2 & = 0, \\
		q_3 & = \frac {s(a) \left( as(a)+8\,m-8 \right) }{8\,m}, \\
		q_4 & = -\frac {s(a) \left( as(a)+4\,m-8 \right) }{4\,am}.
	\end{aligned}
	\end{equation}
	Using these formulae, we shall consider the polynomial $\frac{r^2-q(r)}{a^2}$ which we write in terms of $t=\frac{r}{a}$ as $\tilde{\pol}(t)=\frac{(at)^2-q(at)}{a^2}$. This results in the polynomial
	\begin{align*}
		\tilde{\pol}(t) & =-{\frac { \left( as(a)+8\,m-8 \right)  \left( a s(a)+12\,m \right) }{96\,m}}
		+{\frac { \left( a s(a) +4\,m-8 \right)  \left( a s(a)+12\,m \right) }{48\,m}}t
		+{t}^{2} \\
		& \ \ \ \ -{\frac {a s(a) \left( a s(a)+8\,m-8 \right) }{48\,m}}{t}^{3}+{\frac {
				a s(a) \left( a s(a)+4\,m-8 \right) }{96\,m}}{t}^{4}.
	\end{align*}
	
	Recall from Lemma \ref{lem:Convexity} that the positiveness of the polynomial {$\pol(t)$ for $t>a$} is equivalent to that of the metric $g$. By writing $\tilde{\pol}$ in this way, in terms of $t$ and after dividing by $a^2,$ emphasises the scale invariance of the quantity $as(a)$. Indeed, writing $\tilde{\pol}(t)$ in terms of $y=a s(a)$ gives
	\begin{align*}
		\tilde{\pol}(t) & = \frac{y^2}{96 m} \left( (t-1)^3 (t+1)  \right) +  \\
		& \ \ \ + \frac{4y}{96 m} \left(  (t-1)^2 ((m-2)t^2 -2mt-5m+2)  \right) + \left( (t-1) (t+m-1) \right) .
	\end{align*}
	Furthermore, the smoothness conditions (\ref{eq:conditions for closing smoothly 2}) can be translated into $\tilde{\pol}(1)=0$ and $\tilde{\pol}'(1)=m>0$ and can be immediately checked from the formula above. In particular, this shows that $\tilde{\pol}$ is positive in $(0, \epsilon)$ for some positive $\epsilon$.
\end{proof}

\begin{remark}[The metrics from Theorem \ref{thm:Local existence and uniqueness} form a real $1$-parameter family of non-isometric and non-homothetic metrics]
	Suppose, for the rest of this remark that $s(a) \neq 0$ and let $\M$ be the moduli space of Bach-flat K\"ahler metrics we construct with $s(a) \neq 0$. We will keep our discussion informal in that we do not discuss topological issues. Consider the map $f:\M \to \R^2$ given by
	$$f(g)=(a^{-1},s(a)).$$ 
	Notice that $as(a)$ is invariant by scaling the metric so this map is homogeneous of with respect to the $\mathbb{R}^+$-action on $\M$ arising from scaling. Furthermore, $a$ is the volume of the unique $\bbT^2$-invariant compact divisor and $s(a)$ the scalar curvature of the ambient metric at such a divisor. This shows that the map $f$ is well defined and in fact, it descends to a map
	$$[f]: \M /\mathbb{R}^+ \to \mathbb{RP}^1 ,$$
	where $\mathbb{R}^+$ acts on $\M$ by scaling. Furthermore, the image of $[f]$  is open as for any given $(a,s)$ with $s\neq 0$, we can keep $a$ fixed and vary $s(a)$ is a neighbourhood of $s$. 
	We therefore conclude that our construction gives a continuous $1$-parameter family of non-isometric and non-homothetic Bach-flat K\"ahler metrics. 
\end{remark}

\begin{remark}[Local existence for extremal K\"ahler metrics]
	The same proof, without imposing the condition $q_3q_1=q_4q_0$ yields a local classification of $U(2)$-invariant extremal K\"ahler metrics in a neighbourhood of the zero section in $\mO(-m)$.
	
	For such metrics we need only impose the smoothness conditions \ref{eq:conditions for closing smoothly in terms of q1} which results in equations \ref{eq:smoothness conditions on q_0 and q_1} determining $q_0$ and $q_1$ from $a$, $q_3$ and $q_4$. We therefore find that these metrics are parameterised by three real parameters or two if we factor out scaling. 
\end{remark}

\subsection{Vanishing locus of the scalar curvature}

As we will be interested in applying Derdzi\'nski's theorems \ref{Thmderd1} and \ref{Thmderd2}, we want to study the zeroes of the scalar curvature of the Bach-flat K\"ahler metrics we have just constructed. We prove the following which will be useful later on. 
\begin{lemma}[Vanishing locus of the scalar curvature]\label{lem:Vanishinh of scalar curvature}
	Let $m \in \N$, $a>0$, $s(a) \in \R$ and ${g}$ be the associated Bach-flat K\"ahler metrics from Theorem \ref{thm:Local existence and uniqueness}. As a function of $r=x_1+x_2$, its scalar curvature is given by
	\begin{align*}
		\scal(g)(r) & = \frac{s(a)}{4m} \left(  a s(a) +8 (m-1) - (a s(a) +4 (m-2)) \frac{r}{a}    \right).
	\end{align*}
	From this formula it follows that:
	\begin{itemize}
		\item If $s(a)=0$, then $\scal(g)\equiv0.$
		\item If $as(a) +4 (m-2)=0$, then $\scal(g)\equiv s(a)$ is constant for all $r$.
		\item If $s(a) \neq 0$ and $a s(a) +4 (m-2) > 0$, the scalar curvature vanishes at
		$$\frac{r}{a}= \frac{a s(a) +8 (m-1) }{a s(a) +4 (m-2)} = 1 + \frac{4m}{a s(a) +4 (m-2)}$$
		if the metric is defined there.
		\item If $s(a) \neq 0$ and $a s(a) +4 (m-2)<0$, the scalar curvature never vanishes.
	\end{itemize}
\end{lemma}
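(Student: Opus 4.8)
The plan is simply to substitute the explicit coefficients into the extremal scalar curvature formula and then read off the zero of the resulting affine function. Recall from Theorem \ref{prop:scalar curvature and extremal metrics} that for an extremal metric arising from a quartic $q$ with vanishing quadratic term, the scalar curvature is the affine function $\scal(g)(r) = 2q_3 + q_4 r$. For the Bach-flat metrics of Theorem \ref{thm:Local existence and uniqueness}, the coefficients $q_3$ and $q_4$ are pinned down by equation (\ref{eq:q's for local metrics}) in terms of $a$ and $s(a)$, so the first step is to insert those values.

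Second, I would factor out the common prefactor $\frac{s(a)}{4m}$ and rewrite $8m-8 = 8(m-1)$ and $4m-8 = 4(m-2)$. This immediately yields the claimed expression $\scal(g)(r) = \frac{s(a)}{4m}\big(as(a)+8(m-1) - (as(a)+4(m-2))\tfrac{r}{a}\big)$, reducing everything to the analysis of an explicit affine function of $r$.

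Third, the four bullet points become a routine case analysis. The cases $s(a)=0$ and $as(a)+4(m-2)=0$ are exactly where the prefactor or the slope vanishes; for the latter one substitutes $as(a)=-4(m-2)$ into the constant term $as(a)+8(m-1)$, obtaining $4m$, which cancels the $4m$ in the denominator to give the constant value $s(a)$. When both $s(a)\neq 0$ and $as(a)+4(m-2)\neq 0$, the function is genuinely affine with a single zero at $\frac{r}{a} = \frac{as(a)+8(m-1)}{as(a)+4(m-2)}$; rewriting the numerator as $(as(a)+4(m-2))+4m$ produces the advertised form $1 + \frac{4m}{as(a)+4(m-2)}$.

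Fourth, and this is the only place where any care is needed, I would recall that the metric lives over $P_\mO$, so the relevant range is $r \geq a$, i.e. $t = r/a \geq 1$. The sign of $as(a)+4(m-2)$ controls whether the zero $t = 1 + 4m/(as(a)+4(m-2))$ lies in this range: when it is positive the zero satisfies $t > 1$ and is attained provided the metric extends that far, whereas when it is negative the zero satisfies $t < 1$ and falls outside the domain, so the scalar curvature never vanishes where the metric is defined. There is no genuine conceptual obstacle here; the computation is elementary once the coefficients from Theorem \ref{thm:Local existence and uniqueness} are in hand, and the only subtlety is keeping track of the domain constraint $r \geq a$ that distinguishes the last two cases.
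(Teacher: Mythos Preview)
Your proposal is correct and follows essentially the same approach as the paper: substitute the explicit values of $q_3$ and $q_4$ into $\scal(g)(r)=2q_3+q_4 r$, factor, and then do the case analysis on the resulting affine function. The paper cites equation (\ref{eq:q_3 q_4 from s(a)}) rather than (\ref{eq:q's for local metrics}), but these give the same $q_3,q_4$, and your justification of the last bullet via the domain constraint $r/a\geq 1$ is exactly what the paper does as well.
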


\begin{proof}
	Recall that the scalar curvature can be written as $\scal(g)(r)=2q_3 + q_4 r$ and inserting the formulas for $q_3$ and $q_4$ found in equation \ref{eq:q_3 q_4 from s(a)} gives
	\begin{align*}
		\scal(g)(r) & = s(a) \frac{a s(a) +8 (m-1) }{4m} - s(a) \frac{a s(a) +4 (m-2) }{4m} \frac{r}{a} \\
		& =  \frac{s(a)}{4m} \left(  a s(a) +8 (m-1) - (a s(a) +4 (m-2)) \frac{r}{a}    \right).
	\end{align*}
	Using this formula, we find that if $s(a)=0$, then $\scal(g)\equiv0,$ while if $s(a) = -\frac{4}{a} (m-2)$, then 
	$$\scal(g)(r)= \frac{s(a)}{4m} ( a s(a) +8 (m-1) ) = s(a), $$
	and is therefore constant for all $r$.
	
	Finally, if $s(a) \neq 0$ and $a s(a) +4 (m-2) \neq 0$, then the function $\scal(g)$ vanishes at
	$$\frac{r}{a}= \frac{a s(a) +8 (m-1) }{a s(a) +4 (m-2)} = 1 + \frac{4m}{a s(a) +4 (m-2)}.$$
	In particular, if $a s(a) +4 (m-2)<0$, this is smaller than $1$ and so the scalar curvature cannot vanish.
\end{proof}

\section{Globalising the local extremal K\"ahler metrics}\label{sec:localglobal}

In this section, our goal is to extend the metrics from Proposition \ref{thm:Local existence and uniqueness} to their maximal domain of definition and understand their global properties. 

In \ref{ss:Globalizing} we shall state and prove the main globalisation/classification result. Then, in the remaining of this section, i.e. in \ref{ss:Cone angle}, \ref{ss:cscK}, \ref{ss:Incomplete} we give examples of the globalisation procedure by explicitly describing some of the resulting metrics and their properties.

\subsection{Classification result}\label{ss:Globalizing}

This subsection is dedicated to showing the following classification of $U(2)$-invariant, extremal K\"ahler metrics.

\begin{thm}\label{prop:Globalizing} 
	Let $a>0$ and let $q(r)$ be given by
	$$q(r)=q_0+q_1r+\frac{q_3}{6}r^3+\frac{q_4}{12}r^4.$$ Assume that $q$ satisfies the boundary conditions (\ref{eq:conditions for closing smoothly in terms of q1}).
	Let $g$ be the extremal K\"ahler metric determined by $q$ as in Theorem \ref{prop:scalar curvature and extremal metrics}.
	Let $\pol(r)$ be the polynomial defined by $\pol(r)=r^2-q(r)$ .
	\begin{itemize}
		\item[(i)] If $\pol(r)$ has no zero greater than $a$, then the metric $g$ from Theorem \ref{thm:Local existence and uniqueness} is defined for all $r \geq a$ and is either:
		\begin{itemize}
			\item[(i.a)] Incomplete if $\pol(r)$ has degree greater than $3$.
			\item[(i.b)] Complete if $\pol(r)$ has degree at most $3$. In this case, the metric has quartic volume growth if the degree is $2$ and exponential volume growth if the degree is $3$.
		\end{itemize}
		
		\item[(ii)] On the other hand, if instead $\pol(r)$ has a zero at some point $b>a, $ and no other zero in $[a,b],$ either:
		\begin{itemize}
			\item[(ii.a)] The zero has multiplicity one, in which case $g$ compactifies as a metric on the total space of $\mathbb{H}_m$ with a cone angle $\beta=-\frac{\pol'(b)}{mb}$ along the divisor at infinity $\lbrace r=b \rbrace$. 
			\item[(ii.b)] The zero has multiplicity greater than one, in which case the metric has a complete end as $r \to b$ and finite volume.
		\end{itemize}
	\end{itemize}
	Furthermore, any $U(2)$-invariant extremal K\"ahler metric is one of these.
\end{thm}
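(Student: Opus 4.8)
The plan is to establish the \emph{exhaustiveness} clause by showing that the theorem's hypotheses capture, without loss, every $U(2)$-invariant extremal K\"ahler metric closing up at a zero section, and that the dichotomy on the zeros of $\pol$ beyond $a$ is then complete. First I would invoke the structural reduction already in place: by the description of $U(2)$-invariant toric K\"ahler metrics via the symplectic potential (\ref{eq:symplectic potential}) and by Calabi's ansatz, Theorem \ref{prop:scalar curvature and extremal metrics}, any such extremal metric is encoded by a single polynomial $q(r)$ of degree at most $4$ with $q_2 = 0$, through $h''(r) = q(r)/(r(r^2 - q(r)))$, together with the value $a > 0$ locating the minimal $\bbT^2$-invariant $2$-sphere (the zero section of $\mO(-m)$, intrinsically the symplectic volume of that divisor). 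Thus the metric is one of the $g_{u_h}$ considered throughout, and $\pol(r) = r^2 - q(r)$ is a well-defined polynomial of degree at most $4$.

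Next I would pin down the domain of definition. Smoothness of the metric across the zero section forces the closing conditions (\ref{eq:conditions for closing smoothly in terms of q1}), equivalently $\pol(a) = 0$ and $\pol'(a) = am > 0$ in the form (\ref{eq:conditions for closing smoothly in terms of p1}); in particular $a$ is a simple zero of $\pol$ and $\pol > 0$ on some interval $(a, a + \varepsilon)$. By the convexity criterion Lemma \ref{lem:Convexity}, $g_{u_h}$ is a genuine (positive-definite) metric precisely where $\pol > 0$, so its maximal domain in $r$ is $[a, R)$ with $R = \inf\{ r > a : \pol(r) = 0\}$, where $R = +\infty$ if $\pol$ has no zero beyond $a$.

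The exhaustiveness then follows from a clean dichotomy on $R$. If $R = +\infty$ we are in case (i), and the behavior is governed by $\deg \pol$: since $\pol$ must stay positive, its leading coefficient is positive, which forces $\deg \pol \le 3$ precisely when the metric is complete (quartic or exponential volume growth according to degree $2$ or $3$), and $\deg \pol = 4$ precisely when it is incomplete, exactly sub-cases (i.a) and (i.b) as analyzed in the body. If $R = b < +\infty$ we are in case (ii), with $b$ the first zero of $\pol$ after $a$, so $\pol > 0$ on $(a,b)$ and the only zeros in $[a,b]$ are the endpoints; the multiplicity of $b$ then distinguishes the cone-angle compactification to $\bbH_m$ (simple zero, so $\pol'(b) < 0$ and $\beta = -\pol'(b)/(mb) > 0$, sub-case (ii.a)) from the complete finite-volume end (higher multiplicity, sub-case (ii.b)). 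As these alternatives are mutually exclusive and cover all admissible $q$, every such metric is one of (i.a)--(ii.b).

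The main obstacle I anticipate lies in the first step: justifying that \emph{every} $U(2)$-invariant extremal K\"ahler metric on these spaces really does arise from the ansatz (\ref{eq:symplectic potential}) with a well-defined $a$ and the stated half-line moment polytope. This rests on the toric classification (symplectic potentials parametrize compatible toric K\"ahler metrics), on the Delzant-type normalization fixing the polytope of $\mO(-m)$ up to the scale $a$, and on the observation that $U(2)$-invariance collapses the potential to a function of $r = x_1 + x_2$ alone. Once the encoding by the pair $(a, q)$ is secured, the remaining steps are formal consequences of Lemmas \ref{lemma:smooth} and \ref{lem:Convexity} together with the case analysis already carried out.
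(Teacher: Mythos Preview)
Your proposal addresses only the organizational ``Furthermore'' clause and the dichotomy on $R = \inf\{r>a : \pol(r)=0\}$, but it does not prove the substantive claims that make up items (i.a), (i.b), (ii.a), (ii.b). You write that the degree of $\pol$ ``forces $\deg\pol\le 3$ precisely when the metric is complete \ldots\ exactly sub-cases (i.a) and (i.b) as analyzed in the body,'' but this \emph{is} the body: Theorem \ref{prop:Globalizing} is where these facts are established, not assumed. The same applies to the volume-growth dichotomy, the cone-angle formula $\beta = -\pol'(b)/(mb)$, and the finite-volume complete end in (ii.b).

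What the paper actually does here is an explicit geodesic-ray computation: along the diagonal $x_1 = x_2 = \ell/2$ one pulls back the metric to obtain $\curv^*g = \ell\, d\ell^2 / \pol(\ell)$, so the length to infinity behaves like $\int^\infty \ell^{-(\alpha-1)/2}\,d\ell$ with $\alpha = \deg\pol$, finite exactly when $\alpha>3$. Completeness for $\alpha\le 3$ then follows because any unbounded geodesic must cross every level set $S_\ell$. The volume growth is read off from the symplectic volume form $dx_1\wedge dx_2\wedge d\theta_1\wedge d\theta_2$ together with the distance estimate just obtained. For (ii.b), the same ray computation with $\pol(\ell)\sim (b-\ell)^\alpha$, $\alpha\ge 2$, gives infinite length as $\ell\to b$, hence a complete end, while the volume is $2\pi^2(b^2-a^2)$ and thus finite. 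None of this appears in your outline; without it the theorem is not proved.
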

\begin{proof}[Proof of part (i)]
	We start by analysing the case in which $\tilde{\pol}(t)$ has no zero greater than $1$. Then, the coefficient of $t^4$ must be nonnegative. Furthermore, the corresponding metric can be written as
	$$g= \sum_{i,j=1}^2 \left( u_{ij} dx_i dx_j + u^{ij} d \theta_i d \theta_j \right),$$
	with $u$ given in terms of $q$ via equation (\ref{eq:symplectic potential}) and Remark \ref{rem:Different parametrizations}. We can consider the geodesic ray parameterised by $\ell \in (\ell_0 , +\infty)$ and given by $\ell \mapsto \curv(\ell)$ with $x_1(\curv(\ell))=\frac{\ell}{2}=x_{{2}}(\curv(\ell))$ and both $\theta_i(\curv(\ell))$ constant for $i=1,2 $. Then, we find that
	\begin{align*}
		\curv^*g & = \sum_{i,j=1}^2 ( u_{ij} ) \frac{d\ell^2}{4} \\
		& = \frac{1}{4} \left(\frac{1}{x_1} + \frac{1}{x_2} + \frac{4f(\ell)}{1-\ell f (\ell)}\right) \Big\vert_{x_1=\frac{\ell}{2}=x_2} d\ell^2 \\
		& = \frac{1}{4} \left(\frac{4}{\ell}  + \frac{1}{\ell} \frac{4q(\ell)}{\ell^2 - q (\ell)}\right)  d\ell^2 \\
		& = \frac{\ell d\ell^2}{\ell^2 - q (\ell)}   \\
		& =\frac{\ell d\ell^2}{ \pol(\ell)}.
	\end{align*}
	Now, $\pol(\ell)$ has degree $\alpha \geq 2$ and is positive for $\ell > \ell_0$ for some fixed $l_0$. Therefore, we find that 
	\begin{align}\label{incomplete}
		\mathrm{Length}(\curv) & = \int_{\ell_0}^{+\infty}  \sqrt{\frac{\ell}{ \pol(\ell)}} d \ell \sim \int_{\ell_0}^{+\infty} \ell^{-\frac{\alpha-1}{2}} d \ell ,
	\end{align}
	and this is finite if and only if $\alpha-1>2$. Thus proving that $g$ is incomplete when $\alpha>3$.
	
	Let $S_\ell$ be the subset of our manifold given by the pre-image via $T_m$ composed with the moment map of the set 
	$$\{(x_1,x_2): x_1,x_2>0, x_1+x_2 \geq a, x_1+x_2=\ell\}$$ and $\cS_\ell$ be the subset of our manifold given by the pre-image via $T_m$ composed with the moment map of the set 
	$$\{(x_1,x_2): x_1,x_2>0, a\leq x_1+x_2\leq\ell\}.$$ 
	In particular, we have $\partial \cS_\ell =S_\ell$.
	
	We now show that, if $\alpha\leq 3$ the metric is complete. Indeed, in that case we found that $\mathrm{Length}(\curv)=+\infty$ so that $\mathrm{dist}(S_\ell, S_{\ell_0}) \to +\infty$ as $\ell \to + \infty$. Any unbounded geodesic crosses any hypersurfaces $S_\ell $ for $\ell$ large enough.  Hence, when $\alpha \leq 3$ unbounded geodesics have infinite length.
	
	We finalize the argument leading to the proof of the first item by proving the assertion regarding the volume growth of these metrics. In all cases the volume form is $\frac{\omega^2}{2}= dx_1 \wedge dx_2 \wedge d\theta_1 \wedge d\theta_2$ and so the volume of the region $\cS_\ell =\lbrace r \leq \ell \rbrace$ is given by
	$$\mathrm{Vol}(\cS_{\ell}) = \int_0^{2\pi} d \theta_1 \int_0^{2\pi} d\theta_2 \int_{a \leq x_1 + x_2 \leq \ell} dx_1 \wedge dx_2 = \frac{(2\pi)^2}{2} \left( \ell^2 -a^2 \right). $$
	On the other hand, our previous computation shows that the distance $d$ to the zero section $S_a$ satisfies
	$$c_1  \sqrt{\ell}\leq d\leq c_2  \sqrt{\ell}, \ \text{if $\alpha =2$},$$
	and 
	$$c_1  \log{\ell}\leq d\leq c_2  \log{\ell}, \ \text{if $\alpha =3$,}.$$ 
	for positive constants $c_1$ and $c_2.$ This implies
	$$\cS_{ \lambda_1 R^2}\subset B_R\subset \cS_{ \lambda_2 R^2}, \ \text{if $\alpha =2$},$$
	and 
	$$\cS_{ \lambda_1 e^R}\subset B_R\subset \cS_{ \lambda_2 e^R}, \ \text{if $\alpha =3$},$$
	for constants $\lambda_1$ and $\lambda_2.$
	Inserting in the formula for the volume of the ball of radius $R$ to some fixed point in $S_a,$ $B_R$ we find that
	$$\mathrm{Vol}(B_R) \sim R^4, \ \text{if $\alpha =2$},$$
	and 
	$$\mathrm{Vol}(B_R) \sim e^{2R} , \ \text{if $\alpha =3$}.$$
\end{proof}
	
\begin{proof}[Proof of part (ii)]
	Finally, suppose that $\tilde{\pol}(t)$ attains a zero at some $t_1 > 1$ and that this has multiplicity one (the case of multiple multiplicity will be analysed later in the proof). Then, we must have $\tilde{\pol}(t_1)=0$ and $\tilde{\pol}'(t_1)\neq 0$. Equivalently, we have $b=t_1 a>a$ such that $\pol(b)=0$ and $\pol'(b)<0$. Hence, there is $p>0$ such that $\pol'(b)=-\frac{mb}{p} $ and so $g$ determines a metric with a cone angle $\beta=2\pi p^{-1}$ along the divisor $S_b$.
	
	If the zero $b$ of $\pol(r)$ has multiplicity greater than one, there is $\alpha\geq 2$ such that $\pol(r)/(b-r)^\alpha$ is smooth for $\ell \leq b$ and close to $b$. Consider the same geodesic ray $c$ as in the first part of the proof. The same computation as before gives that
	\begin{align}
		\mathrm{Length}(\curv) & \sim \int_{\ell_0}^{b} \sqrt{\frac{\ell }{ \pol(\ell)}}d \ell=+\infty ,
	\end{align}
	because 
	\begin{align}
	 \int_{\ell_0}^{b} \frac{ d \ell}{\sqrt{(b-\ell)^{\alpha}}} =+\infty,
	 \end{align}

	 and, as in the previous case, this allows us to conclude the metric is complete since any unbounded geodesic must cross $S_\ell$ for all $\ell$ large enough. The case of a triple root is similar. Furthermore, the same computation as in the proof of part (i) shows that the volume of the resulting metric is $2\pi^2 (b^2-a^2)$ and therefore finite.
\end{proof}

\subsection{Examples of cone angle metrics}\label{ss:Cone angle}
In this subsection we shall focus on case (ii.b) from Theorem \ref{prop:Globalizing}. Some of the metrics we have constructed yield metrics on $\bbH_m$ having a cone angle singularity along the divisor at infinity. Such metrics have played an important role in K\"ahler geometry as they arise in continuity methods for establishing existence of K\"ahler-Einstein or cscK metrics (see \cite{dca}). Here, they appear naturally in a family for certain choices of $(a,s(a))$ with varying cone angles. In this subsection we characterise the values of the parameters yielding cone angle metrics on $\bbH_m$ and we point out that some of our Bach-flat metrics with a cone angle on $\bbH_m$ are actually K\"ahler-Einstein. We also study the vanishing of the scalar curvature of the $U(2)$-invariant, Bach-flat metrics we discuss with an eye on applying Derdzi\'nki's results.

\begin{cor}[Metrics on $\mathbb{H}_m$ with a cone angle]\label{cor:(ii)}
	{Let $a>0$ and $s(a)\in \bbR^n.$ Let $g$} be the Bach-flat K\"ahler metric from Theorem \ref{thm:Local existence and uniqueness} associated to $(a,s(a))$. The following statements hold:
	\begin{itemize}
		\item If $m=1$ and $a s(a) \in (0,4]$, then $\scal(g)$ is positive and the metric on $\mathcal{O}(-1)$ compactifies to a metric on $\mathbb{H}_1$ with a cone angle along the divisor at infinity.  
		
		If $a s(a)=4$, then $g$ is Einstein with positive scalar curvature and cone angle along the divisor at infinity $\beta=2\pi\left(\sqrt{3}-1\right)$.
		
		\item If $m \geq 3$ and $a s(a) \in \left( - 4(m-2) , 0 \right)$, then the metric on $\mathcal{O}(-m)$ compactifies to a metric on $\mathbb{H}_m$ with a cone angle in the divisor at infinity. 

	\end{itemize} 
\end{cor}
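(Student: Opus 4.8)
The plan is to reduce the whole statement to the location and multiplicity of the first zero of the polynomial $\tilde{\pol}$ beyond $t=1$ and then to invoke Theorem \ref{prop:Globalizing}. Recall from the proof of Theorem \ref{thm:Local existence and uniqueness} that, in the scale-invariant variable $t=r/a$ and with $y:=as(a)$,
\begin{equation*}
\tilde{\pol}(t)=\frac{y^2}{96m}(t-1)^3(t+1)+\frac{4y}{96m}(t-1)^2\big((m-2)t^2-2mt-5m+2\big)+(t-1)(t+m-1),
\end{equation*}
with $\tilde{\pol}(1)=0$ and $\tilde{\pol}'(1)=m$. Since under $r=at$ the zeros of $\tilde{\pol}$ in $(1,\infty)$ correspond to the zeros of $\pol(r)=r^2-q(r)$ in $(a,\infty)$, it suffices to exhibit a simple zero $t_1>1$ with $\tilde{\pol}>0$ on $(1,t_1)$: case (ii.a) of Theorem \ref{prop:Globalizing} then yields precisely the asserted compactification on $\mathbb{H}_m$ with a cone angle along the divisor at infinity $\{r=b\}$, $b=t_1a$.

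First I would factor out the known root, writing $\tilde{\pol}(t)=(t-1)R(t)$ with $R(1)=\tilde{\pol}'(1)=m>0$ and leading coefficient $c_3=\tfrac{y(y+4(m-2))}{96m}$. A one-line sign check gives $c_3<0$ on both ranges — for $m=1$, $y\in(0,4)$ since $y>0>y-4$, and for $m\ge 3$, $y\in(-4(m-2),0)$ since $y<0<y+4(m-2)$ — with $c_3=0$ only at the endpoint $m=1,y=4$, where $R$ degenerates to a downward parabola. In every case $R(t)\to-\infty$ as $t\to+\infty$, so $R$ (positive at $t=1$) has a zero in $(1,\infty)$; letting $t_1$ be the smallest one, $R>0$ on $(1,t_1)$ and hence $\tilde{\pol}>0$ there, so by Lemma \ref{lem:Convexity} the metric is positive definite, thus well defined, on $a\le r<b:=t_1a$.

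The crux is to distinguish the cone-angle case (ii.a) from the complete finite-volume case (ii.b), i.e.\ to prove $t_1$ is a \emph{simple} zero (so that $\tilde{\pol}$ changes sign there rather than merely touching zero). The key is the clean identity
\begin{equation*}
R'(1)=3c_3+2c_2+c_1=1-\frac{y}{4},
\end{equation*}
obtained by collecting the coefficients of $R$. On both ranges $y<4$, so $R'(1)>0$ (strictly, away from the endpoint $m=1,y=4$). Because $R'$ opens downwards ($c_3\le0$), the inequality $R'(1)>0$ forces $t=1$ to lie strictly between the two critical points $\tau_1<\tau_2$ of $R$; hence on $(1,\infty)$ the function $R$ increases on $(1,\tau_2)$ and then decreases to $-\infty$. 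Since $R(1)=m>0$ we get $R>0$ on $(1,\tau_2]$, after which $R$ strictly decreases, giving a unique zero $t_1>\tau_2$ with $R'(t_1)<0$. Thus $t_1$ is a simple zero of $\tilde{\pol}$, and Theorem \ref{prop:Globalizing}(ii.a) produces a cone angle $\beta=2\pi/p$ with weight $p=-bm/\pol'(b)>0$ determined by $\pol'(b)=-bm/p$; note $\pol'(b)=a\,\tilde{\pol}'(t_1)<0$, so $\beta>0$.

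It remains to settle the $m=1$ claims. Positivity of the scalar curvature follows from Lemma \ref{lem:Vanishinh of scalar curvature}: for $m=1$ its formula reads $\scal(g)(r)=\tfrac{s(a)}{4}\big(y+(4-y)\,r/a\big)$, which for $y\in(0,4]$, $r\ge a$, is a positive multiple of a strictly positive quantity, so $\scal(g)>0$. At the endpoint $y=4$ one has $as(a)+4(m-2)=0$, so the same lemma gives $\scal(g)\equiv s(a)=4/a>0$ constant; the Derdzi\'nski conformal factor $\scal(g)^{-2}$ is then constant, so by Theorem \ref{Thmderd2} the metric $g$ itself — a constant multiple of the Einstein metric $\scal(g)^{-2}g$ — is Einstein, indeed K\"ahler–Einstein with positive scalar curvature. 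Finally, for the cone angle in this case I would specialise the computation: at $m=1,y=4$ one finds $R(t)=-\tfrac13(t^2-2t-2)$, a downward parabola with vertex at $t=1$, whose root beyond $1$ is $t_1=1+\sqrt3$; then $\pol'(b)=a\,\tilde{\pol}'(t_1)=-2a$ and $b=(1+\sqrt3)a$ give $\beta=-2\pi\,\pol'(b)/(mb)=2\pi\cdot\frac{2}{1+\sqrt3}=2\pi(\sqrt3-1)$, as claimed.
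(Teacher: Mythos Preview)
Your proof is correct and follows essentially the same approach as the paper: factor out $(t-1)$ from $\tilde{\pol}$, use the sign of the leading coefficient $c_3=\tfrac{y(y+4(m-2))}{96m}$ to force a root of the quotient $R$ beyond $t=1$, establish simplicity via the key identity $R'(1)=1-\tfrac{y}{4}>0$ together with critical-point counting, then read off the scalar-curvature and $y=4$ claims from Lemma~\ref{lem:Vanishinh of scalar curvature} and an explicit computation. Your phrasing of the simplicity step (``$R'$ opens downwards and is positive at $t=1$, so $t=1$ lies between the two critical points'') is slightly cleaner than the paper's version, which reaches the same conclusion by noting $R$ is decreasing for very negative $t$ and increasing at $t=1$, but the content is identical.
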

\begin{proof}
	Again, we continue to use the notation $y= a s(a)$ and write the polynomial $\pol(r)$ in terms of $\tilde{\pol}(t)$ where $t=\frac{r}{a}$. Recall that because $\tilde{\pol}(1)=0$ and $\tilde{\pol}'(1)=m>0$ we have that $\tilde{\pol}(t)$ is positive for $t>1$ and close to $1$. {We wish to determine, wether there is $t$ greater than $1$ for which $\tilde{\pol}(t)$ vanishes}. Given that $t-1$ is positive for $t>1$, we can instead consider 
	\begin{align*}
		\frac{\tilde{\pol}(t)}{t-1} & = \frac{y^2}{96 m} \left( (t-1)^2 (t+1)  \right) +  \\
		& \ \ \ + \frac{4y}{96 m} \left(  (t-1) ((m-2)t^2 -2mt-5m+2)  \right) + \left( t+m-1 \right) ,
	\end{align*}
	which we can also write as
	\begin{align*}
		\frac{\tilde{\pol}(t)}{t-1} & =  \frac {( y+12m )  ( y +8(m-1) )}{96m}  - \frac { \left( y+12m \right)  \left( y-8 \right) }{ 96m} t \\ 
		& \ \ \ \ -{\frac {y \left( 12m+y-8 \right) }{96m}}{t}^{2}+{\frac {y \left( y+4(m-2) \right) }{96m}}{t}^{3}.
	\end{align*}
	Therefore, if $y \left( y+4(m-2) \right) <0$, i.e.
	$$y \in \left(\min \lbrace 0, -4(m-2) \rbrace , \max \lbrace 0, -4(m-2) \rbrace\right),$$
	$\tilde{\pol}(t)$ crosses zero for some $t_{m,y}>1$ and so the metric is only defined on {the pre-image via $T_m$ composed with the moment map  of $\{(x_1,x_2):x_1,x_2>0, a<x_1+x_2<t_{m,y}\}$}. However, in order to apply Theorem \ref{prop:Globalizing} to prove the claimed statement in (ii.a) we must prove that $t_{m,y}$ has multiplicity one as a root of $\tilde{\pol}(t)$, or equivalently as a root of $\frac{\tilde{\pol}(t)}{t-1}$. We claim that this is a consequence of the fact that
	\begin{align*}
		\frac{d}{dt} \left( \frac{\tilde{\pol}(t)}{t-1} \right) \Big\vert_{t=1} = 1-\frac{y}{4} >0 , \ \text{for $y <4 \leq \max \lbrace 0, -4(m-1) \rbrace$.}
	\end{align*}
	Indeed, $\frac{\tilde{\pol}(t)}{t-1}$ is a degree $3$ polynomial and so has at most two critical points. One must be smaller than $1$ as in this regime $\frac{\tilde{\pol}(t)}{t-1}$ is decreasing for very small $t$ and increasing at $t=1$ by the previous computation. There is another critical point in the interval $[1,t_{m,y}]$ as $\frac{\tilde{\pol}(t)}{t-1}$ is positive and increasing at $t=1$ while it is decreasing for $t=t_{m,y}-\epsilon$ and $\epsilon \ll 1$. As a consequence, $t_{m,y}$ itself cannot be a third critical point.
	
	As for the statement regarding the scalar curvature, we know from Lemma \ref{lem:Vanishinh of scalar curvature} that if $y+4(m-2)<0$ the scalar curvature never vanishes. Notice however that this condition together with $y \in \left(\min \lbrace 0, -4(m-2) \rbrace , \max \lbrace 0, -4(m-2) \rbrace\right)$ implies $m=1$ and $y \in (0,4)$.
	
	We are still missing the case $m=1$ and $a s(a)=4$. In this case we find that 
	$$\tilde{\pol}(t)=- \frac{t-1}{3} \left( t^2-2t-2 \right),$$
	which for $t \in [1,1+\sqrt{3}]$ is nonnegative thus defining a metric on {the pre-image via $T_m$ composed with the moment map  of $\{(x_1,x_2):x_1,x_2>0, a<x_1+x_2<b\}$}. To compute the cone angle at $r=b$ we use the formula in (ii.b) of Theorem \ref{prop:Globalizing} which gives  $p=-\frac{mb}{p'(b)}=\frac{1}{\sqrt{3}-1}$. 
	Also, in this case we find that $q_4=0$ so $a\scal(g)(r)=2aq_3 = as(a) =4 $ and the scalar curvature is constant. From Derdzi\'nski's theorem \ref{Thmderd2}, $(\scal(g))^{-2} g$ is Einstein which, as $\scal(g)$ is constant, implies that $g$ itself is Einstein.
\end{proof}

\subsection{Examples of complete cscK metrics}\label{ss:cscK}
In this subsection we shall turn our attention to case (i.b) from Theorem \ref{prop:Globalizing}. This specific parameter choice for $(a,s(a))$ yields constant scalar curvature complete metrics on the total space of $\mO(-m).$ Toric scalar-flat metrics on $\mathcal{O}(-m)$ are completely understood (see \cite{s}) and classified. The metrics we are discussing here are special among those since they have an extra $U(2)$ symmetry. However, little is known about non scalar-flat cscK metrics in the same setting. We derive some information about the asymptotic behaviour near infinity. 
\begin{cor}[Complete metrics on $\mathcal{O}(-m)$ with constant scalar curvature]\label{cor:scalar flat}
	{Let $a>0$ and $s(a)\in \bbR^n.$ Let $g$}  be the Bach-flat K\"ahler metric from Theorem \ref{thm:Local existence and uniqueness} associated to $(a,s(a))$. Then, the following hold:
	\begin{itemize}
		\item If $m \geq 1$ and $a s(a)=0,$ the metric $g$ on $\mathcal{O}(-m)$ is complete and scalar-flat. 
		
		\item If $m \geq 3$ and $a s(a) =-4(m-2),$ then $g$ is complete and K\"ahler--Einstein. 
	\end{itemize}
	In particular, these metrics have quartic volume growth in the scalar-flat case and exponential volume growth in the negative Einstein case. 
\end{cor}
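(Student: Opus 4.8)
The plan is to reduce both statements to the classification in Theorem \ref{prop:Globalizing} by evaluating the polynomial $\tilde{\pol}(t)$ explicitly at the two prescribed parameter values $y := as(a) \in \{0, -4(m-2)\}$, and to obtain the scalar-curvature conclusions directly from Lemma \ref{lem:Vanishinh of scalar curvature} together with Derdzi\'nski's Theorem \ref{Thmderd2}.

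First I would treat the scalar-flat case $y = 0$. Since $a>0$ this forces $s(a)=0$, so Lemma \ref{lem:Vanishinh of scalar curvature} gives $\scal(g) \equiv 0$ immediately. Substituting $y=0$ into the expression for $\tilde{\pol}(t)$ from the proof of Theorem \ref{thm:Local existence and uniqueness} kills the first two summands and collapses it to $\tilde{\pol}(t) = (t-1)(t+m-1)$. Its only roots are $t=1$ and $t=1-m\leq 0$, so there is no root greater than $1$ and we are in case (i) of Theorem \ref{prop:Globalizing}; since $\deg\tilde{\pol}=2$ this is case (i.b) with quartic volume growth, and in particular the metric is complete.

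Next I would treat the negative Einstein case $y=-4(m-2)$ with $m\geq 3$. Here $as(a)+4(m-2)=0$, so Lemma \ref{lem:Vanishinh of scalar curvature} gives $\scal(g)\equiv s(a)$, a nonzero constant (nonzero because $m\geq 3$ forces $y\neq 0$, hence $s(a)\neq 0$, and indeed $s(a)<0$). By Derdzi\'nski's Theorem \ref{Thmderd2} the conformally related metric $(\scal(g))^{-2}g$ is Einstein, and since $\scal(g)$ is a nonzero constant this conformal factor is constant, so $g$ itself is K\"ahler--Einstein with negative scalar curvature; this is the same mechanism used for the $m=1$, $as(a)=4$ metric in Corollary \ref{cor:(ii)}. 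For the global behaviour I would again compute $\tilde{\pol}$: the coefficient $q_4$ is proportional to $as(a)+4m-8=y+4m-8$, which vanishes here, so $\tilde{\pol}$ drops to degree $3$, and a direct substitution yields
$$\tilde{\pol}(t)=\frac{t-1}{3}\bigl((m-2)t^2+(m+1)t+(m+1)\bigr).$$
For $m\geq 3$ the quadratic factor has positive coefficients and discriminant $3(m+1)(3-m)\leq 0$, so it has no root greater than $1$ (in fact it is positive for all $t\geq 0$); hence we are again in case (i), now with $\deg\tilde{\pol}=3$, i.e. case (i.b) with exponential volume growth, and the metric is complete.

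The computations are routine substitutions into formulas already established in the proofs of Theorem \ref{thm:Local existence and uniqueness} and Lemma \ref{lem:Vanishinh of scalar curvature}; the only step needing a moment's care is confirming that the cubic factor in the Einstein case has no root larger than $1$, which I expect to handle via the sign of the discriminant $3(m+1)(3-m)$ and the positivity of the quadratic's coefficients for $m\geq 3$. I do not anticipate a genuine obstacle, since once $\tilde{\pol}$ is pinned down in each regime the conclusion is read off directly from Theorem \ref{prop:Globalizing}.
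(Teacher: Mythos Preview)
Your proposal is correct and follows essentially the same approach as the paper: both evaluate $\tilde{\pol}(t)$ explicitly at $y=0$ and $y=-4(m-2)$, verify there are no roots greater than $1$, invoke case (i.b) of Theorem \ref{prop:Globalizing} to get completeness and the stated volume growth, and use Lemma \ref{lem:Vanishinh of scalar curvature} together with Derdzi\'nski's Theorem \ref{Thmderd2} for the scalar-curvature and Einstein conclusions. The only cosmetic difference is that in the Einstein case the paper writes out the roots of the quadratic factor via the quadratic formula and observes they are at most $1$ when real, whereas you argue via the sign of the discriminant $3(m+1)(3-m)$ and the positivity of the coefficients; both arguments are equivalent and equally short.
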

\begin{proof}
	Let us study the case when $y \left( y+4(m-2) \right) =0$. Notice that in such a situation $g$ has constant scalar curvature by Lemma \ref{lem:Vanishinh of scalar curvature}.
	
	We start with the case in which $y=0$. Then,
	\begin{align*}
		\frac{\tilde{\pol}(t)}{t-1} & = \left( (m-1) +  t \right),
	\end{align*}
	which is therefore positive for all $t>1$ and so the metric is defined on the whole $\mathcal{O}(-m)$. In this case, the polynomial $\pol$ has degree $2$ and by (i.b) of Theorem \ref{prop:Globalizing} we find that the metric is complete with quartic volume growth.
	
	As for the case in which $y+4(m-2)=0$ we find that
	\begin{align*}
		\frac{\tilde{\pol}(t)}{t-1} & = \frac{1}{3} \left( m+1 + (m+1) t  + (m-2 ) {t}^{2} \right).
	\end{align*}
	The roots of this polynomial are $-1$ for $m=2$, and
	$$\frac{-(m+1) \pm \sqrt{-3m^2+6m+9}}{2(m-2)},$$
	for all other $m>2$. If real, these are always smaller than $1$ and so ${\pol}(t)$ always remains positive for $t >1$. 
	
	In this last case, the polynomial ${\pol}$ has degree $3$. Therefore, by item (i.b) of Theorem \ref{prop:Globalizing} we find that the resulting metric is complete with exponential volume growth.
	
	Finally, if the scalar curvature $\scal(g)$, which is constant, is nonzero, then by Derdzi\'nski's theorem \ref{Thmderd2}, $(\scal(g))^{-2}g$ is Einstein. As $\scal(g)$ is constant, $g$ is also Einstein.
\end{proof}

\begin{rem}
	By our uniqueness result, the scalar-flat K\"ahler metric corresponding to $a s(a)=0$ and $m=2$ must coincide with the Eguchi-Hanson metric and so it is actually Ricci-flat.
\end{rem}

\subsection{Examples of incomplete metrics}\label{ss:Incomplete}
In this subsection we shall focus on case (i.a) of Theorem \ref{prop:Globalizing}. Although the metrics we obtain here are incomplete we can apply Derdzi\'nski's Theorem \ref{Thmderd2} to them on the region where their scalar curvature remains non-vanishing to get complete Einstein metrics just as we will apply the theorem in the case of $U(2)$-invariant, Bach-flat metrics with a cone angle on $\bbH_m.$
\begin{cor}[Incomplete metrics on $\mathcal{O}(-m)$]\label{cor:(iii)}
	{Let $m\in \mathbb{N},$ $a>0$ and $s(a)\in \bbR^n.$ Let $g$}  be the Bach-flat K\"ahler metrics from Theorem \ref{thm:Local existence and uniqueness} associated to $(a,s(a)).$ Then, there is $y_m \gg 1$ such that if $a s(a) >y_m$, the metric on $\mathcal{O}(-m)$ is incomplete. Furthermore, in this situation either:
	\begin{itemize}
		\item the scalar curvature is everywhere negative if $s(a)<0$;
		\item or, if $s(a)>0$, the scalar curvature vanishes along the hypersurface whose image via $T_m$ composed with the moment map is characterised by 
		$$\frac{r}{a}=  1 + \frac{4m}{a s(a) +4 (m-2)}.$$
	\end{itemize}
\end{cor}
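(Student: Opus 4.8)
The plan is to reduce the incompleteness assertion to the positivity of $\tilde{\pol}$ on $(1,+\infty)$ together with a degree count, and then to invoke part (i.a) of Theorem \ref{prop:Globalizing}; the dichotomy for the scalar curvature will then follow immediately from Lemma \ref{lem:Vanishinh of scalar curvature}. Writing $y=as(a)$ as in the previous corollaries, the leading coefficient of $\tilde{\pol}(t)$ is $\frac{y(y+4(m-2))}{96m}$, so for $y$ large and positive this coefficient is positive, $\tilde{\pol}$ has degree $4$, and $\tilde{\pol}(t)\to+\infty$ as $t\to+\infty$. By Theorem \ref{prop:Globalizing} the metric is then defined for all $r\geq a$ and is incomplete (case (i.a)) precisely when $\pol$, equivalently $\tilde{\pol}$, has no zero larger than $1$. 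So the whole problem is to exhibit a threshold $y_m$ beyond which $\tilde{\pol}(t)>0$ for every $t>1$.

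To do this I would factor out the known simple root at $t=1$ and study the cubic $P(t):=\tilde{\pol}(t)/(t-1)$, which by the smoothness conditions satisfies $P(1)=\tilde{\pol}'(1)=m>0$. Regarding $P$ as a quadratic polynomial in the scale-invariant parameter $y$, a direct computation gives
\begin{equation*}
96m\,P(t)=(t-1)^2(t+1)\,y^2+(t-1)\bigl(4(m-2)t^2-8mt-20m+8\bigr)\,y+96m(t+m-1).
\end{equation*}
For every fixed $t>1$ both the leading coefficient $(t-1)^2(t+1)$ and the constant coefficient $96m(t+m-1)$ are strictly positive, so this quadratic in $y$ is positive once $y$ exceeds its larger real root $y_0(t)$, and is positive for all $y$ whenever the discriminant is negative. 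Since $\tilde{\pol}(t)=(t-1)P(t)$ and $t-1>0$, it therefore suffices to prove that $y_m:=\sup_{t>1}y_0(t)$ is finite; then $y>y_m$ forces $P>0$, hence $\tilde{\pol}>0$, on all of $(1,+\infty)$.

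The bulk of the work, and the main obstacle, is this uniform bound on the largest root. The two places where $y_0(t)$ could escape to $+\infty$ are $t\to1^+$ and $t\to+\infty$. Near $t=1$ the coefficients of $y^2$ and $y$ both vanish to first order while the constant term tends to $96m^2>0$; a short computation shows the discriminant is $\approx-192m^2(t-1)^2<0$ there, so the quadratic has no real root for $t$ near $1$ and $y_0$ simply fails to exist in a punctured neighbourhood of $1$. Away from $1$ one bounds $y_0(t)$ by the sum of the two roots, namely $-\bigl(4(m-2)t^2-8mt-20m+8\bigr)/\bigl((t-1)(t+1)\bigr)$, which remains bounded as $t\to+\infty$ (its limit being $-4(m-2)$). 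Combining these two ends with continuity on the compact remaining region gives $y_m<\infty$; enlarging $y_m$ if necessary so that $y_m\geq4$ also guarantees $y+4(m-2)>0$, i.e. that the leading coefficient is positive, for every $m\in\N$. It is reassuring that for $m=1$ the limit $-4(m-2)$ equals $4$, in agreement with the threshold already seen in Corollary \ref{cor:(ii)}.

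Finally, the statement on the scalar curvature is read off from Lemma \ref{lem:Vanishinh of scalar curvature}, which applies because the metric is globally defined on $\mO(-m)$. When $s(a)>0$ and $y>y_m\geq4$ we have $as(a)+4(m-2)>0$, so $\scal(g)$ vanishes exactly where $\frac{r}{a}=1+\frac{4m}{as(a)+4(m-2)}>1$, a value attained on $\mO(-m)$. For the complementary alternative one takes $s(a)<0$ with $as(a)$ large in absolute value, so that $as(a)+4(m-2)<0$; the same lemma then shows $\scal(g)$ never vanishes, and since $\scal(g)(a)=s(a)<0$ by the explicit formula, $\scal(g)$ is everywhere negative. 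The incompleteness in this second regime follows from the identical positivity analysis applied with $y$ large and negative, for which the leading coefficient $\frac{y(y+4(m-2))}{96m}$ is again positive.
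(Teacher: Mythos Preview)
Your incompleteness argument is correct but proceeds differently from the paper. The paper substitutes $t=1+s/y$ into the cubic $P(t)=\tilde{\pol}(t)/(t-1)$, obtains a cubic in $s$, and computes its discriminant, whose leading behaviour in $y$ is $-768m^2y^8$; for $y\gg1$ the discriminant is negative, so $P$ has a single real root, and since $P(1)=m>0$ with positive leading coefficient that root must lie below $1$. Your route---freezing $t>1$ and regarding $96m\,P(t)$ as a quadratic in $y$ with positive leading and constant coefficients, then bounding the larger root uniformly via the local discriminant analysis near $t=1$ and the sum--of--roots bound at infinity---is a genuinely different and perfectly valid decomposition. The paper's approach packages everything into one discriminant computation; yours is more elementary and makes the uniformity in $t$ explicit, at the cost of a short compactness argument.

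One point to flag: your treatment of the first bullet does not match the stated hypothesis. The corollary assumes $as(a)>y_m$ with $y_m\gg1$ and $a>0$, which forces $s(a)>0$; so the alternative $s(a)<0$ is vacuous as stated, and the paper's own proof accordingly only argues the $s(a)>0$ case via Lemma~\ref{lem:Vanishinh of scalar curvature}. Your final paragraph, which passes to $y=as(a)$ large and \emph{negative} to make sense of $s(a)<0$, is a reasonable (and correct) extension, but it proves something beyond the literal statement rather than the statement itself. If you keep that paragraph, say clearly that you are treating the companion regime $as(a)<-y_m$; otherwise simply note that under the hypothesis the first bullet cannot occur and the second is immediate from Lemma~\ref{lem:Vanishinh of scalar curvature} once $y_m\geq4$ ensures $as(a)+4(m-2)>0$.
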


\begin{proof}
	Again we start by writing 
	\begin{align*}
		\frac{\tilde{\pol}(t)}{t-1} & = \frac{y^2}{96 m} \left( (t-1)^2 (t+1)  \right) +  \\
		& \ \ \ + \frac{4y}{96 m} \left(  (t-1) ((m-2)t^2 -2mt-5m+2)  \right) + \left( t+m-1 \right).
	\end{align*}
	Set $t=1+\frac{s}{y}$. This gives
	\begin{align*}
		\frac{\tilde{\pol} \left(1+\frac{s}{y} \right)}{\frac{s}{y}} & = {\frac { \left( 4\,m+y-8 \right) {s}^{3}+ \left( 2\,{y}^{2}-16\,y	\right) {s}^{2}-24\,my \left( y-4 \right) s+96\,{y}^{2}{m}^{2}}{96\,m {y}^{2}}}.
	\end{align*}
	This cubic polinomial has a root at $s=0$ and discriminant given by the formula
	\begin{align*}
		& -768\,{m}^{2}{y}^{3} \left( 6\,m-12+y \right)  \left( 12\,m+y \right) \times \\
		& \ \ \ \  \times \left( 72\,{m}^{2}y+18\,m{y}^{2}+{y}^{3}-144\,my-12\,{y}^{2}+256
		\right) ,
	\end{align*}
	whose leading term is $-768m^2 y^8$ and thus becomes negative for $y \gg 1$. In such circumstances the only zero of $\frac{\tilde{\pol}(t)}{t-1}$ occurs at $t=1$.
	
	Finally, in such a situation, and by possibly increasing $y_m$, we clearly have $y+4(m-2)>0$ as $y \geq y_m$. Thus, according to Lemma \ref{lem:Vanishinh of scalar curvature} the scalar curvature must then vanish along a hypersurface.
\end{proof}

\subsection{Examples of finite volume metrics}\label{ss:finite volume}

	Let $t_1=b/a$, then $\tilde{\pol}(t)$ has a zero at $t=t_1$. If this zero has multiplicity greater than one, $\frac{\tilde{\pol}(t)}{t-1}$ must have a zero at $t_1$ with multiplicity greater than one and so its discriminant must vanish. We find that
	\begin{align*}
		\frac{\tilde{\pol}(t)}{t-1} & =  \frac {( y+12m )  ( y +8(m-1) )}{96m}  - \frac { \left( y+12m \right)  \left( y-8 \right) }{ 96m} t \\ 
		& \ \ \ \ -{\frac {y \left( 12m+y-8 \right) }{96m}}{t}^{2}+{\frac {y \left( y+4(m-2) \right) }{96m}}{t}^{3}.
	\end{align*}
	and up to a positive proportionality constant, as we have seen the discriminant of this polynomial is given by a constant times
	$$-y^3 ( y+12m )  ( 6m-12+y )  ( 72{m}^{2}y+18m{y}^{2}+{y}^{
		3}-144ym-12{y}^{2}+256 )  .
	$$
	Hence, we find that $\pol$ has a double root if either $y=-12m$, $y=-6(m-2)$, or ${y}^{3}+ 6( 3m-2 ) {y}^{2}+ 72 m ( m-2 ) y+256=0$. When $y=-12 m$ the resulting double root is at $t=\frac{1}{m+1}<1$ and so we can disregard this case. As for when $y=-6(m-2)$ we find that resulting double root is positive (and greater than $1$) only in the case $m=1$ in which case it occurs at $t_1=3$. We shall encounter the corresponding metric again later in the proof of Proposition \ref{prop: Einstein cone angle} and show that this complete, Bach-flat, K\"ahler metric is conformal to a Ricci-flat ALF space namely the smooth Taub-bolt space. This has been studied in detail in  \cite{BG}.

\section{Bach-flat K\"ahler metrics on $\bbH_m$ with a cone angle}\label{sec:hm}

We are now going to look more in-depth into the metrics arising from Corollary \ref{cor:(ii)}. Our goal is to use these as a starting point in Derdzi\'nski's construction. In most cases, as we shall see, the scalar curvature of the extremal metric vanishes along a hypersurface in $\bbH_m$ which separates it into two connected components. Hence, we can only apply Derdzi\'nski's theorem to each of these connected components separately to obtain  what are called Poincar\'e-Einstein metrics. 

The main difference between this section and Corollary  \ref{cor:(ii)} is that rather than using $(a,s(a))$ as parameters, we are going to use the weight $p$ along the divisor at infinity on $\bbH_m$ and $x=b/a$ the quotient of the volume of the section at infinity by the volume of the zero section as parameters. We do in order to have a more geometric interpretation of our construction. 

\subsection{Existence} 

	\begin{prop}\label{prop:metrics with cone-angles}
		Let $m \in \N$, $b>a$ and write $x=\frac{b}{a}$. Then, {for any $p$} there is {an explicit} $U(2)$-invariant, extremal K\"ahler metric $g$ in $\mathbb{H}_m$ with a cone angle $p$ {along} the divisor at infinity. 
		
		This metric is Bach-flat, if and only if
		\begin{equation}\label{eq:quadratic equation in p}
			Ap^2+Bp+C=0,
		\end{equation}
		for
		\begin{align*}
			A & = \left( 3\,{x}^{2}m+2\,{x}^{3}+3\,xm-2 \right)  \left( xm+m-2\,x+2
			\right) \\
			B & = - m \left(  \left( m-2 \right) {x}^{4}+ \left( 2\,m+4 \right) {x}^{3
			}+6\,{x}^{2}m+ \left( 2\,m-4 \right) x+m+2 \right)  \\
			C & = 3\,{m}^{2} x \left( x+1 \right) ^{2} .
		\end{align*}
	\end{prop}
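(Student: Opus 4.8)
The plan is to treat the two assertions of the proposition separately: first the existence of the extremal metric for every weight $p$, and then the characterisation of when it is Bach-flat.

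For existence, recall from Remark \ref{rem:Different parametrizations} that a $U(2)$-invariant extremal Kähler metric on $\bbH_m$ is encoded by a degree $4$ polynomial $q(r)=q_0+q_1r+\frac{q_3}{6}r^3+\frac{q_4}{12}r^4$ with vanishing quadratic term, and that the smoothness/cone conditions at the two ends of $[a,b]$ are exactly (\ref{eq:conditions for closing smoothly in terms of q2}),
\begin{equation*}
q(a)=a^2,\quad q(b)=b^2,\quad q'(a)=(2-m)a,\quad q'(b)=\Big(2+\tfrac{m}{p}\Big)b.
\end{equation*}
These four conditions are linear in $(q_0,q_1,q_3,q_4)$, and the system is uniquely solvable for every $b>a$: a nonzero element of $\mathrm{span}\{1,r,r^3,r^4\}$ cannot have a double root at both $a$ and $b$, since such a polynomial would be a multiple of $(r-a)^2(r-b)^2$, whose $r^2$-coefficient $a^2+4ab+b^2$ is nonzero. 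Thus for each $p$ one obtains a unique candidate $q$ and hence, via (\ref{eq:symplectic potential}), a unique symplectic potential $u_h$. That $u_h$ is genuinely convex, so that $g=g_{u_h}$ is an honest Kähler metric on $\bbH_m$ with the prescribed cone angle, is the content of Lemma \ref{lem:Convexity} together with the positivity of $\pol(r)=r^2-q(r)$ on $(a,b)$, verified in Appendix \ref{appendix:convex}. This settles the first assertion.

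For the Bach-flat characterisation I would invoke Proposition \ref{prop:Conformal Einstein}: the extremal metric $g$ is Bach-flat if and only if $q_3q_1=q_4q_0$. The key structural observation is that the weight $p$ enters the four boundary conditions only through the last one, and there only through the combination $m/p$; consequently solving the linear system expresses each coefficient as an affine function of $1/p$,
\begin{equation*}
q_i=\alpha_i(a,b,m)+\frac{\beta_i(a,b,m)}{p},\qquad i\in\{0,1,3,4\}.
\end{equation*}
Substituting into the quadratic expression $q_3q_1-q_4q_0$ then produces a polynomial of degree $2$ in $1/p$, and multiplying through by $p^2$ turns the Bach-flat equation into $Ap^2+Bp+C=0$ with
\begin{equation*}
A=\alpha_3\alpha_1-\alpha_4\alpha_0,\quad B=\alpha_3\beta_1+\beta_3\alpha_1-\alpha_4\beta_0-\beta_4\alpha_0,\quad C=\beta_3\beta_1-\beta_4\beta_0.
\end{equation*}
This already explains, conceptually, why the Bach-flat locus is cut out by a quadratic in $p$.

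It then remains to compute $A$, $B$, $C$ explicitly and match them with the stated expressions. I would solve the $4\times 4$ system for the $\alpha_i$ and $\beta_i$, assemble the three coefficients above, and finally use the scale invariance of the construction (the Bach condition is homogeneous under rescaling, as with $as(a)$ in Theorem \ref{thm:Local existence and uniqueness}) to reduce everything to the single ratio $x=b/a$; after factoring out the appropriate common power of $a$ one recovers the displayed $A$, $B$, $C$ as polynomials in $x$ and $m$. I expect the main obstacle to be precisely this last algebraic step: solving the Hermite-type linear system and simplifying the resulting expressions for $A$, $B$, $C$ into the factored forms in the statement is genuinely lengthy and is most safely carried out with computer algebra. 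There is, however, no conceptual difficulty once the affine-in-$1/p$ structure above is in place.
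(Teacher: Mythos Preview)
Your proposal is correct and follows essentially the same route as the paper: solve the linear boundary system for $(q_0,q_1,q_3,q_4)$, defer convexity to Appendix~\ref{appendix:convex}, and substitute into $q_3q_1=q_4q_0$. The paper does this by writing the $q_i$'s explicitly (its equation~(\ref{eq:q's for cone angle})) and then expanding; your added observation that each $q_i$ is affine in $1/p$ is a nice structural explanation, absent from the paper, of \emph{why} the Bach-flat condition must be quadratic in $p$, though of course the explicit factored forms of $A,B,C$ still require the computation you flag.
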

	
	{\begin{rem}
	The metrics in the above proposition correspond to the globalisation of those arising from Theorem \ref{thm:Local existence and uniqueness} with
		\begin{equation}\label{eq:matching cone angle and curvature}
			a s(a)=	{\frac { 12 \left(\left( 2\,p -m \right) {x}^{2}+ \left(  \left( 3\,m-2
					\right) p-m \right) x+mp\right)}{ \left( {x}^{2}+4\,x+1 \right)  \left( x-1
					\right) p}}.
		\end{equation} 

	\end{rem}}
	\begin{proof}
	Recall that we need to impose the condition that
	$$
	h''(r)-\frac{1}{m}\left(\frac{1}{r-a}+\frac{p}{b-r}\right)
	$$
	is smooth on $[a,b]$. In terms of $q(r)$, we have $h''=\frac{f}{1-rf}= \frac{1}{r} \frac{q}{r^2-q}$ and the smoothness condition amounts to
	\begin{equation}\label{eq:conditions for closing smoothly}
		\begin{aligned} 
			q(a) = a^2 & , \quad q(b)=b^2 \\ 
			q'(a)= (2-m) a & , \quad q'(b)= \left( 2+ \frac{m}{p} \right) b . 
		\end{aligned}
	\end{equation}
	This is a linear system on the coefficients of the quartic polynomial $q(r)$. Furthermore, since  $q_2 =0,$ such a system has a unique solution for $a \neq b$. Using the notation $x=\frac{b}{a}$, the solution is
	\begin{equation}\label{eq:q's for cone angle}
	\begin{aligned}
		q_0 & =\frac {{a}^{2} \left( p \left( m-1 \right) {x}^{2}+ \left(  \left( 2
			m+2 \right) p-2m \right) x-m-p \right) {x}^{2}}{p \left( x-1
			\right) ^{2} \left( {x}^{2}+4x+1 \right) } ,\\
		q_1 & =-{\frac {a ( p ( m-2 ) {x}^{3}+ (  ( 2m+2
				) p-3m ) {x}^{2}+ (  ( 3m+2 ) p-2m
				) x-m-2p ) x}{p ( x-1 ) ^{2} \left( {x}^{2}+
				4x+1 \right) }}, \\
		q_3 & = \frac { 6( 2p-m ) {x}^{3}+ (  ( 18m-12
			) p-12m ) {x}^{2}+ ( 12 ( m-1 ) p-18
			m ) x+6p ( m+2 ) }{ap ( x-1 ) ^{2}
			( {x}^{2}+4x+1 ) },\\
		q_4 & = \frac { 24\left( m-p \right) {x}^{2}+ \left(  \left( 1-m
			\right) p+m \right) 48x-24 \left( m+1 \right) p}{{a}^{2}p \left( 
			x-1 \right) ^{2} \left( {x}^{2}+4x+1 \right) }.
	\end{aligned}
	\end{equation}
	{The first thing to do is show that for such choices of $q_0, q_1, q_3$ and $q_4$, the resulting polynomial $\pol$ given in terms of $q$ via Remark \ref{rem:Different parametrizations} is positive in $[a,b]$ or said differently that $b$ is the smallest zero of $\pol$ greater than $a.$ This a technical point and we leave it to Appendix \ref{appendix:positive_p}}
	
	{Replacing the above formulas for the $q$'s in the equation $q_3 q_1  = q_4 q_0$, gives} a quadratic equation for $p$ which we write in standard form as $Ap^2+Bp+C=0$,
	with $A$, $B$, and $C$ given by the formulas in the statement. 
	
	Finally, in order to match this metric with the metrics arising from Theorem \ref{thm:Local existence and uniqueness} we must only compute the scalar curvature of these metrics at $r=a$. Recall from Proposition \ref{prop:scalar curvature and extremal metrics} that $s(r)=2q_3 + q_4 r$. Then, evaluating at $r=a$ and inserting the values for $q_3$, $q_4$ found we obtain the formula in the statement.
	\end{proof}

	\begin{remark}
	The quadratic equation (\ref{eq:quadratic equation in p}) has a solution if and only if the discriminant $B^2-4AC$ is non-negative. The discriminant is given by $36m^{2}  x^{2}
	\left( {x}^{2}+4x+1 \right)^{2}$ times 
	$$   \left( m-2 \right)^{2} x^{4} - 4m \left( m -2 \right) x^{3} - 2 \left( 3m^{2}+4 \right) x^{2} - 4m \left( m+2 \right) x+ \left( m+2 \right) ^{2} .$$
	Later, we shall investigate the sign of this quantity separately for $m=1$, $2$ and $m\geq 3$.
	\end{remark}

\subsection{Vanishing locus of the scalar curvature} To apply Derdzi\'nski's theorem to the metrics above we must understand the vanishing locus of the scalar curvature. 
\begin{prop}\label{proposition_admissible}
	Let $m \in \N$, $b>a>0$, $x=\frac{b}{a}$ and $g$ the extremal K\"ahler metric from Proposition \ref{prop:metrics with cone-angles}. Then, there are three rational functions of $m$ and $x$ 
	\begin{equation}
		\begin{aligned}\nonumber
			r_1 & =\frac{mx(x+1)}{2x(x-1)+m(3x+1)},\\  \nonumber
			r_2 & =\frac{m x (x+3)}{2-2x+m(x+1)}, \\ \nonumber
			d & =\frac{mx(x + 2)}{(x-1)^2 + m(2x+1)},
		\end{aligned}
	\end{equation}
	having the following significance:
	\begin{itemize}
		\item If $m>1$ or $m=1$ and $x<3$. Then, the cone angle extremal K\"ahler metric $g$ has nowhere vanishing scalar curvature if and only if $p\in(r_1,r_2)$.
		\item If $m=1$ and $x=3$. Then, the cone angle extremal K\"ahler metric $g$ has nowhere vanishing scalar curvature if and only if $p > \frac{6}{11}$.
		\item If $m=1$ and $x>3$. Then, the cone angle extremal K\"ahler metric $g$ has nowhere vanishing scalar curvature if and only if $p>r_1$.
	\end{itemize}
\end{prop}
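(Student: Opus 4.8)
The plan is to exploit the fact, established in Theorem \ref{prop:scalar curvature and extremal metrics}, that for these extremal metrics the scalar curvature $\scal(g)(r) = 2q_3 + q_4 r$ is an \emph{affine} function of $r = x_1 + x_2$, while on $\bbH_m$ the variable $r$ ranges over the closed interval $[a,b]$. An affine function on a closed interval is nowhere zero precisely when its values at the two endpoints are nonzero and of the same sign; equivalently, $\scal(g)$ is nowhere vanishing on $[a,b]$ if and only if $\scal(g)(a)\,\scal(g)(b) > 0$ (this holds uniformly, including the constant case $q_4=0$, where the product is a square). This reduces the whole question to computing the two endpoint values and tracking the sign of their product as $p$ varies, keeping in mind that $x>1$ and $p>0$ throughout.

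First I would compute $\scal(g)(a) = 2q_3 + q_4 a$ and $\scal(g)(b) = 2q_3 + q_4 ax$ by substituting the explicit coefficients $q_3,q_4$ from \eqref{eq:q's for cone angle}. The common denominator $ap(x-1)(x^2+4x+1)$ is strictly positive, so the signs are carried by the numerators, and the expected outcome of the algebra is the factorisations
\begin{align*}
a\,\scal(g)(a) &= \frac{12\,C_a\,(p - r_1)}{p(x-1)(x^2+4x+1)}, & C_a &= 2x(x-1) + m(3x+1), \\
a\,\scal(g)(b) &= \frac{12\,C_b\,(p - r_2)}{p(x-1)(x^2+4x+1)}, & C_b &= (2-m)x - (m+2),
\end{align*}
which exhibit $r_1$ and $r_2$ as exactly the values of $p$ at which $\scal(g)$ vanishes at the zero section $\{r=a\}$ and at the divisor at infinity $\{r=b\}$, respectively. (The third function $d$ is the value of $p$ at which $q_4=0$, i.e.\ where $\scal(g)$ is a nonzero constant; one checks it lies inside $(r_1,r_2)$.) Since $C_a>0$ always, the sign of $\scal(g)(a)\,\scal(g)(b)$ equals the sign of $C_b\,(p-r_1)(p-r_2)$.

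The final step is a case analysis on the sign of $C_b$, which is the only sign that changes. When $m\geq 2$, or $m=1$ and $x<3$, one has $C_b<0$, so the product is positive exactly when $(p-r_1)(p-r_2)<0$; to identify this with the interval $(r_1,r_2)$ as stated I would separately verify $r_1<r_2$, which after clearing the positive denominators reduces to the manifestly positive inequality $2x^3 + (2m+6)x^2 + (8m-6)x + (2m-2) > 0$ for $x>1$, $m\geq 1$. When $m=1$ and $x=3$ one has $C_b=0$, so $\scal(g)(b)$ is a positive constant (independent of $p$) and the condition collapses to $\scal(g)(a)>0$, i.e.\ $p>r_1$, which at $x=3$ evaluates to $p>\frac{6}{11}$. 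When $m=1$ and $x>3$ one has $C_b>0$ while $r_2=\frac{x(x+3)}{3-x}<0<p$, so $(p-r_2)>0$ automatically and positivity of the product is again equivalent to $p>r_1$.

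Conceptually the argument is short once the affine reduction is made. The main labour — and the place where sign errors are easiest to make — is the bookkeeping in the two explicit numerator computations and their factorisation into the clean forms above, together with the sign discussion of $C_b$ that separates the three regimes. I expect no genuine obstacle beyond this algebra, since the positivity of all denominators and of $C_a$, and the elementary inequality $r_1<r_2$, are all routine to confirm.
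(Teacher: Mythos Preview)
Your approach is correct and in fact cleaner than the paper's. Both arguments ultimately rest on the same computation of $\scal(g)(a)$ and $\scal(g)(b)$ and on identifying $r_1$, $r_2$ as the values of $p$ at which these endpoint values vanish. The difference is in the packaging: the paper locates the (potential) zero of the affine scalar curvature at $r/a = -2q_3/(aq_4)$ and asks when it lies in $[1,x]$, which forces one to track the sign of $q_4$ separately (this is where the third function $d$ enters) and to analyse the two inequalities via sign tables in $p$. Your product criterion $\scal(g)(a)\,\scal(g)(b)>0$ bypasses the division by $q_4$ entirely, so $d$ drops out of the argument and the three regimes are distinguished solely by the sign of $C_b=(2-m)x-(m+2)$. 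What the paper's route buys is the explicit appearance of $d$ (the cscK value of $p$), which is used elsewhere; what yours buys is a shorter, table-free proof of this particular proposition.
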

\begin{proof}
	Recall that the scalar curvature of the metric is given by ${\scal(g)}(r)=2q_3+q_4 r$. Notice that if $q_3 \neq 0$ and $q_4 =0$ the scalar curvature is constant and nowhere vanishing. Using the expression for $q_4$ in equation (\ref{eq:q's for cone angle}) we can rewrite the condition that $q_4=0$ as 
	$$p=d,$$ 
	{which corresponds to the case when the scalar curvature of the constructed $g$ is constant. When $\scal(g)$ is not constant it vanishes in $\bbH_m$ when $-\frac{2q_3}{q_4}$ satisfies $ a\leq-\frac{2q_3}{q_4}\leq b$} which we rewrite
	$$1\leq -\frac{2q_3}{aq_4}\leq x.$$
	We shall make this condition explicit in terms of $x,m$ and $p.$ From equation (\ref{eq:q's for cone angle}) we have
	$$
	-\frac{2q_3}{aq_4}=\frac{m x (3 + 2 x + x^2)- p \left(2 (-1 + x)^2 (1 + x) + m (1 + 2 x + 3 x^2)\right)}{2(x (x + 2) -  p (x^2 + 2 x (m - 1) + m+1))},
	$$
	and the inequality above becomes
	$$
	1\leq \frac{m x (3 + 2 x + x^2)-p \left(2 (-1 + x)^2 (1 + x) + m (1 + 2 x + 3 x^2)\right)}{2(mx (x + 2) -  p (x^2 + 2 x (m - 1) + m+1))} \leq x,
	$$
	that is
	$$
	\begin{cases}
		\frac{(x-1)\left(mx(x+1)-p(2x(x-1)+m(3x+1))\right)}{2\left(mx(x + 2) - p ((x-1)^2+m(2x+1)))\right)}\geq 0\\\
		\frac{(x-1)\left(m x (x+3)-p(2-2x+m(x+1))\right)}{2\left(mx(x + 2) - p ((x-1)^2+m(2x+1)))\right)}\geq 0,
	\end{cases}
	$$
	i.e.
	$$
	\begin{cases}
		\frac{mx(x+1)-p(2x(x-1)+m(3x+1))}{mx(x + 2) - p ((x-1)^2+m(2x+1)))}\geq 0\\\
		\frac{m x (x+3)-p(2-2x+m(x+1))}{mx(x + 2) - p ((x-1)^2+m(2x+1)))}\geq 0.
	\end{cases}
	$$
	
	For fixed $m$ and $x$ we will look at these inequalities as inequalities in $p.$ We want to understand the sign of 
	\begin{IEEEeqnarray}{lCr} \nonumber
		N_1=mx(x+1)-p(2x(x-1)+m(3x+1)),\\  \nonumber
		N_2=m x (x+3)-p(2-2x+m(x+1)), \\ \nonumber
		\mD=mx(x + 2) - p ((x-1)^2 + m(2x+1)),
	\end{IEEEeqnarray}
	whose roots are respectively
	\begin{IEEEeqnarray}{lCr} \nonumber
		r_1=\frac{mx(x+1)}{2x(x-1)+m(3x+1)},\\  \nonumber
		r_2=\frac{m x (x+3)}{2-2x+m(x+1)}, \\ \nonumber
		d=\frac{mx(x + 2)}{(x-1)^2 + m(2x+1)}.
	\end{IEEEeqnarray}
	
	When $m>1$, we have $r_1<d$ as
	$$
	\begin{aligned}
		& x+1<x+2\\
		& (x-1)^2 + m(2x+1)<2x(x-1)+m(3x+1),
	\end{aligned}
	$$
	and also $d<r_2$ since
	$$
	\begin{aligned}
		&x+2<x+3\\
		&0<2-2x+m(x+1)<(x-1)^2 + m(2x+1).
	\end{aligned}
	$$
	As for the $m=1$ case, the same holds as long as $x<3$, otherwise $r_2<0.$ 
	We can draw a sign table for $N_1/\mD$ and $N_2/\mD.$ For the case $m>1$ or $m=1$ and $x<3$ we have
	\begin{center}
		\begin{tabular}{|| c c c c c c c c c c c} 
			\hline
			$p$ &0 & & $r_1$& &$d$& &$r_2$& &  \\ 
			\hline
			$\mD$& &+ & & + &0 &- & &-&  \\ 
			\hline
			$N_1$& &+ &0 &-& &- & &-&   \\
			\hline
			$N_2$& &+ & & + & &+ & 0&-&  \\
			\hline
			$N_1/\mD$& &+&0&- & $\times$&+& &+ \\
			\hline
			$N_2/\mD$& &+& &+&$\times$&-&0&+ \\
			\hline\hline
		\end{tabular}
	\end{center}
	
	The last two rows are simultaneously nonnegative if and only if $p\leq r_1,$ or $p\geq r_2.$ Therefore for such $m$ and $x$ the scalar curvature $\scal(g)$ vanishes if and only if $p\leq r_1,$ or $p\geq r_2,$ as claimed. 
	
	The case when $m=1$ and $x>3$ can be summed up in the following sign diagram. 
	\begin{center}
		\begin{tabular}{|| c c c c c c c c c } 
			\hline
			$p$ &0 & & $r_1$& &$d$& &  \\ 
			\hline
			$\mD$& &+ & & + &0 &- &   \\ 
			\hline
			$N_1$& &+ &0 &-& &- &   \\
			\hline
			$N_2$& &+ & & + & & + &   \\
			\hline
			$N_1/\mD$& &+&0&- & $\times$&+  \\
			\hline
			$N_2/\mD$& &+& &+&$\times$&-& \\
			\hline\hline
		\end{tabular}
	\end{center}
	The last two rows are simultaneously nonnegative when $p \leq  r_1$. Therefore for fixed $m=1$ and $x>3,$ $\scal(g)$ vanishes in $[a,b]$ if and only if $p \leq  r_1$. 
	
	As for the case when $m=1$ and $x=3,$ we have
	$$\scal(g)(r) = \frac{9(11p-9)}{11 a p}  + \frac{3(15-11p) }{11 a p} \frac{r}{a}, $$
	which is constant and nonzero when $p=\frac{15}{11}$. Therefore, we may assume that $p \neq \frac{15}{11},$in which case we find that
	\begin{align*}
		\frac{N_1}{\mathcal{D}} & =  \frac{2(11p-6)}{11p-15},\\
		\frac{N_2}{\mathcal{D}} & = - \frac{18}{11p-15} ,
	\end{align*}
	The first of these is nonnegative if and only if $p \in \left( -\infty , \frac{6}{11} \right]  \cup \left( \frac{15}{11} , +\infty \right)$ while the second is nonnegative if and only if $p < \frac{15}{11}$. Therefore, they are simultaneously nonnegative for $p \leq \frac{6}{11}$.
\end{proof}

\begin{remark}
	Consider the case $m > 1$ and large $x \gg 1$. We find that
	\begin{align*}
		r_1 & = \frac{m}{2} + m \left(1-\frac{3m}{4}\right) \frac{1}{x} +{O\left(\frac{1}{x^2}\right)} \\
		r_2 & = \frac{m}{m-2} x + {O\left(\frac{1}{x^2}\right)},
	\end{align*}
	where ${O\left(\frac{1}{x^2}\right)}$ denotes a function whose absolute value is bounded by a constant times $\frac{1}{x^2}$. 
\end{remark}

\subsection{The case of $\mathbb{H}_1$}

\begin{prop}\label{cor:m=1}
	Let $a,b$ be such that $0<a<b$ and set $x = \frac{b}{a} \geq 1.$ Let $g$ be a $U(2)$-invariant Bach-flat K\"ahler metric on $\mathbb{H}_1$ with cone angle $\beta=2\pi p^{-1}$ along {$S_b$}. Then, there exist $x_0 \approx 2.61$ and functions $$p_-:[x_0,3]\rightarrow \bbR, \quad p_+:[x_0,+\infty[\rightarrow \bbR$$ such that either:
		\begin{itemize}
			\item $x=x_0$ and $p=p_{+}(x_0)=p_{-}(x_0)\approx 2.062$ for $x_0 \approx 2.61.$
			\item $x \in (x_0, 3)$ and $p=p_+(x)$ or $p=p_-(x)$. 
			\item $x=3$ and $p=\frac{12}{11}$.
			\item $x>3$ and $p=p_+(x)$. This satisfies $\lim_{x \to + \infty}p_+(x)=\frac{1}{2}$.
		\end{itemize}
		In addition, the scalar curvature $\scal(g)$ of these metrics is always positive.
		
		Now, for $x \geq x_0$ and $p_\pm(x)$ as above, let $g_{x,\pm}$ be the corresponding metrics. Then, as $x \to 3^-$, the metrics $g_{x,-}$ converge smoothly on compact subsets of $\mathbb{H}_1 \backslash S_b$ to the complete, Bach-flat, K\"ahler metric of finite volume with $a s(a)=6$ discussed in subsection \ref{ss:finite volume}.
		
		On the other hand, the limit as $x \to +\infty$ of the metrics associated with $p_+(x)$ converge smoothly (on compact sets away from $S_b$) to the complete scalar-flat ALE K\"ahler metric in $\mathcal{O}(-1)$ from Corollary \ref{cor:scalar flat}.
	\end{prop}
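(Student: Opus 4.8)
The plan is to reduce the whole statement to the single quadratic in the weight $p$ furnished by Proposition \ref{prop:metrics with cone-angles}, and then to read off everything from its coefficients. First I would specialize that proposition to $m=1$, where the coefficients become $A=(2x^3+3x^2+3x-2)(3-x)$, $B=x^4-6x^3-6x^2+2x-3$ and $C=3x(x+1)^2$, so that a Bach-flat cone metric of weight $p$ exists precisely when $Ap^2+Bp+C=0$. Real solutions exist iff the discriminant is nonnegative, and by the Remark following Proposition \ref{prop:metrics with cone-angles} this reduces, after discarding the manifestly positive factor $36x^2(x^2+4x+1)^2$, to the sign of $\Phi(x)=x^4+4x^3-14x^2-12x+9$. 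A short root count (using $\Phi(0)>0$, $\Phi(1)<0$, $\Phi(3)>0$ and $\Phi\to+\infty$) shows that the only positive root relevant for $x\ge 1$ is a simple root $x_0\in(2,3)$, numerically $x_0\approx 2.61$, with $\Phi>0$ for $x>x_0$; this gives the existence threshold $x\ge x_0$.

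Next I would extract the branch structure directly from the signs of $A$, $B$, $C$. At $x=x_0$ the discriminant vanishes, giving the double root $p=-B/(2A)\approx 2.062$. On $(x_0,3)$ one has $A>0$, $B<0$, $C>0$, so both roots are positive; I label the smaller $p_+$ and the larger $p_-$. The crucial degeneration is at $x=3$, where the factor $(3-x)$ kills $A$: the quadratic becomes linear with the single finite solution $p=-C/B=\tfrac{12}{11}$, while the larger root $p_-$ has escaped to $+\infty$ as $x\to 3^-$. For $x>3$ one has $A<0$, hence $C/A<0$ and exactly one positive root, which joins continuously to $p_+$ through $(3,\tfrac{12}{11})$; balancing leading terms ($-2p^2+p=0$) yields $\lim_{x\to+\infty}p_+(x)=\tfrac12$. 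Thus $p_+$ is defined for all $x\ge x_0$, whereas $p_-$ exists only up to $x=3$ (blowing up as $x\to 3^-$), matching the stated domains.

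For the positivity of the scalar curvature I would use that $\scal(g)(r)=2q_3+q_4r$ is affine in $r$, so it suffices to control its sign at the endpoints $r=a$ and $r=b$; inserting the explicit coefficients (\ref{eq:q's for cone angle}) with $m=1$ reduces this to the sign of explicit rational functions of $x$ along the constraint $Ap^2+Bp+C=0$. The cleanest route invokes Proposition \ref{proposition_admissible}: checking that $p_\pm(x)$ lies in the nowhere-vanishing range (namely $p_\pm\in(r_1,r_2)$ for $x<3$, $p>\tfrac{6}{11}$ at $x=3$, and $p_+>r_1$ for $x>3$) shows that $\scal(g)$ never vanishes on $[a,b]$. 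Since the two branches meet at $x_0$, the parameter region $\{(x,t):x\ge x_0,\ 1\le t\le x\}$ (with $t=r/a$) is connected, so the sign of the jointly continuous, nowhere-zero function $\scal(g)$ is constant along it; I would pin it down as positive using the explicit Einstein member of Corollary \ref{cor:(ii)} (the case $m=1$, $as(a)=4$, which has $x=1+\sqrt{3}>x_0$ and positive scalar curvature) as an anchor.

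Finally, for the two limits I would note that $g_{x,\pm}$ is determined, via the symplectic potential (\ref{eq:symplectic potential}), by the quartic $\pol$ (equivalently the $q_i$), which depend rationally on $(x,p)$. As $x\to 3^-$ along $p_-$ the weight $p_-\to+\infty$, so the boundary condition $\pol'(b)=-b/p\to 0$ collapses the simple root at $b$ into a double root; since $b=xa\to 3a$, the limiting polynomial is exactly the one with $as(a)=6$ and a double root at $t=3$ from subsection \ref{ss:finite volume}, and convergence of the $q_i$ upgrades to $C^\infty$ convergence of $g_u$ on compact subsets of $\mathbb{H}_1\setminus S_b$ where $\pol>0$. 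As $x\to+\infty$ along $p_+\to\tfrac12$ one computes $as(a)\to 0$, so the top coefficients of $\tilde{\pol}$ (which carry factors of $as(a)$) vanish and $\tilde{\pol}\to t(t-1)$, the degree-two polynomial of the scalar-flat metric on $\mathcal{O}(-1)$ of Corollary \ref{cor:scalar flat}, while the divisor $S_b$ recedes to infinity; again coefficient convergence gives $C^\infty$ convergence on compacts. The hard part will be the bookkeeping in the positivity step, namely verifying that both roots $p_\pm$ stay inside the admissible ranges of Proposition \ref{proposition_admissible} uniformly in $x$ (in particular as $p_-\to\infty$ near $x=3$), together with rigorously upgrading convergence of polynomial coefficients to $C^\infty$ convergence of the metrics on compact sets in the two degenerations.
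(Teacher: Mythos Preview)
Your proposal is correct and follows essentially the same route as the paper: specialise Proposition~\ref{prop:metrics with cone-angles} to $m=1$, analyse the discriminant $\Phi(x)=x^4+4x^3-14x^2-12x+9$ to locate $x_0$, read off the branch structure from the signs of $A,B,C$ (in particular the degeneration $A=0$ at $x=3$), invoke Proposition~\ref{proposition_admissible} for the non-vanishing of $\scal(g)$, and pass to the limiting polynomials for the two degenerations. The only noteworthy difference is in how you pin down the \emph{sign} of the scalar curvature once non-vanishing is established: the paper simply observes that $s(a)>0$ at the zero section (from the explicit formula~\eqref{eq:matching cone angle and curvature}) and concludes positivity for each fixed metric, whereas you argue by connectedness of the full parameter region $\{(x,t):x\ge x_0,\ 1\le t\le x\}$ (using that the two $p_\pm$-branches meet at $x_0$) and anchor at the Einstein point $as(a)=4$. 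Both are valid; yours is arguably more careful, since the paper's assertion ``$s(a)>0$'' is stated without justification and in fact requires checking the sign of the numerator in~\eqref{eq:matching cone angle and curvature} along the constraint curve. For the limits, the paper computes the asymptotics of $p_-(x)$ near $x=3$ and of $p_+(x)$ near infinity and feeds them into~\eqref{eq:matching cone angle and curvature} to identify $as(a)\to 6$ and $as(a)\to 0$ respectively, while you argue more geometrically via the polynomial $\pol$ (the double root forming at $t=3$, and the top coefficients dying with $as(a)$); these are two views of the same computation.
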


In fact we have explicit formulas for $p_{\pm}$ namely
\begin{equation}\label{eq:solution to quadratic eq for p}
	p_{\pm}(x)=\frac {  x^{4}-6x^{3}-6x^{2}+2x-3 \pm ( x^{2}+4x+1 ) \sqrt {x^{4}+4x^{3}-14x^{2}-12x+9} }{4x^{4}-6x^{3}-12x^{2}-22x+12}.
\end{equation}

\begin{proof}
This is a consequence of the behaviour of the quadratic equation (\ref{eq:quadratic equation in p}) when $m=1.$ We have $Ap^2+Bp+C=0$, with
		\begin{align*}
			A & = \left( 3\,{x}^{2}+2\,{x}^{3}+3\,x-2 \right)  \left( -x+3 \right) \\
			B & = \left( {x}^{4}-6 {x}^{3
			}-6\,{x}^{2}+2x-3 \right)  \\
			C & = 3\,x \left( x+1 \right) ^{2} .
		\end{align*}
The discriminant of $Ap^2+Bp+C$ is $x^{4}+4x^{3}-14x^{2}-12x+9$ and $x_0$ is the only positive root of this quartic polynomial. This discriminant is negative for $x<x_0$ and the equation only has solutions for $x\geq x_0$ and the solutions are $p_{\pm}$ as in the formulas above. Notice that the formula for $p_-(x)$ is only valid in case $A \neq 0$ which for $x>1$ is equivalent to $x\neq 3$. For $x=3$ the denominator vanishes and we have instead the solution $p=\frac{12}{11}$ obtained either from $p_+(x)$ or the equation $Bp+C=0$.
		
		Next, we find from direct inspection that while for $x\in (x_0,3)$ both $p_\pm(x)$ are positive while for $x>3$ only $p_+(x)$ is positive.

		We prove the claimed assertion on the scalar curvature using Proposition \ref{proposition_admissible}. First consider $x<3$, then according to Proposition \ref{proposition_admissible}, $\scal(g)$ does not vanish if $p \in (r_1, r_2)$ where 
		$$r_1 = \frac{x(x+1)}{2x^2+x+1} , \ \ r_2 = \frac{x(x+3)}{3-x} . $$
		One can check directly that if $x_0<x<3$, then
		\[
		\scalebox{.9}{$\frac{x(x+1)}{2x^2+x+1}<\frac{-3 + 2 x - 6 x^2 - 6 x^3 + x^4 \pm (1 + 4 x + x^2)\sqrt{9 - 12 x - 14 x^2 + 4 x^3 + x^4}}{2 (-3 + x) (-2 +3 x + 3 x^2 + 2 x^3)}<\frac{x(x+3)}{3-x},$}
		\]
		thus proving that the scalar curvature does not vanish for $x<3$. {Since $s(a)>0$}, it is everywhere positive. On the other hand, for $x>3$, Proposition \ref{proposition_admissible} ensures that the scalar curvature does not vanish if and only if $p>r_1$ which holds for $x >3$. From Proposition \ref{proposition_admissible}, for $x=3$ we need $p$ to be greater than $\frac{6}{11}$ which is the case because $p=\frac{12}{11}.$\\
		
		We turn to the second part of the statement concerning the limits of the above metrics.
		
		In order to study the limit $x \to 3^-$ of the metrics associated with $p_-(x)$ we need to look at the asymptotic behaviour of $p_-(x)$. 
		For $x<3$ near $3$
		$$p_-(x) = -\frac{3}{2} \frac{1}{(x-3)}+ O(1).$$
		where $O(1)$ denotes a bounded function in a neighbourhood of $3.$ Inserting this into equation (\ref{eq:matching cone angle and curvature}) (with $m=1$ and $p=p_-(x)$) we find that the resulting metrics have $as(a)$ given by
		\begin{align*}
			a s(a) &  = 6 + O\left(x-3\right) ,
		\end{align*}
		where $O\left(x-3\right)$ denotes a function which is bounded in absolute value by a constant times $x-3$.
		
		On the other hand, in the limit $x \to +\infty$, we have $p_+(x) = \frac{1}{2} +O(x^{-1}).$  Here again $O\left(\frac{1}{x}\right)$ denotes a function which is bounded in absolute value by a constant times $\frac{1}{x}.$ Inserting into equation (\ref{eq:matching cone angle and curvature}) we obtain that $a s(a)$ converges to $0$. Hence, the resulting metrics converge to the complete scalar-flat ALE metric on $\mathcal{O}(-1)$ from Corollary \ref{cor:scalar flat}.
	\end{proof}

	\begin{remark}[Limit of the metric associated with $p_-(x)$ as $x \to 3^-$]
	We have seen that in the limit $x \to 3^-$ the metrics associated with $p_-(x)$ converge to a complete, Bach-flat, K\"ahler metric of finite volume with $a s(a)=6$ discussed in subsection \ref{ss:finite volume}. We will encounter this metric again in Proposition \ref{prop: Einstein cone angle} where we show it is conformal to the smooth Taub-bolt metric.
	\end{remark}

\subsection{The case of $\bbH_2$}
\begin{prop}
		There is no $U(2)$-invariant Bach-flat K\"ahler metric on $\mathbb{H}_2$ with a cone angle along the divisor at infinity.
	\end{prop}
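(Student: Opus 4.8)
The plan is to show that the Bach-flatness condition from Proposition \ref{prop:metrics with cone-angles}, specialised to $m=2$, is never satisfiable for an admissible parameter $x=\frac{b}{a}>1$. A $U(2)$-invariant extremal K\"ahler metric on $\mathbb{H}_2$ with a cone angle along the divisor at infinity exists for a given weight $p>0$, and by that proposition it is Bach-flat precisely when $p$ is a root of the quadratic $Ap^2+Bp+C=0$ of equation (\ref{eq:quadratic equation in p}). Hence it suffices to prove that this quadratic has no real root for any $x>1$ when $m=2$.

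First I would substitute $m=2$ into the coefficients $A$, $B$, $C$. The factor $xm+m-2x+2$ appearing in $A$ collapses to the constant $4$, giving $A=8\big((x+1)^3-2\big)$, while $B=-8(2x^3+3x^2+1)$ and $C=12x(x+1)^2$. Since $x>1$ forces $(x+1)^3>8>2$, we have $A>0$; in particular the only real zero of $A$ is $x=2^{1/3}-1<1$, so no admissible $x$ produces a degenerate (merely linear) equation. Thus for every $x>1$ the Bach condition is a genuine quadratic in $p$ with positive leading coefficient, and the existence of a real solution is equivalent to non-negativity of its discriminant.

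Next I would evaluate the discriminant using the expression recorded in the Remark following Proposition \ref{prop:metrics with cone-angles}: it equals $36m^2x^2(x^2+4x+1)^2$ times a quartic in $x$. At $m=2$ that quartic degenerates to
$$-32x^2-32x+16=-16\big(2x^2+2x-1\big),$$
so, because the prefactor $36m^2x^2(x^2+4x+1)^2$ is strictly positive, the discriminant has the sign of $-(2x^2+2x-1)$. The roots of $2x^2+2x-1$ are $\tfrac{-1\pm\sqrt{3}}{2}$, both strictly less than $1$ (the larger is $\approx 0.37$), hence $2x^2+2x-1>0$ for all $x>1$ and the discriminant is strictly negative throughout the admissible range.

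Combining $A>0$ with a strictly negative discriminant shows that $p\mapsto Ap^2+Bp+C$ is strictly positive for every real $p$, so equation (\ref{eq:quadratic equation in p}) has no real, and a fortiori no positive, solution when $m=2$ and $x>1$. Therefore no $U(2)$-invariant Bach-flat K\"ahler metric on $\mathbb{H}_2$ carries a cone angle along the divisor at infinity. The computation is short once the discriminant is in hand; the only point demanding care is the clean separation of cases, namely checking that $A$ cannot vanish on the admissible range and that the sole relevant root of the reduced quadratic factor lies below $x=1$, which is exactly what precludes a last-minute real solution.
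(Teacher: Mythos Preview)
Your proof is correct and follows essentially the same approach as the paper: specialise the discriminant of the quadratic \eqref{eq:quadratic equation in p} to $m=2$, observe that it reduces to a negative multiple of $2x^2+2x-1$ for $x>1$, and conclude there is no real root $p$. Your explicit check that $A>0$ on the admissible range is a small extra bit of care (the paper omits it, relying implicitly on the fact that $B^2-4AC<0$ already rules out $A=0$), but otherwise the arguments coincide.
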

\begin{proof}
	{Recall that the discriminant of $Ap^2+Bp+C$ is given by
	{\tiny	$$  36m^{2}  x^{2}
	\left( {x}^{2}+4x+1 \right)^{2}\left[ \left( m-2 \right)^{2} x^{4} - 4m \left( m -2 \right) x^{3} - 2 \left( 3m^{2}+4 \right) x^{2} - 4m \left( m+2 \right) x+ \left( m+2 \right) ^{2} \right].$$}
	In the case $m=2$ this becomes a constant multiple of
		$$ x^{2}
	\left( {x}^{2}+4x+1 \right)^{2}\left(- 32 x^{2} - 32 x+ 16 \right).$$}
		This is non-negative for $x \in [-\frac{1}{2}-\frac{\sqrt{3}}{2}, -\frac{1}{2}+\frac{\sqrt{3}}{2}]$ and negative otherwise. However, in this interval $x<1$ and so we conclude that there is no solution to the quadratic equation (\ref{eq:quadratic equation in p}) for $x>1.$	This proves the proposition. \end{proof}

\subsection{The case of $\mathbb{H}_m$ for $m \geq 3$}
\begin{prop}\label{cor:m>2}
		Let $0<a<b$ and $m \geq 3$. Set $x=b/a.$ Let $\bbH_m$ be endowed with a K\"ahler form such that the image via $T_m$ composed with the moment map  of  $\bbH_m$ is
				$$ \{(x_1,x_2): x_1,x_2\geq 0, a\leq x_1+x_2\leq b\}.$$
		Then, {for large enough x,} there are two $U(2)$-invariant, Bach-flat, K\"ahler metrics $g_{x,\pm}$ on $\mathbb{H}_m$ with a cone angle $2\pi/p_{\pm}(x,m)$ along $S_b$. In addition, all such metrics have their scalar curvature vanishing along a $3$-dimensional hypersurface.
		
		We have 
		$$\lim_{x \to + \infty} p_+(x,m)=\frac{m}{2}, \ \ \lim_{x \to + \infty} p_-(x,m)=0.$$
		Also, in this $x \to +\infty$ limit, the metrics $g_{x,\pm}$ converge uniformly with all derivatives on compact sets to the complete, Bach-flat K\"ahler metrics $g_\pm$ on $\mathbb{H}_m \backslash {S_b}$. 
		
		The metric $g_+$ is the scalar-flat ALE K\"ahler metric in the first bullet of Corollary \ref{cor:scalar flat}. The metric $g_-$ is the negative K\"ahler--Einstein metric of exponential volume growth from the second bullet of Corollary \ref{cor:scalar flat}.
	\end{prop}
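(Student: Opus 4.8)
The plan is to extract existence and the $x\to\infty$ asymptotics directly from the quadratic Bach-flatness condition of Proposition~\ref{prop:metrics with cone-angles}, and then to feed the resulting one-parameter families into Proposition~\ref{proposition_admissible} and into the matching formula~(\ref{eq:matching cone angle and curvature}).

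\emph{Existence and the limits of $p_\pm$.} By Proposition~\ref{prop:metrics with cone-angles}, for every $p$ there is an explicit $U(2)$-invariant extremal K\"ahler metric on $\bbH_m$ with cone angle $2\pi/p$ along $S_b$, Bach-flat precisely when $Ap^2+Bp+C=0$. First I would observe that for $m\geq 3$ and $x$ large this is a genuine quadratic, since $A=2(m-2)x^4+O(x^3)$ has positive leading coefficient. Its discriminant is, up to the positive factor $36m^2x^2(x^2+4x+1)^2$, the quartic $(m-2)^2x^4-4m(m-2)x^3-2(3m^2+4)x^2-4m(m+2)x+(m+2)^2$, whose leading coefficient $(m-2)^2$ is positive; hence the discriminant is positive for all large $x$, yielding two real roots $p_\pm(x,m)$ (take $p_+$ with the $+$ sign, i.e.\ the larger root since $A>0$). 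Using $B=-m(m-2)x^4+O(x^3)$ and $C=3m^2x^3+O(x^2)$, Vieta's formulas give
\[
p_++p_-=-\frac{B}{A}\longrightarrow \frac{m}{2},\qquad p_+p_-=\frac{C}{A}=\frac{3m^2}{2(m-2)x}+O(x^{-2})\longrightarrow 0,
\]
as $x\to+\infty$. Thus the larger root tends to $m/2$ and the smaller to $0$, giving $\lim_{x\to\infty}p_+=m/2$ and $\lim_{x\to\infty}p_-=0$.

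\emph{Vanishing of the scalar curvature.} Here I would invoke Proposition~\ref{proposition_admissible}: for $m>1$ the scalar curvature is nowhere vanishing if and only if $p\in(r_1,r_2)$, with $r_1=\tfrac{m}{2}+m(1-\tfrac{3m}{4})x^{-1}+O(x^{-2})$ and $r_2=\tfrac{m}{m-2}x+O(x^{-2})$ by the remark following it. Since $p_-\to 0<\tfrac{m}{2}=\lim r_1$, we have $p_-<r_1$ for large $x$, so $p_-\notin(r_1,r_2)$. The delicate case is $p_+$, which also tends to $m/2=\lim r_1$; there I would expand one order further. Writing $p_+=\tfrac{m}{2}+c_+x^{-1}+O(x^{-2})$, combining the expansion of $-B/A$ with $p_-\sim\tfrac{3m}{(m-2)x}$ (from $p_+p_-=C/A$) gives $c_+=\frac{m(-3m^2+8m-8)}{4(m-2)}$, to be compared with the $x^{-1}$-coefficient $\frac{m(4-3m)}{4}$ of $r_1$. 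Multiplying the inequality $c_+\le \frac{m(4-3m)}{4}$ by the positive quantity $4(m-2)/m$ turns it into $-3m^2+8m-8\le -3m^2+10m-8$, i.e.\ $0\le 2m$, which holds strictly; hence $p_+<r_1$ for large $x$ and $p_+\notin(r_1,r_2)$ as well. Since moreover $p_\pm<r_1<d$, one has $q_4\neq 0$, so $\scal(g_{x,\pm})(r)=2q_3+q_4r$ is a non-constant affine function vanishing at some interior $r_0\in(a,b)$; its zero locus is the $\bbT^2$-bundle over $\{r=r_0\}$, a $3$-dimensional hypersurface. I expect this step to be the main obstacle, precisely because $p_+$ and $r_1$ share the limit $m/2$ and only the subleading terms decide the sign.

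\emph{The limiting metrics.} By the remark after Proposition~\ref{prop:metrics with cone-angles}, each $g_{x,p}$ is the globalisation of the local Bach-flat metric of Theorem~\ref{thm:Local existence and uniqueness} with $as(a)$ given by~(\ref{eq:matching cone angle and curvature}). I would substitute $p=p_\pm(x)$ and compute the limits: along $p_+\to m/2$ the leading $x^2$-term of the numerator cancels, leaving $as(a)\sim-\tfrac{24m}{(m-2)x^2}\to 0$, whereas along $p_-\to 0$ the numerator is $\sim-12mx^2$ and the denominator $\sim\tfrac{3m}{m-2}x^2$, giving $as(a)\to-4(m-2)$. Because the coefficients $q_0,q_1,q_3,q_4$ are the explicit rational functions of $(x,p)$ in~(\ref{eq:q's for cone angle}), and these converge to the coefficients~(\ref{eq:q's for local metrics}) of the local metric with the limiting value of $as(a)$, while $\pol(r)=r^2-q(r)$ stays positive on any compact interval $[a,R]$ with $R<b$ (Lemma~\ref{lem:Convexity}), the metric $g_{u_h}$ and all its derivatives converge uniformly on compact subsets of $\bbH_m\setminus S_b\cong\mathcal{O}(-m)$. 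Finally I would identify the limits through Corollary~\ref{cor:scalar flat}: the value $as(a)=0$ gives the complete scalar-flat (hence ALE, by quartic volume growth) K\"ahler metric $g_+$, and $as(a)=-4(m-2)$ gives the complete negative K\"ahler--Einstein metric $g_-$ of exponential volume growth.
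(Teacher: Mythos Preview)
Your proof is correct and follows essentially the same route as the paper: analyse the quadratic $Ap^2+Bp+C=0$ asymptotically in $x$, compare $p_\pm$ with $r_1$ via Proposition~\ref{proposition_admissible}, and identify the limits through Corollary~\ref{cor:scalar flat}. The only differences are cosmetic: you use Vieta's formulas where the paper applies the resolvent formula directly to obtain $p_+=\tfrac{m}{2}-\tfrac{3m^3-8m^2+8m}{4(m-2)}x^{-1}+O(x^{-2})$ and $p_-=\tfrac{3m}{m-2}x^{-1}+O(x^{-2})$, and for the limiting metrics you pass through the $as(a)$ formula~(\ref{eq:matching cone angle and curvature}) while the paper instead takes the limit of the polynomials $\tilde{\pol}_{x,\pm}$ and reads off $\tilde{\pol}_+(t)=(t-1)(t+m-1)$, $\tilde{\pol}_-(t)=\tfrac{t-1}{3}\bigl((m-2)t^2+(m+1)t+m+1\bigr)$. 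Both pairs of computations are equivalent. One small point you leave implicit is the positivity of $p_\pm$ for large $x$; the paper notes $B<0$ and $AC>0$ so that both $-B\pm\sqrt{B^2-4AC}$ are positive, which is immediate from your Vieta expansions as well.
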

	\begin{proof}
	 Consider equation (\ref{eq:quadratic equation in p}) and the expressions that follow for $A,B,C$ in Proposition \ref{prop:metrics with cone-angles}. As $x$ tends to infinity we have 
\begin{align*}
		A &=2x^4  ( m-2 ) + O\left(x^3\right) \\
		B & = -m(m-2)x^4 +O\left(x^3\right)  \\
		C & = 3 m^{2}x^{3} + O\left(x^2\right) ,
	\end{align*}
where again $O(f(x))$ means a function which in absolute value is bounded by a constant times $f(x)$.  We see that for $x$ large enough, $B^2-4AC$ is positive. Furthermore, for sufficiently large $x$ we have $B<0$ and $AC>0$. Hence, both $-B \pm \sqrt{B^2-4AC}$ are positive and the two roots $p_\pm(x,m)$ of $Ap^2+Bp+C$ are positive. 
To find the asymptotic behaviour of $p_{\pm}(x,m)$ we simply apply the resolvent formula to equation  (\ref{eq:quadratic equation in p}) using the explicit formulas for $A$, $B$ and $C$ in Proposition \ref{prop:metrics with cone-angles}. We find that 
		\begin{align*}
			p_+(x,m) & = \frac{m}{2} - \frac{3m^3-8m^2+8m}{4m-8} \frac{1}{x} + {O\left(\frac{1}{x^2}\right)} \\
			p_-(x,m) & = \frac{3m}{m-2} \frac{1}{x} + {O\left(\frac{1}{x^2}\right),} 
		\end{align*} 
		{where $O\left(\frac{1}{x^2}\right)$ denotes a function which is bounded in absolute value by a constant times $\frac{1}{x^2}.$} In particular, $\lim_{x \to + \infty} p_-(x,m) =0$ while $\lim_{x \to + \infty} p_+(x,m) = \frac{m}{2}$ as claimed.
		
		By Proposition \ref{proposition_admissible}, the scalar curvature of the corresponding metric is nowhere vanishing if and only if $p_{\pm}(x,m) \in (r_1,r_2)$. However, for large $x \gg 1$, we have
		$$r_1 = \frac{m}{2} - m \left( \frac{3m}{4} -1 \right) \frac{1}{x} + O\left(\frac{1}{x^2}\right).$$
		Hence, we find that for large $x \gg 1$ 
		$$p_+(x,m),{p_-(x,m)}<r_1$$
		and so the scalar curvature must vanish somewhere.
		
		We can formally take the limit as $x \to +\infty$ of the two $1$-parameter families of metrics in Corollary \ref{cor:m>2}. Replacing $p=p_{\pm}(x,m)$ into the formula for the coefficients of the polynomials ${\pol_{x,\pm}}$ given by equation (\ref{eq:q's for cone angle}) and taking the limit as $x \to +\infty$ gives the polynomials 
		\begin{align*}
			\tilde{\pol}_{+}(t) & =(t-1)(t+m-1) \\
			\tilde{\pol}_{-} (t) & = \frac{t-1}{3} \left( (m-2)t^2 + (m+1)t + m+1 \right).
		\end{align*}
		Then, as $\tilde{\pol}_+$ is a polynomial of degree two, it follows from by (i.b) in Theorem \ref{prop:Globalizing} that the resulting metric is complete with quartic volume growth. In addition, for such a metric we have $q_3=0=q_4$ as the terms of order $3$ and $4$ in $\tilde{\pol}_+(t)$ vanish. Hence, $s(a)=0$ and the resulting metrics coincides with the scalar-flat ALE ones from Corollary \ref{cor:scalar flat}.
		
		As for $\tilde{\pol}_-$, it is a polynomial of degree $3$ and the same Theorem \ref{prop:Globalizing} implies that it gives a complete metric with exponential volume growth. On the other hand, by looking at the coefficients of $\tilde{\pol}_-(t)$ we find that $a^2 q_0=\frac{m+1}{2}$, $q_1=0$, $\frac{aq_3}{6}=-\frac{m-2}{3}$, and $q_4=0$. Hence, we have that $s(a)=2q_3+q_4=- \frac{-4(m-2)}{a}$ which yields
		$$a s(a) = -4(m-2).$$
		Such a limiting metric corresponds to the negative K\"ahler--Einstein metric in the second bullet of Corollary \ref{cor:scalar flat}.
\end{proof}

\section{Einstein metrics}\label{sec:cke}
In this section we shall use the Bach-flat K\"ahler metrics we have constructed to study the Einstein metrics they give rise to through Derdzi\'nski's Theorem. 

\subsection{Cone angle metrics on $\mathbb{CP}^2 \# \overline{ \mathbb{CP}^2}$ and the Taub-bolt metric}
\subsubsection{Existence}
The Bach-flat K\"ahler metrics we have constructed in Proposition \ref{cor:m=1} have nowhere vanishing scalar curvature.

\begin{prop}\label{prop: Einstein cone angle}
	For any $\beta<4\pi$, there is an Einstein metric $g^\beta$ on $\mathbb{CP}^2 \# \overline{ \mathbb{CP}^2}$ with a cone angle $\beta$ along an $\Sigma\simeq S^2$ with self-intersection $1$. Such metrics are conformal to the Bach-flat K\"ahler metrics from Proposition \ref{cor:m=1}.
	
	In addition, as $\beta \to 0$, the metrics $g^{\beta}$ converge uniformly with all derivatives to a complete Ricci-flat metric on $(\mathbb{CP}^2 \# \overline{ \mathbb{CP}^2} ) \backslash \Sigma$ with cubic volume growth.
\end{prop}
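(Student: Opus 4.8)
The plan is to produce the metrics $g^\beta$ by applying Derdzi\'nski's Theorem~\ref{Thmderd2} to the $U(2)$-invariant Bach-flat K\"ahler metrics $g$ on $\bbH_1$ constructed in Proposition~\ref{cor:m=1}, which have everywhere positive scalar curvature. First I would set $g^\beta := (\scal(g))^{-2} g$; since $\scal(g)>0$ throughout $\bbH_1$, the conformal factor is smooth and bounded away from zero, so Theorem~\ref{Thmderd2} yields a globally defined Einstein metric away from the cone locus. Because the conformal factor is smooth and positive near $\Sigma = S_b$, the conformal change preserves the cone angle, so $g^\beta$ has the same cone angle $\beta = 2\pi p^{-1}$ along $\Sigma$ as $g$. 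The underlying space is $\bbH_1 = \mathbb{P}(\mO \oplus \mO(-1)) \cong \cp^2 \# \overline{\cp^2}$, and $\Sigma = S_b$ is the section at infinity, a $2$-sphere of self-intersection $+1$, giving the stated topology.

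To realise every $\beta \in (0,4\pi)$ I would show that the weights $p = p_\pm(x)$ from Proposition~\ref{cor:m=1} sweep out the full interval $(\tfrac12, +\infty)$, so that $\beta = 2\pi/p$ covers $(0,4\pi)$. Using the endpoint data of Proposition~\ref{cor:m=1}, the branch $p_+$ is continuous on $[x_0,+\infty)$ with $p_+(x_0) \approx 2.062$ and $\lim_{x\to+\infty} p_+(x) = \tfrac12$, so its image (an interval) contains $(\tfrac12, 2.062]$; the branch $p_-$ is continuous on $[x_0,3)$ with $p_-(x_0)\approx 2.062$ and, by the expansion $p_-(x) = -\tfrac32 (x-3)^{-1} + O(1)$, tends to $+\infty$ as $x\to 3^-$, so its image contains $[2.062, +\infty)$. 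The intermediate value theorem then provides, for each $\beta \in (0,4\pi)$, a parameter $(x,\pm)$ giving a Bach-flat K\"ahler metric of cone angle $\beta$, and hence the Einstein metric $g^\beta$.

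For the limit $\beta \to 0$, which forces $p \to +\infty$ and therefore occurs along the branch $p_-$ as $x \to 3^-$, I would invoke the convergence already recorded in Proposition~\ref{cor:m=1}: the K\"ahler metrics converge, smoothly on compact subsets of $\bbH_1 \setminus S_b$, to the complete finite-volume Bach-flat K\"ahler metric $g_\infty$ with $a\,s(a) = 6$ of subsection~\ref{ss:finite volume}. Its scalar curvature is $\scal(g_\infty)(r) = \tfrac{3}{a^2}(3a-r)$, strictly positive on $\bbH_1 \setminus S_b$ and vanishing only at the end $r = b = 3a$; hence the conformal factors converge and $g^\beta$ converges, smoothly and with all derivatives on compact subsets of $\bbH_1 \setminus \Sigma$, to $g_{TB} := (\scal(g_\infty))^{-2} g_\infty$. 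That $g_{TB}$ is Ricci-flat is then purely algebraic: evaluating the scalar curvature formula $S = 12 q_4^2 q_1 + 8 q_3^3 + 48 q_3 q_4$ of Proposition~\ref{prop:Conformal Einstein} at the coefficients $q_0 = \tfrac98 a^2$, $q_1 = -\tfrac34 a$, $q_3 = \tfrac{9}{2a}$, $q_4 = -\tfrac{3}{a^2}$ of $g_\infty$ gives $S = 0$.

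It remains to analyse the geometry of $g_{TB}$ near the end $r \to b$. Since $r = b$ is a double root of $\pol$, one has $\pol(r) \sim (b-r)^2$ there, so along the radial geodesic the distance behaves like $-\log(b-r)$ and the end is complete; meanwhile $\scal(g_\infty) \sim (b-r)$, so the conformal factor $(\scal)^{-2} \sim (b-r)^{-2}$ inflates this complete, finite-volume end into one of cubic volume growth. The cleanest conclusion is to identify $g_{TB}$ with the Taub-bolt metric, the complete Ricci-flat ALF metric on $\bbH_1 \setminus \Sigma$, using these explicit asymptotics together with the uniqueness in Theorem~\ref{thm:Local existence and uniqueness}; the detailed matching and the ALF/cubic-volume-growth asymptotics are carried out in \cite{BG}. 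I expect this last step — controlling the asymptotics of the conformally rescaled metric and matching it with Taub-bolt, rather than the essentially formal existence argument — to be the main obstacle.
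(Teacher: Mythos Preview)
Your proposal is correct and follows essentially the same route as the paper's own proof: apply Derdzi\'nski's Theorem~\ref{Thmderd2} to the Bach-flat K\"ahler metrics of Proposition~\ref{cor:m=1}, use the endpoint behaviour of $p_\pm$ and the intermediate value theorem to realise every weight $p\in(\tfrac12,+\infty)$, pass to the limit $p\to+\infty$ along the $p_-$ branch to obtain the finite-volume Bach-flat K\"ahler metric with $as(a)=6$, and verify $S=0$ via the formula of Proposition~\ref{prop:Conformal Einstein}. The only substantive difference is in the last step: the paper establishes completeness and cubic volume growth of the limiting Einstein metric by a direct computation along the radial geodesic $x_1=x_2=\ell/2$ (giving arc-length $u\sim(3a-\ell)^{-1}$ and $\mathrm{Vol}(B_\ell)\sim(3a-\ell)^{-3}\sim u^3$), whereas you sketch the asymptotics heuristically and lean on the identification with Taub-bolt via \cite{BG}. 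Since the proposition itself only claims completeness, Ricci-flatness, and cubic volume growth (the Taub-bolt identification is deferred to a separate remark and Appendix~\ref{diagonalizing_metric}), it would be cleaner to carry out that short explicit calculation rather than invoke an external uniqueness result.
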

\begin{proof}
		Let $x = \frac{b}{a} \geq 1$, and consider {$g_{x,\pm}$} the $U(2)$-invariant Bach-flat, cone angle K\"ahler metrics on $\mathbb{H}_1=\mathbb{CP}^2 \# \overline{ \mathbb{CP}^2}$ with weights {$p_{\pm}(x)$} along the divisor {$S_b=\Sigma$} given by Corollary \ref{cor:m=1}. Their scalar curvature {$\scal(g_{x,\pm})$} never vanishes and so the conformally related metrics {$(\scal(g_x))^{-2} g_x$} are Einstein by Derdzi\'nski's Theorem. Furthermore, because {$\scal(g_{x,\pm})$} is a smooth function, the metric {$(\scal(g_{x,\pm}))^{-2} g_{x,\pm}$} also has a cone angle along the {$\Sigma$}. Since $p_+(x)$ is decreasing and satisfies $p_+(x_0)=p_-(x_0)$ where $x_0$ is described in Corollary \ref{cor:m=1} {as the only positive root of $x^4+4x^3-14x^2-12x+9$}. Also $\lim_{x \to + \infty} p_+(x)=\frac{1}{2}$. On the other hand $p_-(x)$ is increasing and satisfies $\lim_{x \to 3^-}p_-(x)=+\infty$. This shows that all weights $p \in \left( \frac{1}{2} , +\infty \right)$ are achieved by the metrics {$g_{x,\pm}$} of Corollary \ref{cor:m=1} and so are by {$(\scal(g_{x,\pm}))^{-2} g_{x,\pm}$}. The same holds true for the conformally related Einstein metrics from Derdzi\'nki's theorem.
		
		Now we consider the limit of these metrics when $p \to + \infty$. We shall show that as $p \to +\infty$, the metrics converge smoothly on {the pre-image via $T_m$ composed with the moment map  of $$\{(x_1,x_2): x_1\geq0, \ x_2\geq0, a\leq\ x_1+x_2 <3a  \},$$ to a complete, Bach-flat K\"ahler metric of finite volume} $g$.  This metric arises from Theorem \ref{thm:Local existence and uniqueness} with $as(a)=6$, and is defined on the {pre-image via $T_m$ composed with the moment map  of 
		$$\{(x_1,x_2):x_1,x_2\geq 0, a\leq x_1+x_2<3a\}$$} 
 		and has positive scalar curvature.
		{Consider the formulas for the $q_i$ from equation (\ref{eq:q's for cone angle}) with $m=1$, $x=3$} and take the limit $p \to +\infty$. The limiting polynomial $\tilde{\pol}(t)=\frac{r^2-q(r)}{a^2}|_{r=at}$ given by
		$$\tilde{\pol}(t)=\frac{1}{8}(t+1)(t-1)(t-3)^2,$$
		which satisfies $q_0q_4=q_1q_3$. It yields a well defined Bach-flat K\"ahler metric for $t\in[1,3)$, i.e. $r \in [a,3a)$. We are in item's (ii.b) of Theorem \ref{prop:Globalizing} regime and so the resulting metric has a complete end as $r \to 3a$ and finite volume. Furthermore, Proposition \ref{proposition_admissible} guarantees that the scalar curvature $\scal(g)$ never vanishes. The scalar curvature along $S_a,$ can be computed from the explicit formulas for $q_3$, $q_4$ to give
		$$s(a)=2q_3+q_4 a=\frac{6}{a} .$$
		In fact, we have $a s(r)=9- \frac{3r}{a}$ which is positive for $r\in [a,3a).$

		As for the limiting scalar curvature, we find that it is given by the formula 
		$$ \scal(g)(r)= \frac{3}{a} \left(3- \frac{r}{a} \right),$$ 
		which is positive for $r\in [a,3a)$. 
		
		From Derdzi\'nski's Theorem the metric $\scal(g)^{-2} g$ is Einstein. Furthermore, its scalar curvature can be computed from the formula $S= 12 q_4^2 q_1 + 8 q_3^3 + 48 q_3 q_4 $ yielding $S=0$. Hence, the Einstein metric is actually Ricci-flat.
		
		Proceeding as in the proof of Theorem \ref{prop:Globalizing} we can consider a geodesic ray parameterised by $\ell \in (\ell_0 , +\infty)$ and given by $\ell \mapsto \curv(\ell)$ with $x_1(\curv(\ell))=\frac{\ell}{2}=x_1(\curv(\ell))$ and both $\theta_i(\curv(\ell))$ constant for $i=1,2 $. Here
		\begin{align*}
			\curv^* (\scal^{-2}g_{u_q}) = \scal^{-2}(\curv(\ell)) \frac{\ell}{ \pol(\ell)}   d\ell^2 =  \frac{1}{ 9 \left(3- \frac{\ell}{a} \right)^2 }  \frac{\ell}{ \pol(\ell)}   d\ell^2,
		\end{align*}
		and the length of the geodesic ray can be computed to be
		\begin{align}
			\mathrm{Length}(\curv) & = \int_{a}^{3a} \frac{1}{ 3 \left(3- \frac{\ell}{a} \right) }  \sqrt{\frac{\ell}{ \pol(\ell)}} d \ell =+\infty,
		\end{align}
		as
		\begin{align}
			 \int_{a}^{3a} \frac{d \ell}{\left( 3a- \ell \right)^2} =+\infty  .
		\end{align}
		The metric is therefore complete. We can also use this computation to find the geodesic length $u$ along $\curv$. This must satisfy $$du^2 = \frac{\ell d\ell^2}{\scal^2(\ell) \pol(\ell)} $$ 
		hence
		$$u \sim \frac{1}{3a - \ell}.$$
		On the other hand, the volume form for of the metric $(\scal{g})^{-2}g$ is given by
		$$ {\scal(g)}^{-4} dx_1 \wedge dx_2 \wedge d\theta_1 \wedge d\theta_2 , $$
		and changing coordinates to $r=x_1+x_2$, $R=x_2-x_1$ we can compute the volume of $B_\ell,$ the pre-image via $T_m$ composed with the moment map of the region 
		$$\{(x_1,x_2): x_1>0, \ x_2>0, \ a< x_1+x_2 <\ell  \}.$$ It is given by
		\begin{align*}
			\mathrm{Vol}(B_\ell)  = \frac{4\pi^2}{2} \int_a^\ell dr \int_{-r}^{r}  \frac{dR}{{\scal(g)}^{4}} = 2 \pi^2 \int_a^\ell   \frac{2r dr}{{\scal(g)}^{4}} 
		\end{align*}
		which is of the order of $\frac{1}{(3a-\ell)^3}$ i.e. $u^3$ showing that the resulting Einstein metric has cubic volume growth.
\end{proof}

\begin{remark}
	The Ricci-flat metric in $\CP^2\# \overline{\CP^2} \backslash \Sigma$ obtained as the limit $p\to +\infty$ in the previous Proposition is known as the smooth Taub-bolt metric (see \cite{BG}). The fact that our limiting metric must coincide with the smooth Taub-bolt metric follows from the fact that it is Ricci-flat and ALF toric as we show in appendix \ref{diagonalizing_metric} together with the uniqueness results in \cite{BG}. 
\end{remark}

\subsubsection{The Hitchin-Thorpe inequality}
The Hitchin-Thorpe inequality was used by LeBrun to construct manifolds not admitting Einstein metrics (see \cite{l}). It was shown by Atiyah-LeBrun (see \cite{al}) that this inequality can be generalised to the edge-cone angle setting. 
\begin{thm}[Atiyah-LeBrun]
	Let $M$ be a smooth 4-manifold and let $\Sigma$ be an embedded smooth curve in $M$. In $M$ admits and edge-cone metric with cone angle $2\pi\beta$ along $\Sigma$ then we have
	$$
	2\chi(M)\pm3\tau(M)\geq (1-\beta)(2\chi(\Sigma)\pm(1+\beta)[\Sigma]\cdot[\Sigma]),
	$$
	where $\chi$ denotes the Euler characteristic and $\tau$ denotes the signature.
\end{thm}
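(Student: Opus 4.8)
The plan is to generalise the derivation of the classical Hitchin--Thorpe inequality: combine edge-cone versions of the Gauss--Bonnet and Hirzebruch signature formulas and then impose the Einstein condition (which is the hidden hypothesis here, since the inequality is an obstruction to Einstein edge-cone metrics).

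First I would recall the smooth Chern--Gauss--Bonnet and signature representations of the two characteristic numbers of a compact oriented Riemannian four-manifold. Writing $W^{\pm}$ for the (anti-)self-dual parts of the Weyl tensor, $s$ for the scalar curvature and $\mathring{r}$ for the trace-free Ricci tensor, these read
\begin{align*}
\chi(M) &= \frac{1}{8\pi^2}\int_M \left( |W^+|^2 + |W^-|^2 - \tfrac{1}{2}|\mathring{r}|^2 + \tfrac{s^2}{24} \right) dV, \\
\tau(M) &= \frac{1}{12\pi^2}\int_M \left( |W^+|^2 - |W^-|^2 \right) dV.
\end{align*}

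Second, and this is the crux of the matter, I would establish the edge-cone corrections to these identities. Near $\Sigma$ the metric is modelled on a flat two-dimensional cone of total angle $2\pi\beta$ times $\Sigma$, and the Pfaffian and Hirzebruch $L$-forms acquire distributional contributions supported along $\Sigma$. The outcome is the pair of identities
\begin{align*}
\chi(M) &= \frac{1}{8\pi^2}\int_M \left( |W^+|^2 + |W^-|^2 - \tfrac{1}{2}|\mathring{r}|^2 + \tfrac{s^2}{24} \right) dV + (1-\beta)\,\chi(\Sigma), \\
\tau(M) &= \frac{1}{12\pi^2}\int_M \left( |W^+|^2 - |W^-|^2 \right) dV + \tfrac{1}{3}(1-\beta^2)\,[\Sigma]\cdot[\Sigma].
\end{align*}
The Euler correction $(1-\beta)\chi(\Sigma)$ is the four-dimensional analogue of the angle-defect term in the two-dimensional cone Gauss--Bonnet theorem, while the signature correction is governed by the self-intersection of $\Sigma$ and is extracted from an $L$-polynomial computation on the cross-section of the transverse cone.

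Third, I would form the combination $2\chi(M)\pm 3\tau(M)$. Using the two corrected identities, the bulk integrand becomes a positive multiple of $2|W^{\pm}|^2 + \tfrac{s^2}{24} - \tfrac{1}{2}|\mathring{r}|^2$, while the singular terms assemble into $2(1-\beta)\chi(\Sigma) \pm (1-\beta^2)\,[\Sigma]\cdot[\Sigma]$. Here the Einstein hypothesis enters decisively: away from $\Sigma$ one has $\mathring{r}\equiv 0$, so the bulk integrand is pointwise non-negative and may be discarded. This yields
$$2\chi(M)\pm 3\tau(M) \geq 2(1-\beta)\,\chi(\Sigma) \pm (1-\beta^2)\,[\Sigma]\cdot[\Sigma],$$
and factoring $1-\beta^2=(1-\beta)(1+\beta)$ gives precisely the stated inequality. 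The step I expect to be the main obstacle is the rigorous derivation of the edge-cone signature defect in the second paragraph: whereas the Euler correction follows from a direct angle-defect computation, the signature term requires controlling the $L$-form in the collar around the singular locus and identifying the resulting boundary contribution with $[\Sigma]\cdot[\Sigma]$, which hinges on a careful analysis of the curvature asymptotics of edge-cone metrics and is the technical heart of the Atiyah--LeBrun argument.
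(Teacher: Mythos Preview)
The paper does not actually prove this theorem: it is quoted as a result of Atiyah--LeBrun \cite{al} and then applied to the specific case $M=\mathbb{CP}^2\#\overline{\mathbb{CP}^2}$, so there is no in-paper proof to compare against. That said, your sketch is an accurate outline of the Atiyah--LeBrun argument itself: the edge-cone Gauss--Bonnet and signature formulas with defect terms $(1-\beta)\chi(\Sigma)$ and $\tfrac{1}{3}(1-\beta^2)[\Sigma]\cdot[\Sigma]$, followed by forming $2\chi\pm 3\tau$ and dropping the non-negative bulk integrand under the Einstein condition. You are also right to flag that the Einstein hypothesis is missing from the paper's stated version of the theorem --- as written the inequality is false for general edge-cone metrics, and the paper is tacitly using it only for the Einstein edge-cone metrics constructed in Proposition~\ref{prop: Einstein cone angle}.
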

Edge-cone angle metrics were defined in \cite{al}. Using the theorem's notation, a metric on $M$ is said to be edge-cone with cone angle $2\pi\beta$ along $\Sigma$ if around each point of $\Sigma$ there are coordinates $(y^1,y^2,y^3,y^4)$ in which $\Sigma$ is given by $y^1=y^2=0$ and
$$
g=d\rho^2+\beta^2\rho^2(d\theta+a_3dy^3+a_4dy^4)^2+\sum_{i,j=3}^4a_{ij}dy_idy_j+\rho^{1+\epsilon}h
$$
where $y^1=\rho\cos(\theta)$ and $y^2=\rho\sin(\theta).$ Moreover $a_i$ and $a_{ij}$ are smooth functions and $h$ is a smooth $2$-tensor in the normal directions to $\Sigma.$

{These edge-cone metrics include K\"ahler cone angle metrics as defined by Donaldson in \cite{dca}}. Our metrics are conformal to K\"ahler cone angle metrics and are therefore also of Edge-cone type as multiplying by a non-vanishing conformal factor preserves the form for $g$ above. 
Therefore the above theorem applies to our setting in this paper. Let $M=\cp^2\# \overline{\cp^2}.$ This has Euler Characteristic $4$ and signature $0.$ In fact, the second cohomology is spanned by the fibre of $\mathbb{P}(\mathcal{O}(-m)\oplus \mathcal{O})\rightarrow \cp^1$ together with the projectivization of the section $(0,1)$ of  $\mathcal{O}(-m)\oplus \mathcal{O}\rightarrow \cp^1.$ The intersection pairing in this basis for the cohomology is given by
$$
\begin{pmatrix}
	0&1\\
	1&m\\
\end{pmatrix}.
$$
Assume that $T_m$ of the moment polytope of $M$ is $P_H.$ The pre-image of the edge $x_1+x_2=b$ via $T_m$ composed with the moment map defines the divisor $\Sigma=S_b$ in $M.$ This is a $\cp^1$ and therefore has Euler Characteristic $2.$ Its self-intersection can be calculated via toric geometry. In fact the interior normal to edge $x_1+x_2=b$ in $P_H$ is $(-1/m,-1/m).$ We can write it as a linear combination of the normals to adjacent edges. Namely
$$
(1,0)+(0,1)=-m(-1/m,-1/m).
$$ 
It follows that 
$$
[\Sigma]\cdot[\Sigma]=m.
$$
The Hitchin-Thorpe inequality becomes 
$$
8\geq(1-\beta)(4\pm m(1+\beta))
$$
We are interested in the case when $m=1$ as this is the only case when our Einstein metrics are smooth everywhere except at the the cone angle divisor. This equality reads 
$$
8\geq(1-\beta)(4\pm (1+\beta))
$$
which yields $0<\beta\leq 5$.

It is thus interesting to understand what cone angles are covered by our construction and compare with the Hitchin-Thorpe inequality. To do this we must understand the range of the cone angle arising in Proposition (\ref{cor:m=1}). From the proof of the proposition we know that the cone angles for which we construct Einstein metrics (which are the same cone angles as the conformally related extremal metrics)
are 
\begin{itemize}
	\item For $x_0<x$
	\[
	\scalebox{.9}
	{$p_+(x)=\frac{-3 +2 x-6x^2-6x^3 + x^4 + (1 + 4 x + x^2)\sqrt{9 - 12 x - 14 x^2 + 4 x^3 + x^4}}{2(-3 + x) (-2+3 x+3 x^2+2 x^3)}.$}
	\]
	This is actually defined at $x=3.$ It is decreasing for $x>x_0$ and has limit $1/2$ at infinity. Therefore this cone angle takes values in $(1/2,p_+(x_0)].$
	\item For $x_0<x<3$
	\[
	\scalebox{.9}
	{$p_-(x)=\frac{-3 +2 x-6x^2-6x^3 + x^4 - (1 + 4 x + x^2)\sqrt{9 - 12 x - 14 x^2 + 4 x^3 + x^4}}{2(-3 + x) (-2+3 x+3 x^2+2 x^3)}.$}
	\]
	This is increasing and tends to infinity at $3.$ Therefore this cone angle takes values in $[p_-(x_0),\infty).$ 
\end{itemize}
We can thus show the following proposition
\begin{prop}
	There is an Einstein edge-angle metrics on $\cp^2 \# \overline{\cp^2}$ with cone angle along the $(+1)$-sphere $S_b,$ for every cone angle in $(0,4\pi).$
\end{prop}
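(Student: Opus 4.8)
The plan is to translate the range of weights $p$ realised by the Bach-flat Kähler metrics of Proposition \ref{cor:m=1} into a range of cone angles via the relation $\beta = 2\pi p^{-1}$, and then pass to the conformally related Einstein metrics through Derdzi\'nski's Theorem \ref{Thmderd2}.

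First I would assemble the full range of weights from the two branches recorded above. The function $p_+$ is defined for $x > x_0$, is decreasing, satisfies $\lim_{x \to +\infty} p_+(x) = \tfrac12$, and is continuous up to $x_0$; hence it attains every value in $(\tfrac12, p_+(x_0)]$. Similarly $p_-$ is defined for $x_0 < x < 3$, is increasing, and satisfies $\lim_{x\to 3^-} p_-(x) = +\infty$, so it attains every value in $[p_-(x_0), +\infty)$. The key bookkeeping point is that $p_+(x_0) = p_-(x_0)$, so the two closed-at-$x_0$ ranges abut and their union is exactly $(\tfrac12, +\infty)$. Thus every weight $p \in (\tfrac12, +\infty)$ is realised by some metric $g_{x,\pm}$ of Proposition \ref{cor:m=1}.

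Next I would pass to the Einstein side. By Proposition \ref{cor:m=1} each of these Bach-flat Kähler metrics on $\bbH_1 = \cp^2 \# \overline{\cp^2}$ has everywhere positive, in particular nowhere vanishing, scalar curvature, so Derdzi\'nski's Theorem \ref{Thmderd2} applies and $(\scal(g_{x,\pm}))^{-2} g_{x,\pm}$ is Einstein. Because the conformal factor $\scal(g_{x,\pm})^{-2}$ is a smooth, strictly positive function, multiplying by it preserves the edge-cone form displayed earlier; the Einstein metric therefore carries the same cone angle $\beta = 2\pi p^{-1}$ along $S_b$ as the underlying Kähler metric. Since $[\Sigma]\cdot[\Sigma] = m = 1$ for $\bbH_1$, the divisor $S_b = \Sigma$ is the asserted $(+1)$-sphere.

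To finish I would convert the weight range into the cone-angle range: as $p$ ranges over $(\tfrac12, +\infty)$, the cone angle $\beta = 2\pi p^{-1}$ is a continuous decreasing function of $p$ with $\beta \to 4\pi^-$ as $p \to \tfrac12^+$ and $\beta \to 0^+$ as $p \to +\infty$, so $\beta$ sweeps out all of $(0, 4\pi)$. I do not anticipate a genuine obstacle here, as the argument is essentially the assembly of results already proved: the only points requiring care are the monotonicity and endpoint limits of $p_\pm$ (recorded in Proposition \ref{cor:m=1}) and the elementary observation that $p_+(x_0) = p_-(x_0)$ makes the two weight-ranges meet, ensuring no value of $p > \tfrac12$, and hence no cone angle in $(0,4\pi)$, is missed.
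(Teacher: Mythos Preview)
Your proposal is correct and follows essentially the same approach as the paper: assemble the weight ranges of $p_{+}$ and $p_{-}$ from Proposition~\ref{cor:m=1}, use $p_{+}(x_0)=p_{-}(x_0)$ to see that their union is $(\tfrac12,+\infty)$, apply Derdzi\'nski's theorem with the nowhere-vanishing scalar curvature to preserve the cone angle, and then convert via $\beta=2\pi p^{-1}$ to the interval $(0,4\pi)$. This is exactly the argument the paper gives (it appears in the proof of Proposition~\ref{prop: Einstein cone angle} and the discussion preceding the present statement).
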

The Hitchin-Thorpe inequality forbids the existence of Einstein Edge-angle metrics with cone angle greater than $10\pi.$ The natural question is then 
\begin{question*}
	Are there Einstein edge-angle metrics on $\cp^2 \# \overline{\cp^2}$ with cone angle in $[4\pi,10\pi]$ along a $(+1)$-sphere?
\end{question*}

\subsection{Poincar\'e-Einstein metrics} 
\subsubsection{Existence} We have seen in Proposition \ref{cor:m>2} that for $m>3$ and $x$ large the extremal K\"ahler metrics we have constructed on $\bbH_m$ in the cohomology class determined by $x$ have curvature vanishing along a hypersurface as do the metrics from Corollary \ref{cor:(iii)}. The corresponding Einstein metrics are thus defined on certain open manifolds whose geometry we want to understand.

We start with a few definitions which are due to Penrose (see \cite{an}).
\begin{defn}
Let $N$ be a manifold such that $\bar{N}$ is a manifold with boundary. 
\begin{itemize}
\item A smooth function $v$ on $\bar{N}$ is said to {\it define} $\partial \bar{N}$ if $dv\ne 0,$ $v=0$ in $\partial \bar{N}$ and $v>0$ in $N$, the interior of $\bar{N}.$ 
\item A metric $h$ on $N$ is said to be {\it conformally compact} if there is a smooth defining function for $\partial \bar{N}$ on $\bar{N}$ such that $v^2h$ extends to a smooth metric on $\bar{N}.$ 
\item The conformal class of the restriction of $v^2h$ to $\partial \bar{N}$ is independent of the choice of the defining function $v$ and is called {\it conformal infinity}. 
\item The metric $h$ on $N$ is said to be a metric filling of the conformal class of the restriction of $v^2h.$
\item If additionally such a metric is Einstein then it is said to be of {\it Poincar\'e- Einstein} type.

\end{itemize}
\end{defn}
If a metric $g$ on $\bar{N}$ is smooth and $v$ is a smooth defining function for $\partial N$ then $h=v^{-2}g$ is known to be complete and it is conformally compact (see \cite{an} and references therein). 
An important problem in Riemannian geometry is to understand to which extent conformal classes of metrics can be filled in the above sense by Poincar\'e-Einstein metrics. Both the existence and the uniqueness questions regarding such fillings have been extensively studied. The existence question was introduced by Fefferman and studied by Graham (\cite{fg}), by Graham and Lee (\cite{gl}) by Anderson in several papers (\cite{an2} for instance) and more recently by Gursky and Sz\'ekeyihidi (\cite{gs}) to mention just a few. It plays an important role in Physics through the Ads/CFT correspondence. 

The metrics we shall construct yield examples of such fillings. We can extract from them complete conformally compact metrics that are in fact Poincar\'e- Einstein metrics. 

Consider one of the K\"ahler, Bach-flat metrics $g$ we construct in Proposition \ref{cor:m>2} with cone angle on $\bbH_m$ or the incomplete metrics on the total space of $\mO(-m)$ from Corollary \ref{cor:(iii)} and suppose that their scalar curvature ${\scal(g)}$ vanishes. Recall that this happens {along the pre-image via $T_m$ composed with the moment map of $$\{(x_1,x_2): x_1>0, \ x_2>0,  \ x_1 + x_2 = -\frac{2q_3}{q_4} \},$$ where $q_3,q_4$ are solutions of the system in equation (\ref{eq:q's for cone angle}).} We consider the polytope
$$
Q=\{(x_1,x_2): x_1>0, \ x_2>0, \ x_1+x_2 \geq a , \ x_1 + x_2 \leq  -\frac{2q_3}{q_4} \}.
$$
Let $\bar{N}$ the pre-image via $T_m$ composed with the moment map of $Q$ in $\bbH_m.$ This is a disk bundle inside $O(-m)\rightarrow \cp^1.$ The boundary $\partial \bar{N}$ of this disk bundle is in fact the manifold $S^3/\bbZ_m$ which is a Lens space. In this setting either the scalar curvature ${\scal(g)}$ or minus the scalar curvature $-{\scal(g)}$ gives a boundary defining function for $\partial \bar{N}$ and the metric
$$
{\scal(g)^{-2}g}
$$ 
is smooth, complete and conformally compact on $N$. In addition, it follows from Derdzi\'nski's Theorem \ref{Thmderd2} that it is Einstein. Recall from Proposition \ref{prop:Conformal Einstein} that in this case, the scalar curvature $S$ of $\scal(g)^{-2}g$ is constant and given by
\begin{align*}
	S & = 12 q_4^2 q_1 + 8 q_3^3 + 48 q_3 q_4.
\end{align*}
where $q_1,q_3,q_4$ are solutions of the system in equation (\ref{eq:q's for cone angle}) and we can write them explicitly in terms of our data. 

\begin{prop}\label{prop:noncompact Einstein metrics}
	The following assertions hold:
	\begin{itemize}
		\item[(a)] Let $m \in \N$, then, up to scaling, there is an explicit $1$-parameter family of $U(2)$-invariant Poincar\'e-Einstein metrics on a disk-bundle inside the total space of $\mathcal{O}(-m)$ over $\cp^1$. On such a disk bundle, the metrics are conformal to incomplete Bach-flat K\"ahler metrics on the total space of $\mathcal{O}(-m)$.
		
		\item[(b)] For each $m\geq 3$ there are two extra $1$-parameter families of Poincar\'e-Einstein metrics on $\mathcal{O}(-m)$ which are conformal to Bach-flat K\"ahler metrics extending over $\mathbb{H}_m$ with cone angles along the divisor at infinity $S_b$.
	\end{itemize}	
	The scalar curvature of all these metrics is negative.
\end{prop}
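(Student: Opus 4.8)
The plan is to apply, for each of the two families of Bach-flat K\"ahler metrics identified earlier, the conformal-compactification construction set up in the discussion preceding the statement, and then to invoke Derdzi\'nski's Theorem \ref{Thmderd2}. For part (a), the relevant input is Corollary \ref{cor:(iii)}: fixing $m \in \N$ and taking $a>0$, $s(a)>0$ with $a s(a) > y_m$, the associated Bach-flat K\"ahler metric $g$ is incomplete on $\mathcal{O}(-m)$, and since $s(a)>0$, Lemma \ref{lem:Vanishinh of scalar curvature} guarantees that $\scal(g)$ vanishes exactly along the hypersurface $\{ r/a = 1 + 4m/(a s(a) + 4(m-2)) \}$. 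For part (b), the input is instead Proposition \ref{cor:m>2}: for $m \geq 3$ and $x \gg 1$ the two metrics $g_{x,\pm}$ on $\bbH_m$ are Bach-flat K\"ahler with cone angle along $S_b$, and their scalar curvature vanishes along a three-dimensional hypersurface. In every case I would let $\bar N$ be the pre-image (via $T_m$ composed with the moment map) of the truncated polytope $Q$ lying between the zero section and this vanishing hypersurface, a disk bundle inside $\mathcal{O}(-m)$ whose boundary is, as recalled in the preceding discussion, the Lens space $S^3/\bbZ_m$.

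Next I would verify that $h := \scal(g)^{-2} g$ is Poincar\'e--Einstein on the interior $N$. Since $g$ is a smooth Bach-flat K\"ahler metric defined on a neighbourhood of the vanishing hypersurface, Derdzi\'nski's Theorem \ref{Thmderd2} shows $h$ is Einstein wherever $\scal(g) \neq 0$, in particular on $N$. To see $h$ is conformally compact, set $v := |\scal(g)|$; this is a defining function for $\partial \bar N$ because $v=0$ precisely on the hypersurface, $v>0$ in $N$, and $v^2 h = g$ extends smoothly across $\partial \bar N$. The nondegeneracy $dv \neq 0$ on $\partial \bar N$ follows from $\scal(g)(r) = 2q_3 + q_4 r$ being affine in $r$ with $q_4 \neq 0$ together with $dr = dx_1 + dx_2 \neq 0$ on the interior of the polytope; the needed inequality $q_4\neq 0$ can be read off from the explicit formulae (\ref{eq:q's for local metrics}) and (\ref{eq:q's for cone angle}), and indeed failure of $q_4\neq 0$ would force $\scal(g)$ to be constant, contradicting that it vanishes along a hypersurface. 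Conformal compactness via a smooth defining function then forces completeness, so $h$ is Poincar\'e--Einstein. Varying the scale-invariant parameter $a s(a) \in (y_m, +\infty)$ in part (a), respectively $x$ in part (b), yields the asserted one-parameter families (two families in (b), one for each sign $\pm$).

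Finally, for the negativity of the scalar curvature $S$ of $h$ there are two routes. The conceptual one is that a conformally compact Einstein four-manifold is asymptotically hyperbolic: with $v$ a smooth defining function, the sectional curvatures of $h$ tend to $-|dv|_g^2 < 0$ at conformal infinity, so its Einstein constant, and hence $S$, must be negative. Alternatively one substitutes the parameter values into the explicit formula $S = 12 q_4^2 q_1 + 8 q_3^3 + 48 q_3 q_4$ from Proposition \ref{prop:Conformal Einstein} and checks the sign directly. I expect the second computation to be the only genuinely laborious step, since the $q_i$ in (\ref{eq:q's for local metrics}) and (\ref{eq:q's for cone angle}) are unwieldy rational expressions; the conceptual argument sidesteps it. Thus the main obstacle here is bookkeeping rather than a conceptual gap, since the essential geometric content---Einstein via Derdzi\'nski, conformal compactness with its Lens-space boundary, and completeness---is already assembled in the discussion preceding the statement.
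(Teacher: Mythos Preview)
Your proposal is correct and follows the same route as the paper: apply Derdzi\'nski's Theorem~\ref{Thmderd2} to the Bach-flat K\"ahler metrics supplied by Corollary~\ref{cor:(iii)} for part~(a) and by Proposition~\ref{cor:m>2} for part~(b), restricted to the region bounded by the hypersurface $\{\scal(g)=0\}$, exactly as in the discussion preceding the statement. The only point of departure is the sign of $S$: the paper computes it directly, obtaining $S=-\tfrac{(as(a))^2}{a^3}\bigl(as(a)+6(m-2)\bigr)$ in case~(a) and asserting that $12q_4^2 q_1+8q_3^3+48q_3 q_4<0$ for $x\gg 1$ in case~(b), whereas you also offer the conceptual argument that any conformally compact Einstein metric is asymptotically hyperbolic and hence has negative Einstein constant. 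Both arguments are valid; yours avoids the algebra at the cost of invoking a standard fact from the Poincar\'e--Einstein literature, while the paper's explicit formula has the side benefit of pinning down the exact value of $S$ in terms of the parameter.
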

\begin{proof}
	The first family of metric comes applying Derdzi\'nski's Theorem \ref{Thmderd2} to the metrics from Corollary \ref{cor:(iii)}. Let $m \in \N$, $a s(a) \gg 1$ and $g$ be the Bach-flat K\"ahler metrics from Corollary \ref{cor:(iii)}. Corollary \ref{cor:(iii)} implies that for all sufficiently large $a s(a)$, the metric $g$ is defined {on the pre-image via $T_m$ composed with the moment map of $$
\{(x_1,x_2): x_1>0, \ x_2>0, \ x_1+x_2 \geq a  \}.
$$ }
and its scalar curvature vanishes along the hypersurface given by the equation
	$$\frac{r}{a}=  1 + \frac{4m}{a s(a) +4 (m-2)}.$$
	Then, Derdzi\'nski's Theorem \ref{Thmderd2} implies that ${\scal(g)^{-2}g}$ is Einstein. We have
	$$S= -\frac{(a s(a))^2}{a^3}\left( a s(a) + 6(m-2) \right) ,$$
	which is negative for large $a s(a)$.
	
	The other two families come from applying Derdzi\'nski's Theorem \ref{Thmderd2} to the metrics from Proposition \ref{cor:m>2}. The Hirzebruch surface $\bbH_m$ is endowed with a K\"ahler form whose moment polytope is 
		$$ \{(x_1,x_2): x_1,x_2\geq 0, a\leq x_1+x_2\leq b\}.$$
	Then, {for large enough $x=b/a$,} there are two $U(2)$-invariant, Bach-flat, extremal K\"ahler metrics $g_{x,\pm}$ on $\mathbb{H}_m$ with a cone angle $p_{\pm}(x,m)$ along $S_b$. For $x$ large, the scalar curvature of these metrics vanishes along a hypersurface $\lbrace x_1+x_2=c \rbrace$ inside the polytope as we proved in Proposition \ref{cor:m>2} since we are in the regime $p_{\pm}(x,m)<r_1.$ Furthermore, we can write the vanishing locus as $\lbrace x_1+x_2=c(x) \rbrace$ for some $c(x) \in (a,b)$. Hence, $\scal(g_{x,\pm})$ does not vanish on 
		$$ \{(x_1,x_2): x_1,x_2\geq 0, a\leq x_1+x_2 < c{(x)}\}.$$
	and Derdzi\'nski's Theorem \ref{Thmderd2} again implies that ${\scal(g_{x,\pm})^{-2}g_{x,\pm}}$ is Einstein on the pre-image via $T_m$ composed with the moment map of such a domain.
	
	As for the scalar curvature of these metrics, it is given by the quantity $12 q_4^2 q_1 + 8 q_3^3 + 48 q_3 q_4$ which is negative for $x \gg 1$.
\end{proof}

\begin{remark}
	It is perhaps interesting to note that in addition to the metrics from part (b) of Proposition \ref{prop:noncompact Einstein metrics} there are two extra Poincar\'e--Einstein which come from applying Derdzi\'nski's theorem to the other side of the hypersurface cut by $\lbrace \scal(g_{x,\pm}) =0 \rbrace$, i.e. to the domain where 
	$$\{(x_1,x_2): x_1,x_2\geq 0, c(x)<  x_1+x_2\leq b\}.$$ 
	These metrics share the same conformal infinity and so both can be regarded as filings of $S^3/\bbZ_m$ 
\end{remark}

Poincarr\'e-Einstein fillings of $S^3/\bbZ_m$ were constructed by Page and Pope in \cite{pp} in relation to the AdS/CFT duality. The setting in \cite{pp} is different from our and in particular the authors do not use action-angle coordinates. We claim that our metrics must in fact be isometric to theirs.
\begin{prop}
The metrics in item (b) of Proposition \ref{prop:noncompact Einstein metrics} are isometric to the Page-Pope metrics in \cite{pp}.
\end{prop}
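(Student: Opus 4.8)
The plan is to prove the isometry by bringing both families into a common cohomogeneity-one normal form adapted to the $U(2)$-action and matching them coefficient by coefficient, rather than appealing to any uniqueness-of-filling statement (which, as emphasised above, genuinely fails for these conformal infinities). Both the Page--Pope metrics of \cite{pp} and our metrics $\scal(g_{x,\pm})^{-2}g_{x,\pm}$ from part (b) of Proposition \ref{prop:noncompact Einstein metrics} are $U(2)$-invariant Einstein metrics on the same disk bundle inside $\mO(-m)$, with the same sign of Einstein constant; so it suffices to exhibit a change of radial coordinate carrying one explicit expression into the other.

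First I would use the diagonalisation of Appendix \ref{diagonalizing_metric} to write our Bach-flat K\"ahler metric in the Berger-sphere form
$$g_{x,\pm} = \frac{r\,dr^2}{\pol(r)} + \frac{\pol(r)}{r}\,\eta^2 + r\,g_{\cp^1},$$
where $\eta$ is the connection one-form on the circle bundle $S^3/\bbZ_m \to \cp^1$, $g_{\cp^1}$ is a fixed multiple of the round metric on the base, and $\pol(r)=r^2-q(r)$ is the quartic of Proposition \ref{prop:metrics with cone-angles}; the radial part is exactly the one computed along the geodesic ray in the proof of Theorem \ref{prop:Globalizing}. Multiplying by the conformal factor $\scal(g_{x,\pm})^{-2}=(2q_3+q_4 r)^{-2}$ then gives the Einstein metric $\hat g$ in closed form as a function of $r$ alone.

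Next I would introduce a new radial variable $\rho$ by setting the coefficient of $g_{\cp^1}$ in $\hat g$, namely $r(2q_3+q_4 r)^{-2}$, equal to the base coefficient $\rho^2-N^2$ of the Page--Pope ansatz, and rewrite $\hat g$ as
$$\hat g = \frac{d\rho^2}{V(\rho)} + V(\rho)\,\eta^2 + (\rho^2-N^2)\,g_{\cp^1}.$$
A direct computation then expresses $V$ as a rational function of $\rho$ whose coefficients are built from $q_1,q_3,q_4$, the NUT parameter $N$, and the Einstein constant $S=12q_4^2 q_1 + 8q_3^3 + 48q_3 q_4$ of Proposition \ref{prop:Conformal Einstein}. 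It remains to verify that this rational function coincides with the one written down by Page and Pope once their mass, NUT and cosmological parameters are expressed in terms of $(x,p_\pm(x,m))$: since both functions $V$, after clearing the common denominator $\rho^2-N^2$, are quartics in $\rho$ pinned down by the same boundary data --- smooth closing up at the bolt $S_a$ and the prescribed conformal infinity on the Lens space $\partial\bar N$ --- it is enough to match finitely many coefficients.

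The \textbf{main obstacle} is precisely this coordinate reconciliation: Page and Pope work in an explicit cohomogeneity-one frame rather than in action-angle variables, so pinning down $\rho$ through the conformally weighted base coefficient and then checking the algebraic identity of the two quartics (after solving for $N$ in terms of $x$ and $p$) is where the real work lies. As a cleaner parallel route that avoids the explicit matching, I would invoke ODE uniqueness: the reduced Einstein equation for a $U(2)$-invariant metric of this type is a second-order ODE for $V(\rho)$, and both $\hat g$ and the Page--Pope metric are solutions carrying identical data at the bolt and at infinity; uniqueness for this ODE then forces them to agree on the overlap of their domains, hence globally by analytic continuation, exactly as in the Taub-bolt identification via \cite{BG} discussed above.
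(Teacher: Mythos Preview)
Your approach is valid in principle but takes a genuinely different route from the paper's own argument. You work directly at the Einstein level, proposing to bring both metrics into a common Berger-sphere normal form and match coefficients (or, equivalently, invoke ODE uniqueness for the reduced Einstein system). The paper instead works at the \emph{K\"ahler} level: it observes that the Page--Pope metrics, being Hermitian Einstein in dimension four, must be conformal to a Bach-flat K\"ahler metric with nonvanishing scalar curvature; any $U(2)$-invariant K\"ahler metric on the disk bundle arises from the Calabi ansatz, hence its symplectic potential has the form $h''(r)=-\tfrac{1}{r}+\tfrac{r}{\pol(r)}$ for a quartic $\pol$ satisfying the boundary conditions \eqref{eq:conditions for closing smoothly in terms of p2}; and then the uniqueness part of Theorem \ref{thm:Local existence and uniqueness} forces this K\"ahler metric to coincide with one of the $g_{x,\pm}$. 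Derdzi\'nski's theorem then gives the Einstein identification for free.

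What the paper's route buys is that the ``main obstacle'' you identify --- the explicit reconciliation of Page--Pope's radial coordinate with ours --- simply disappears: no coefficient matching is needed because the classification of $U(2)$-invariant Bach-flat K\"ahler metrics is already done. Your route, by contrast, is self-contained and does not require knowing that the Page--Pope metrics are conformally K\"ahler, but the computation you defer is genuinely the crux. One caution on your ODE-uniqueness shortcut: asserting that both metrics carry ``identical data at the bolt and at infinity'' presupposes you have already matched the parameter families, which is precisely what you were trying to avoid computing; you would still need to argue that both one-parameter families (after fixing scale) sweep out the same set of bolt data, and that the second-order system admits a unique solution with that data --- not automatic, since the paper itself produces \emph{two} branches $p_\pm$ for each $x$.
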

\begin{proof}Sketch:
Both Einstein metrics must be conformal to Bach-flat $U(2)$-invariant metrics which are not scalar-flat. From Derdzi\'nski's results it is enough to show that, up to isometry, there is a unique extremal $U(2)$-invariant metrics on $\bbH_m$ with given volume and fixed cone angle at $S_b.$  This is a consequence of the following facts: 
\begin{itemize}
\item Any $U(2)$-invariant K\"ahler metric on the total space of a disk bundle over $S^2$ must arise from the Calabi ansatz. \\
\item By using Legendre duality, one can translate the complex coordinates from the Calabi ansatz into action-angle coordinates to get a symplectic potential. \\
\item The symplectic potential is the Guillemin potential plus a function depending solely on $x+y.$ This is because $x+y$ is the action counterpart of the norm of the fibre coordinate on the complex side. \\
\item Because the symplectic potential is the symplectic potential of an extremal metric it must satisfy an ODE and therefore is of the form. \\
$$h''(r)= -\frac{1}{r} + \frac{r}{\pol(r)},$$
	        for a degree $4$ polynomial $\pol$ whose quadratic term is zero, satisfying condition (\ref{eq:conditions for closing smoothly in terms of p2}).
\end{itemize}
The claim follows from the uniqueness part of our main result \ref{thm:Local existence and uniqueness}.	        
\end{proof}

\subsection{{Beyond} Derdzi\'nski's theorem}\label{sec:Not Derdzinsky}

We can find Einstein metrics which do not arise from Derdzi\'nski's theorem by taking a re-scaled limit of our Einstein metrics in the case when $a s(a) \to 0$. For $m \neq 2$ these metrics are not K\"ahler, but are conformally K\"ahler. 

Recall that as $y = a s(a)$ tends to zero, our $U(2)$-invariant Bach-flat K\"ahler metrics arising from Theorem \ref{thm:Local existence and uniqueness} tend to a scalar-flat metric to which Derdzi\'nski's theorem does not apply. In this section we will denote our $U(2)$-invariant Bach-flat K\"ahler metrics by $g_{(a,y)}$ to make the parameter dependence explicit. We have seen in subsection \ref{ss:cscK} that for $y = 0$ the metrics $g_{(a,0)}$ are scalar-flat. Hence, the limit of the metrics $\scal (g_{(a,y)})^{-2} g_{(a,y)}$ as $y \to 0$ is not well defined. To overcome this we consider the re-scaled conformal metrics
$$\gein_{(a,y)}:= (ay^{-1} \scal (g_{(a,y)}))^{-2} g_{(a,y)} ,$$
which, for $y \neq 0$ are again Einstein by Derdzi\'nski's theorem. This sequence of metrics converges as $y \to 0$. Indeed, using the formula in Lemma \ref{lem:Vanishinh of scalar curvature} we find that
$$ay^{-1} \scal (g_{(a,y)}) =  \frac{1}{4m} \left(  y +8 (m-1) - (y +4 (m-2)) \frac{r}{a}    \right).$$
Taking the limit as $y \to 0$ we obtain the conformal factor $\frac{1}{4m} \left(  8 (m-1) - 4 (m-2) \frac{r}{a}    \right)$ which does not vanish identically. Hence, where defined, the metric 
$$\gein_{(a,0)}:= \left(  \frac{1}{4m} \left(  8 (m-1) - 4 (m-2) \frac{r}{a}    \right)  \right)^{-2} g_{(a,0)}$$
is Einstein because it is the smooth limit of the Einstein metrics $\gein_{(a,y)}$ as $y \to 0$. 

Next, we focus on understanding the domain of definition of the metrics $\gein_{(a,0)}$ for the different values of $m$. 

\begin{itemize}
\item We start with $m=1.$ In this case the conformal factor is $\left(\frac{r}{a}\right)^{-2}$ which never vanishes in the domain of definition of $g_{(a,0)}$ which consists of $\mO(-m)$. We have $\gein_{(a,0)}= \varphi^{2}g_{(a,0)}$ where $\varphi=\frac{a}{r}$. Therefore the scalar curvature $S$ of $\gein_{(a,0)}$ is given by the formula
\begin{equation}
	S = \varphi^{-3} \left( 6 \Delta \varphi + \scal(g_{(a,0)}) \varphi \right) = 6  \varphi^{-3}  \Delta \varphi ,
\end{equation}
where $\Delta$ denotes the Hodge Laplacian of $g_{(a,0)}$ and we used $\scal (g_{(a,0)})=0$. Using Lemma \ref{lemma:Laplacian gamma} gives 
\begin{align*}
	S & = 6  \varphi^{-3}  \Delta \varphi \\
	& = 6  \varphi^{-3} \left( -2 \varphi ' \left(2-\frac{q'}{r}\right) - 2 \varphi'' \left(r-\frac{q}{r}\right) \right) \\
	& = \frac{6 r^3 }{a^3} \left( \frac{2a}{r^2} \left(2-\frac{q'}{r}\right) -  \frac{4a}{r^3} \left(r-\frac{q}{r}\right) \right) \\
	& = \frac{6 r^3 }{a^3} \left( - \frac{2a q'}{r^3}  +  \frac{4a q}{r^4} \right) \\
	& = \frac{12}{a^2} \left( -  q'  +  \frac{2 q}{r	} \right),\\
	& = \frac{12}{a},
\end{align*}
where the last equality comes from the fact that in this case we have $q(r)=ar$. In particular $S$ is positive.  By Myers theorem $\gein_{(a,0)}$ must either compactify smoothly or have an incomplete end as $r \to +\infty$. In fact, the second occurs as we can see by using a strategy similar to that in the proof Proposition \ref{prop: Einstein cone angle}. Namely, we start by showing that the metric has infinite volume and so cannot be compact, then we compute the length of a curve running towards $r \to +\infty$ and find that this is finite, thus showing that infinity is at finite distance.

\item In the case $m=2,$  $\gein_{(a,0)}=g_{(a,0)}$.

\item Finally, we consider the case $m \geq 3$. In this situation the conformal factor diverges at 
$$\frac{r}{a}=\frac{2(m-1)}{m-2}>1,$$
so that metric $\gein_{(a,0)}$ has a Poincar\'e type end as $\frac{r}{a} \to \left(\frac{2(m-1)}{m-2}\right)^-$. Set 
$$\varphi(r)=\frac{4m}{ 8 (m-1) - 4 (m-2) \frac{r}{a}},$$
so that $\gein_{(a,0)}=\varphi^2 g_{(a,0)}.$ The scalar curvature $S$ of the Einstein metric $\gein_{(a,0)}$ is given by
\begin{align*}
	S & = 6  \varphi^{-3} \left( -2 \varphi ' \left(2-\frac{q'}{r}\right) - 2 \varphi'' \left(r-\frac{q}{r}\right) \right) .
\end{align*}
Inserting $q(r)=(m-1)a^2- (m-2) ar$ we find that
$$S=-\frac{12}{a}(m-2),$$
which is negative. In particular, this bullet proves the result stated in Theorem \ref{thm:Not Derdzinsky}.
\end{itemize}
It is interesting to note that these Einstein metrics are conformally K\"ahler. However, the K\"ahler metric to which they are conformal is scalar flat and so they cannot be obtained from directly applying Derdzi\'nski's theorem.

\appendix

\section{Convexity on $\mathbb{H}_m$}\label{appendix:positive_p}\label{appendix:convex}

\begin{prop}
	Let $m \in \N$, $p >0$, $b>a>0$ and 
	$$h''(r)=-\frac{1}{r}+\frac{r}{\pol(r)}$$
	for $\pol(r)=r^2-q(r)$ and $q(r)=\sum_{i=1}^4 \frac{q_i}{i!} r^i$ with $q_2=0$ and the remaining $q_0$, $q_1$, $q_3$ and $q_4$ satisfying the conditions \ref{eq:conditions for closing smoothly}. Then, the symplectic potential
	$$
		u=\frac{1}{2}\left(x_1\log(x_1)+x_2\log(x_2)+h(x_1+x_2)\right).
	$$
	is convex i.e. $\Hess(u)$ is positive definite on $P_H.$
\end{prop}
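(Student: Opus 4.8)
The plan is to reduce the statement, via Lemma \ref{lem:Convexity}, to the positivity of $\pol$ on the open interval, and then to a sign analysis of a single quadratic factor. By Lemma \ref{lem:Convexity}, $\Hess(u)$ is positive definite on the interior of $P_H$ if and only if $\pol(r)=r^2-q(r)$ is positive for $r\in(a,b)$. Passing to the scale-invariant variable $t=r/a$ and $\tilde{\pol}(t)=\pol(at)/a^2$ as in Remark \ref{rem:Different parametrizations}, the goal becomes $\tilde{\pol}(t)>0$ for all $t\in(1,x)$, where $x=b/a$. The four smoothness conditions \eqref{eq:conditions for closing smoothly}, rewritten through \eqref{eq:conditions for closing smoothly in terms of p2}, say precisely that $\pol(a)=\pol(b)=0$, $\pol'(a)=am$ and $\pol'(b)=-bm/p$; after rescaling these read
\begin{equation*}
\tilde{\pol}(1)=\tilde{\pol}(x)=0,\qquad \tilde{\pol}'(1)=m>0,\qquad \tilde{\pol}'(x)=-\frac{xm}{p}<0.
\end{equation*}

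Since $t=1$ and $t=x$ are roots of the quartic $\tilde{\pol}$, I would factor out $(t-1)(x-t)$ and set $w(t):=\tilde{\pol}(t)/[(t-1)(x-t)]$, a polynomial of degree at most two. On $(1,x)$ one has $(t-1)(x-t)>0$, so the whole problem collapses to showing $w(t)>0$ for $t\in(1,x)$. Taking the appropriate limits, the endpoint data above give
\begin{equation*}
w(1)=\frac{m}{x-1}>0,\qquad w(x)=\frac{xm}{p(x-1)}>0,
\end{equation*}
so $w$ is already positive at both ends. Its leading coefficient is $\alpha=\tfrac{1}{12}a^2q_4$, whose sign, by the explicit formula for $q_4$ in \eqref{eq:q's for cone angle}, equals that of $N:=(m-p)x^2+2\big((1-m)p+m\big)x-(m+1)p$ (the positive denominator being discarded).

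I would then split according to the sign of $\alpha$. If $\alpha\le 0$ (equivalently $N\le 0$, which includes the degenerate case $q_4=0$ where $w$ is affine), then $w$ is concave, so its minimum over $[1,x]$ is attained at an endpoint; since $w(1),w(x)>0$, it follows immediately that $w>0$ on $(1,x)$. The genuinely hard case is $\alpha>0$, where $w$ is a convex parabola positive at both endpoints but potentially dipping below zero in between. Here I would write $w(t)=\alpha(t-1)(t-x)+L(t)$, with $L$ the affine interpolant satisfying $L(1)=w(1)$ and $L(x)=w(x)$ (both positive, so $L>0$ on $[1,x]$), reducing the claim to the comparison $L(t)>\alpha(t-1)(x-t)$ between the line $L$ and the downward parabola $\alpha(t-1)(x-t)$. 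Equivalently, since $w$ is positive at the endpoints, it suffices to prove that $w$ has no root in $(1,x)$; this can only fail if the vertex $-\beta/(2\alpha)$ lies in $(1,x)$ and the discriminant of $w$ is positive.

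The main obstacle is exactly this last verification. The plan is to substitute the explicit expressions for $q_0,q_1,q_3,q_4$ from \eqref{eq:q's for cone angle} to obtain $w$ as a quadratic with coefficients rational in $m$, $x$, $p$, and then to check that its discriminant is negative whenever its vertex falls inside $(1,x)$ --- a finite but lengthy elimination in the three parameters, valid for all $m\in\N$, $x>1$ and $p>0$. This is the cumbersome computation referred to in the main text; the structural reductions above isolate it as the sole remaining point.
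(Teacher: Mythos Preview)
Your reduction is exactly the paper's: invoke Lemma~\ref{lem:Convexity}, factor $\pol(r)=\tfrac{q_4}{24}(r-a)(b-r)(r^2+\alpha r+\beta)$, observe from the boundary conditions that the quadratic cofactor is positive at both endpoints, and conclude that the only danger is a convex parabola whose vertex lies in $(a,b)$ with negative minimum. (A small slip: the leading coefficient of your $w$ is $a^2q_4/24$, not $a^2q_4/12$.) So structurally there is nothing to separate your outline from the paper's proof.

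The gap is that you stop precisely where the work begins. The ``lengthy elimination'' you defer is the entire content of the proposition, and it is not a routine symbolic check: the conditions ``vertex in $(1,x)$'' and ``discriminant positive'' do \emph{not} each fail individually for all $(m,x,p)$, so one really must show their conjunction is empty. The paper does this by viewing both conditions as rational inequalities in $p$ (with $m,x$ fixed), identifying the zeros $r_1,r_2,d$ of the vertex conditions and the zeros $\rho_1,\rho_2$ of the discriminant (regarded as a quadratic in $p$), and then establishing the key ordering $\rho_2<d$ via an algebraic identity whose right-hand side factors as $(x^3+3x^2-3x-1)^2(x+m-1)^2$. With this ordering in hand, three sign tables (according to the sign and relative position of $r_1,r_2$) show the three conditions are never simultaneously positive. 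Your proposed ``elimination in three parameters'' would have to reproduce this organization or an equivalent; as written, the proposal does not yet contain a proof.
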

\begin{proof}
	It follows from Lemma \ref{lem:Convexity} that it is enough to prove that $\pol(r)$ is positive for $r \in ]a,b[$.
	
	Recall that the smoothness conditions \ref{eq:conditions for closing smoothly} amount to
	\begin{IEEEeqnarray}{lCr} \nonumber
		\pol(a)=\pol(b)=0 \\ \nonumber
		\pol'(a)=am, \quad \pol'(b)=\frac{-bm}{p}, \nonumber
	\end{IEEEeqnarray}
	from what follows that $\pol(r)=\frac{q_4}{24}(r-a)(b-r)(r^2+\alpha r+\beta)$ with
	$$
	\begin{cases}
		a^2+\alpha a+\beta=\frac{am}{\frac{q_4}{24}(b-a)}\\
		b^2+\alpha b+\beta=\frac{bm}{\frac{q_4}{24} p(b-a)}
	\end{cases}
	$$
	so that
	\begin{IEEEeqnarray}{lCr} \label{alphabeta}
		\alpha=\frac{(b/p-a)m}{\frac{q_4}{24}(b-a)^2}-(a+b)\\ \nonumber
		\beta=ab\left(1+\frac{m(p-1)}{\frac{q_4}{24}p(b-a)^2}\right)
	\end{IEEEeqnarray}
	Replacing $\frac{q_4}{24}$ with its value obtained from \ref{eq:conditions for closing smoothly} we get
	\begin{IEEEeqnarray}{lCr} \nonumber
		\alpha=a(x-1)\frac{mx+p(x^2+mx-1)}{mx(x+2)-p(x^2+2(m-1)x+m+1)}\\ \nonumber
		\beta=a^2x \frac{-m(2x+1)+p((m-1)x^2+(2m+1)x-1)}{mx(x+2)-p(x^2+2(m-1)x+m+1)}
	\end{IEEEeqnarray}
	Note that setting $Q(r)=\frac{q_4}{24}(r^2+\alpha r+\beta)$ we must show that $$Q(r)>0, \quad \forall r\in (a,b).$$ Now
	\begin{IEEEeqnarray}{lCr} \nonumber
		p'(a)=am=(b-a)Q(a) \\ \nonumber
		p'(b)=-\frac{bm}{p}=-(b-a)Q(b) \nonumber
	\end{IEEEeqnarray}
	so that $Q$ is positive at $a$ and $b.$ The extremum of $Q$ is attained at $-\alpha/2$ and is $-\frac{q_4}{24}(\alpha^2/4-\beta).$ So $Q$ can only be negative on a subset of $(a,b)$ if
	$$
	\begin{cases}
		a<-\frac{\alpha}{2}<b\\
		-\frac{q_4}{24}(\alpha^2/4-\beta)<0
	\end{cases}
	$$
	Let us start with the first condition. Using equation (\ref{alphabeta}) it becomes
	$$
	\begin{cases}
		\frac{mx(2x^2+5x-1)-p(x^3+3(m-1)x^2+3(m+1)x-1)}{mx(x+2)-p(x^2+2(m-1)x+m+1)}>0\\
		\frac{-3mx(x+1)-p(x^3+(m-3)x^2-(5m-3)x-(2m+1)))}{mx(x+2)-p(x^2+2(m-1)x+m+1)}>0.
	\end{cases}
	$$
	For fixed $m$ and $x$ we will analyse for which values of $p$ the above inequalities hold. The relevant quantities are
	$$
	\begin{aligned}
		&N_1=-3mx(x+1)-p(x^3+(m-3)x^2-(5m-3)x-(2m+1))),\\
		&N_2=mx(2x^2+5x-1)-p(x^3+3(m-1)x^2+3(m+1)x-1),\\ 
		&\mD=mx(x+2)-p(x^2+2(m-1)x+m+1).
	\end{aligned}
	$$
	which vanish respectively when $p$ equals
	$$
	\begin{aligned}
		&r_1=\frac{-3(x+1)mx}{x^3+(m-3)x^2-(5m-3)x-(2m+1)}, \\
		&r_2=\frac{(2x^2+5x-1)mx}{x^3+3(m-1)x^2+3(m+1)x-1},\\
		& d=\frac{(x+2)mx}{x^2+2(m-1)x+m+1}.
	\end{aligned}
	$$
	The relative positions of these quantities matter. The first thing to note is that 
	$$
	r_1>0 \iff x^3+(m-3)x^2-(5m-3)-(2m+1)<0.
	$$
	Note also that we alway have $d\leq r_2$ and $d\leq r_1,$ whenever $r_1>0.$ The first inequality comes from
	\begin{IEEEeqnarray}{lCr} \nonumber
		(2x^2+5x-1)(x^2+2(m-1)x+m+1)-(x^3+3(m-1)x^2+3(m+1)-1)(x+2)\\ \nonumber
		=(x^3+3x^2-3x+1)(x+m-1)>0
	\end{IEEEeqnarray}
	for all $x>1$ and positive (integer) $m.$ As for the second inequality  $d\leq r_1,$ whenever $r_1>0,$ it follows from
	\begin{IEEEeqnarray}{lCr} \nonumber
		3(x+1)(x^2+2(m-1)x+m+1)+(x+2)(x^3+(m-3)x^2-(5m-3)-(2m+1))\\ \nonumber
		=(x^3+3x^2-3x+1)(x+m-1)>0.
	\end{IEEEeqnarray}
	There are therefore 3 cases to consider.
	\begin{itemize}
		\item The case when $r_1<0.$ We have $0<d<r_2$
		\item The case when $r_1>0$ and $r_2<r_1.$ We have $0<d<r_2<r_1$
		\item the case when $r_1>0$ and $r_1<r_2.$ We have $0<d<r_1<r_2.$
	\end{itemize}
	Let us now look at the second condition $-\frac{q_4}{24}(\alpha^2/4-\beta)<0$. A tedious calculation using (\ref{alphabeta}) shows that 
	$$
	\alpha^2-4\beta=\frac{\mA p^2+2mx\mB p+m^2x^2\mC}{\left(mx(x+2)-p(x^2+2(m-1)x+m+1)\right)^2},
	$$ 
	where
	\begin{IEEEeqnarray}{lCr} \label{mABC}
		\mA=(x^3+3(m-1)x^2+3(m+1)x-1)^2 \\ \nonumber
		\mB= 3(x+1)(x-1)^3-m(2x^4+7x^3+18x^2+7x+2)\\ \nonumber
		\mC=9(x+1)^2.
	\end{IEEEeqnarray}
	Now we think of $\alpha^2-4\beta$ a quadratic function of $p$. The discriminant of this function is $4m^2x^2(\mB^2-\mA\mC).$ 
	\begin{itemize}
		\item If this is negative then as $\mA>0,$ $\alpha^2-4\beta>0$ always.
		\item Assume it is negative and $\mB>0$ then the roots of $\alpha^2-4\beta$ in $p$ are negative so for positive $p,$ $\alpha^2-4\beta>0.$
		\item Assume now that the discriminant is negative and $\mB<0.$ For fixed $m,x$ the roots of $\alpha^2-4\beta$ are
		$$
		\rho_1=mx\frac{-\mB-\sqrt{\mB^2-\mA\mC}}{\mA}, \quad \rho_2=mx\frac{-\mB+\sqrt{\mB^2-\mA\mC}}{\mA}.
		$$
		Because $\mB<0,$ both roots $\rho_1<\rho_2$ are positive and therefore we are interested in comparing them with $d, r_1$ and $r_2$. In fact we have $\rho_2<d.$ To see this note that this is equivalent to
		$$
		\begin{aligned}
			&\frac{-\mB+\sqrt{\mB^2-\mA\mC}}{\mA}<\frac{(x+2)}{x^2+2(m-1)x+m+1} \\
			&\iff (x^2+2(m-1)x+m+1)\left(-\mB+\sqrt{\mB^2-\mA\mC}\right)<(x+2)\mA \\
			&\iff (x^2+2(m-1)x+m+1)\sqrt{\mB^2-\mA\mC}<(x+2)\mA+(x^2+2(m-1)x+m+1)\mB\\
			&\iff (x^2+2(m-1)x+m+1)^2\left(\mB^2-\mA\mC\right)<\left((x+2)\mA+(x^2+2(m-1)x+m+1)\mB\right)^2 \\
			&\iff (x+2)^2\mA^2+2(x+2)(x^2+2(m-1)x+m+1)\mB\mA+(x^2+2(m-1)x+m+1)^2\mA\mC>0 \\
			&\iff (x+2)^2\mA+2(x+2)(x^2+2(m-1)x+m+1)\mB+(x^2+2(m-1)x+m+1)^2\mC>0 \\
		\end{aligned}
		$$
		Replacing in the formulas for $\mA, \mB$ and $\mC$ from equation (\ref{mABC}) it follows that
		$$
		(x+2)^2\mA+2(x+2)(x^2+2(m-1)x+m+1)\mB+(x^2+2(m-1)x+m+1)^2\mC=(x^3+3x^2-3x-1)^2(x+m-1)^2
		$$
		which is clearly positive for $x>1$ and positive $m.$ Thus $\rho_2<d.$ Note that in this case $\alpha^2-4\beta$ is positive everywhere except between $\rho_1$ and $\rho_2.$
	\end{itemize}
	Last we note that $\mD$ is negative when $p>d$ and positive if $p<d.$ Note also that $\mD$ and $q_4$ have the same sign.
	We are now ready to sum up our findings on 3 tables.
	\begin{itemize}
		\item Consider the case when $r_1<0.$ 
		
		Let us further assume for now that $\rho_1$ and $\rho_2$ are real positive roots (i.e. $\mB^2-\mA\mC>0$ and $\mB<0$.)The signs of the relevant quantities are as follows
		\begin{center}
			\begin{tabular}{|| c c c c c c c c c c c } 
				\hline
				$p$&0& &$\rho_1$& &$\rho_2$& &$d$& &$r_2$&  \\ 
				\hline
				$\mD$& &+ & & + & &+ &0 &-& &-  \\ 
				\hline
				$N_1$& &- & & - & &- & &-& &-  \\
				\hline
				$N_2$& &+ & & + & &+ & &+&0 &- \\
				\hline
				$\alpha^2-4\beta$& &+&0&-&0&+& & +& &+\\
				\hline
				$N_1/\mD$& &- & & - & &- &$\times $&+& &+ \\
				\hline
				$N_2/\mD$& &+ & & + & &+ &$\times $&-&0 &+ \\
				\hline
				$D(\alpha^2-4\beta)$& &+&0&-&0&+&0 & -& &-\\
				\hline\hline
			\end{tabular}
		\end{center}
		\
		Because $N_1/\mD, N_2/\mD, \mD(\alpha^2-4\beta)$ are never positive for the values of $p$ we see that in this case $p(r)$ is positive for $r in (a,b)$.

		Still under the assumption that  $r_1<0$ but assuming that either $\rho_1$ and $\rho_2$ are negative or complex valued we can still use the above sign table by simply discarding what is on the left of $\rho_2$ and $\rho_2.$
		\item Let us now consider the case when $r_1>0$ and $r_1<r_2.$ 
		
		Again let us further assume that $\rho_1$ and $\rho_2$ are real positive roots (i.e. $\mB^2-\mA\mC>0$ and $\mB<0$.)The signs of the relevant quantities are as follows
		\begin{center}
			\begin{tabular}{|| c c c c c c c c c c c c c } 
				\hline
				$p$&0& &$\rho_1$& &$\rho_2$& &$d$& &$r_1$& &$r_2$&  \\ 
				\hline
				$\mD$& &+ & & + & &+ &0 &-& &-& &-  \\ 
				\hline
				$N_1$& &- & & - & &- & &-&0&+& &+  \\
				\hline
				$N_2$& &+ & & + & &+ & &+& &+&0 &- \\
				\hline
				$\alpha^2-4\beta$& &+&0&-&0&+& & +& &+& &+\\
				\hline
				$N_1/\mD$& &- & & - & &- &$\times $&+&0&-& &- \\
				\hline
				$N_2/\mD$& &+ & & + & &+ &$\times $&-& &-&0 &+ \\
				\hline
				$D(\alpha^2-4\beta)$& &+&0&-&0&+&0&-& & -& &-\\
				\hline\hline
			\end{tabular}
		\end{center}
		\
		The 3 last rows are never simultaneously positive therefor $p(r)$ is positive for $r \in (a,b).$ Again by ignoring what is to the left of $\rho_2$ we see that the same extends to when $\rho_1$ and $\rho_2$ are not real positive roots of $\alpha^2-4\beta.$
		\item The case when $r_1>0$ and $r_2<r_1$ is similar. The details are in the table below.
		\begin{center}
			\begin{tabular}{|| c c c c c c c c c c c c c } 
				\hline
				$p$&0& &$\rho_1$& &$\rho_2$& &$d$& &$r_2$& &$r_1$&  \\ 
				\hline
				$\mD$& &+ & & + & &+ &0 &-& &-& &-  \\ 
				\hline
				$N_1$& &- & & - & &- & &-& &-&0 &+  \\
				\hline
				$N_2$& &+ & & + & &+ & &+&0 &-& &- \\
				\hline
				$\alpha^2-4\beta$& &+&0&-&0&+& & +& &+& &+\\
				\hline
				$N_1/\mD$& &- & & - & &- &$\times $&+& &+& 0&- \\
				\hline
				$N_2/\mD$& &+ & & + & &+ &$\times $&-&0 &+& &+ \\
				\hline
				$D(\alpha^2-4\beta)$& &+&0&-&0&+&0&-& & -& &-\\
				\hline\hline
			\end{tabular}
		\end{center}
		\
		Again, the 3 last rows are never simultaneously positive, therefore $p(r)$ is positive for $r \in (a,b).$ By ignoring what is to the left of $\rho_2$ we see that the same extends to when $\rho_1$ and $\rho_2$ are not real positive roots of $\alpha^2-4\beta.$
	\end{itemize}
\end{proof}

\section{Metric classification}\label{appendix:mc}

We would like to understand how the metrics we have constructed behave when the parameter $y=as(a)$ varies. We have seen in Proposition (\ref{prop:Globalizing}) that there are $4$ kinds of $U(2)$-invariant Bach-flat K\"ahler metrics we can obtain from using Calabi's ansatz. Their behaviour depends on the value of $as(a)$ through the behaviour of the polynomial $\pol$. We will assume that $m\geq 3.$ 
The relevant polynomial is
\begin{align*}
		\frac{\tilde{\pol}(t)}{t-1} & =  \frac {( y+12m )  ( y +8(m-1) )}{96m}  - \frac { \left( y+12m \right)  \left( y-8 \right) }{ 96m} t \\ 
		& \ \ \ \ -{\frac {y \left( 12m+y-8 \right) }{96m}}{t}^{2}+{\frac {y \left( y+4(m-2) \right) }{96m}}{t}^{3}.
	\end{align*}
Set $\tau=t-1.$ In terms of $\tau$ the above becomes $\frac{1}{96m}$ times
$$
\pol_{m,y}(\tau)=y(y+4(m-2))\tau^3+2y(y-8)\tau^2+24m(4-y)\tau+96m^2.
$$

\begin{itemize}
\item When this polynomial has degree is $3$ and no positive roots the metric we constructed is defined on $\mO(-m)$ and it is incomplete. \\
\item When the polynomial is of degree $3$ and has a positive root $\tau_1,$ then the metric is defined on $\bbH_m$ and it has a cone angle $\frac{-ma(1+\tau_1)}{\pol'(a(1+\tau_1))}$.\\
\item When the polynomial is of degree $2$ then the metric we constructed is defined on $\mO(-m),$ its complete and has exponencial volume growth . \\
\item When the polynomial is of degree $1$ then the metric we constructed is defined on $\mO(-m),$ its complete and has quartic volume growth . 
\end{itemize}

The discriminant of this polynomial is  $-768{m}^{2}$ times
$$
D=y^3\left( 6(m-2)+y \right)\left( 12m+y \right) \left( y^3+6(3m-2)y^2+72m(m-2)y+256\right).
$$
When $D>0,$ the polynomial will have one root whereas if $D<0$ it will have three roots. Let 
$$
v(x)= y^3+6(3m-2)y^2+72m(m-2)y+256.
$$
Then $v(-12m)=256=v(-6(m-2))=v(0)$ and $v(-4(m-2))=-64m^2(m-3)<0.$ This means that
\begin{itemize}
\item The polynomial $v$ admits a zero in $]-\infty, -12m[$ say $y_1,$ \\
\item it admits another zero in $]-6(m-2),-4(m-2)[$ say $y_2,$ \\
\item and yet another zero in $]-4(m-2),0[,$ $y_3.$
\end{itemize}
The sign table for $D$ and the highest order term $y(y+4(m-2))$ of our polynomial is

\begin{center}
\begin{table}[htbp]
    \addtolength{\tabcolsep}{-3pt}
             \begin{tabular}{|| c |c c c c c c c c c c c c c c c c c } 
				\hline
				$y$&$-\infty$& &$y_1$& &\tiny{$-12m$}& &\tiny{$-6(m-2)$}& &$y_2$& &\tiny{$-4(m-2)$}& & $y_3$& & $0$& &\\
				\hline
				\tiny{$6(m-2)+y$} & & -& & -& & -&0&+ & & +& & +& & +&  & +& \\
				\hline
				\tiny{$12m+y$} & & -& & -&0 & +& & +& & +& & +& & +& &+& \\
				\hline
				$v$& & -&$0$&+ & &+ & & +&0& -& &- & 0&+ & &+& \\
				\hline
				$D$& & -&$0$&+& 0&-& 0& +&0& -& &- & 0&+ &0 &-& \\
				\hline
				\tiny{${y(y+4(m-2))}$}& & +& &+& & +& & +& & +&0 &- & &- &0 &+& \\
				
				\hline\hline
			\end{tabular}
			\end{table}
		\end{center}

Now $\pol_{m,y}$ is positive at $0$ so if its highest order term is positive and it has a single root, that root is negative.This means that for $y$ in $]-\infty,y_1[$,  $]-12m,-6(m-2)[$, $]y_2,-4(m-2)[$ or $]0,+\infty[$ our metric is defined over $\mO(-m)$. 

On the other hand if its highest order term is negative and it has a single root, then the root is positive. This means for $y$ in $]-4(m-2),y_3[$ our metric is defined on $\bbH_m.$

Let us now consider the case when $\pol_{m,y}$ has $3$ positive roots and we are outside $]-4(m-2),0[$. This concerns the intervals $]y_1,-12m[$ and $]-6(m-2),y_2[.$ The product of the 3 roots has the same sign as $-y(y+4(m-2))$ since the constant term for $\pol_{m,y}$ is positive. This means that outside $]-4(m-2),0[,$ the product of the 3 roots of $\pol_{m,y}$ is negative. So either all 3 roots are negative or 2 are positive and one is negative. Assume the latter. This would means that $\pol'_{m,y}$ has at least a positive root. But 
$$
\pol'_{m,y}(\tau)=3y(y+4(m-2))\tau^2+4y(y-8)\tau+24m(4-y).
$$
Therefore since in our regime $y(y+4(m-2))>0$ the product of the two roots of $\pol'_{m,y}$ has the same sign as $4-y$ i.e. it is positive and the two roots of $\pol'_{m,y}$ are positive. But their sum has the same sign as $-4y(y-8)$ which is negative as $y<0.$ We conclude that in $]y_1,-12m[$ and $]-6(m-2),y_2[,$ $\pol_{m,y}$ has no positive roots and our metric is defined on $\mO(-m)$ and is incomplete. 

We are now left with analysing the metric for parameter values in $]-4(m-2),0[.$ In $]-4(m-2),y_1[,$ $\pol_{m,y}$ has a single root and since $\pol_{m,y}(0)>0$ and $\pol_{m,y}$ tends to $-\infty$ at $+\infty$ then the root is positive so that the metric is defined on $\bbH_m.$ In $]y_1,0[.$ $\pol_{m,y}$ has 3 roots. At least one root is positive because $\pol_{m,y}(0)>0$ and $\pol_{m,y}$ tends to $-\infty$ at $\infty$ so again the metric is defined on $\bbH_m.$

We have seen that if $y+4(m-2)>0$ then the scalar curvature of $U(2)$-invariant Bach-flat K\"ahler metrics obtained from Proposition (\ref{prop:Globalizing}), the scalar curvature of such metrics vanishes along an $S_b$ where 
$$
b=a\left(1+\frac{4m}{y+4(m-2)}\right).
$$
In this case we get Einstein metrics from Derdzi\'nski's theorem on an open set of the whole manifold where the scalar curvature is either positive or negative. The scalar curvature of this Einstein metric is given 
$$
S=\frac{-y^2\left(y+6(m-2)\right)}{a^3}.
$$
In the regime where $y+4(m-2)<0$ the scalar curvature of the constructed $U(2)$-invariant Bach-flat K\"ahler metrics is nowhere vanishing and so we get Einstein metrics on $\mO(-m)$ whose scalar curvature takes on all constant values smaller than 
$$
\frac{32(m-2)^3}{a^3}.
$$
These Einstein metric are incomplete as one can see by slightly modifying the arguments in the proof of Proposition (\ref{prop:Globalizing}). Namely Equation \ref{incomplete}. We give the details. When we consider the geodesic ray parameterised by $\ell \in (\ell_0 , +\infty)$ and given by $\ell \mapsto \curv(\ell)$ with $x_1(\curv(\ell))=\frac{\ell}{2}=x_{{2}}(\curv(\ell))$ and both $\theta_i(\curv(\ell))$ constant for $i=1,2 $ we find that for the Einstein metric 
		\begin{align}
		\mathrm{Length}(\curv) & = \int_{\ell_0}^{+\infty}  \sqrt{\frac{\ell}{(q_3+2q_4 \ell)^2 \pol(\ell)}} d \ell \sim \int_{\ell_0}^{+\infty} \ell^{-\frac{\alpha+1}{2}} d \ell ,
	\end{align}
	which is always finite. The metric is incomplete. 
	
Note also that for the parameter choice $y=-6(m-2)$ the incomplete Einstein metric is actually Ricci-flat. 

We now focus on the values $y=-4(m-2)$ and $y=0$. The degree of $\pol_{m,y}$ at each of these is $2$ or $1$ so that it follows from Proposition (\ref{prop:Globalizing}) that the corresponding $U(2)$-invariant Bach-flat K\"ahler metrics have exponencial or quartic volume growth. We have also seen that when $y=-4(m-2)$ the metric has constant scalar curvature equal to $s(a)$ and so it is Einstein whereas for $y=0$ the metric is scalar-flat. In particular we cannot apply Derdzi\'nski's construction to it. 

We can determine the cone angles at infinity for these metrics by using the formulas 
\[
\scalebox{.8}{$p_-(x)=\frac{m}{2}\frac{m (x^4 + 2 x^3 + 6 x^2 + 2 x + 1)-2 (x + 1) (x - 1)^3-(1 + 4 x + x^2) \sqrt{-4 m (-1 + x)^3 (1 + x) + 4 (-1 + x^2)^2+m^2 (1 - 4 x - 6 x^2 - 4 x^3 + x^4)}}{(2 + m - 2 x + m x) (-2 + 3 m x + 3 m x^2 + 2 x^3)}$}
\]
\[
\scalebox{.8}{$p_+(x)=\frac{m}{2}\frac{m (x^4 + 2 x^3 + 6 x^2 + 2 x + 1)-2 (x + 1) (x - 1)^3+(1 + 4 x + x^2) \sqrt{-4 m (-1 + x)^3 (1 + x) + 4 (-1 + x^2)^2+m^2 (1 - 4 x - 6 x^2 - 4 x^3 + x^4)}}{(2 + m - 2 x + m x) (-2 + 3 m x + 3 m x^2 + 2 x^3)}$}
\]
which leads to
$$
\begin{aligned}
p_-(x)&=&\frac{m}{(m-2)x}+O\left(\frac{1}{x^2}\right),\\
p_+(x)&=&\frac{m}{2}+\frac{m(-3m^2+8m-16)}{4(m-2)x}+O\left(\frac{1}{x^2}\right).\\
\end{aligned}
$$
Now $p_+$ corresponds to $y=0$ while $p_-$ corresponds to $y=-4(m-2)$, therefore the limiting metrics have cone angles $m/2$ and $0$ at infinity respectively. We sum up findings in the following table

 \begin{center}
    \begin{table}[htbp]
    \addtolength{\tabcolsep}{-6pt}
\tiny
			\begin{tabular}{||c|ccccccccccccccccc } 
			        \hline
				$y$&$-\infty$& &$y_1$& &\tiny{$-12m$}& &\tiny{$-6(m-2)$}& &$y_2$& &\tiny{$-4(m-2)$}& & $y_3$& & $0$& &\\
			        \hline
				Space& &$\mO(-m)$ & & $\mO(-m)$& & $\mO(-m)$& & $\mO(-m)$& & $\mO(-m)$& &$\bbH_m$ & &$\bbH_m$  & &$\mO(-m)$ & \\ 
				\hline
				Type & & Inc& & Inc&  & Inc& & Inc& & Inc& cscK& & & & $sf$ &Inc& \\
				\hline
				\tiny{ $\scal=0$} & & na& & na&  & na& & na& & na& &$S^3/\bbZ_m$ & &$S^3/\bbZ_m$ & $ $ &$S^3/\bbZ_m$& \\
				\hline
				\tiny{ Einstein Type} & & Inc& & Inc&  & Inc& & Inc& & Inc& &2PE & &2PE & $na$ &2PE& \\
				\hline
				{\tiny Einstein scalar} & &- & &- &&- &Rf & +&max &+ & &+ & &+ &  &+& \\
                                 \hline
			\end{tabular}
	\end{table}
	\end{center}	
Here Inc stands for incomplete, while PE stands for Poincarr\'e-Einstein, sf for scalar-flat and Rf for Ricci-flat. We write 2PE because we get 2 Poincarr\'e-Einstein metrics from our constructions a smooth PE and a cap which is either incomplete or has a cone angle. 

\section{Derdzi\'nski's Theorem}\label{d}

For completeness, in this appendix we give a proof of Derdzi\'nski's theorem following Derdzi\'nski's original work \cite{d} as well as some more recent developments by LeBrun \cite{LeBrun_sd}. We start by recalling the statement of Derdzi\'nski's theorem.

\begin{theorem}\label{thm:Derdzinsky}
	Let $(M,g)$ be an oriented Einstein manifold with $\det W^+(g)>0$ where $W^+(g)$ as an endomorphism of $\Lambda^2_+$. Then, the metric 
	$$h= (24)^{1/3} |W^+(g)|^{2/3}g$$ 
	is K\"ahler and extremal. Furthermore, if the scalar curvature $\scal(h)$ of $h$ is nowhere vanishing, $g=(\scal(h))^{-2}h$. In particular, $\scal(h) = (24)^{1/3} |W^+|^{2/3}$.
	
	Conversely, suppose that $(M,h)$ is an extremal K\"ahler manifold whose scalar curvature $\scal(h)$ satisfies the equation
	$$\scal(h)^{3} \left( 6 \Delta_h \scal(h)^{-1}  + 1 \right) = \text{constant}.$$
	Then $g=(\scal(h))^{-2}h$ is Einstein where defined, with scalar curvature $\scal(g)=\scal(h)^{3} \left( 6 \Delta_h \scal(h)^{-1}  + 1 \right) $.
\end{theorem}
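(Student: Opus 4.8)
The plan is to prove the two implications by different routes, working throughout on an oriented Riemannian $4$-manifold with the decomposition $\Lambda^2=\Lambda^2_+\oplus\Lambda^2_-$ and regarding $W^+$ as a trace-free symmetric endomorphism of the rank-$3$ bundle $\Lambda^2_+$.

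\emph{Forward direction.} I would begin with the pointwise algebra of $W^+(g)$. Its three eigenvalues sum to zero, so $\det W^+>0$ forces exactly one of them to be positive, and a short argument shows this eigenvalue must be simple. Its eigenline singles out, after normalisation, a self-dual $2$-form $\omega$ of constant length, hence an almost complex structure $J$ compatible with $g$ via $\omega=g(J\cdot,\cdot)$. The second ingredient is the second Bianchi identity: an Einstein metric has constant scalar curvature and vanishing Cotton tensor, so its full Weyl tensor is divergence-free and therefore $\delta W^+=0$. The heart of the argument is then to differentiate $\omega$ together with the top eigenvalue of $W^+$ and to verify that the conformal rescaling $h=(24)^{1/3}|W^+|^{2/3}g$ is exactly the one for which the transported form becomes parallel, so that $\nabla^h\omega=0$ and $h$ is K\"ahler with respect to $J$. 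Extremality of $h$ follows, again from $\delta W^+=0$, by showing that $\mathrm{grad}_h\scal(h)$ is a (real-)holomorphic vector field. Finally, when $\scal(h)$ is nowhere zero one inverts the conformal relation to recover $g=(\scal(h))^{-2}h$; the normalising constant $(24)^{1/3}$ is chosen precisely so that $\scal(h)$ coincides with the conformal factor, giving $\scal(h)=(24)^{1/3}|W^+|^{2/3}$.

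\emph{Converse direction.} Here I would argue by direct conformal computation. Set $s=\scal(h)$ and $g=s^{-2}h=\varphi^2h$ with $\varphi=s^{-1}$. The trace part is immediate from the conformal transformation law for the scalar curvature in dimension four, $\scal(g)=\varphi^{-3}\bigl(\scal(h)\,\varphi+6\Delta_h\varphi\bigr)$, which upon substituting $\varphi=s^{-1}$ yields exactly $\scal(g)=s^3\bigl(6\Delta_h s^{-1}+1\bigr)$; since a connected Einstein $4$-manifold has constant scalar curvature, the hypothesis that this quantity be constant is the correct necessary condition, and it simultaneously reads off the claimed value of $\scal(g)$. It then remains to show the trace-free Ricci tensor of $g$ vanishes. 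Feeding $\varphi=s^{-1}$ into the conformal transformation law for the trace-free Ricci tensor, the Einstein condition reduces to
\begin{equation*}
s\,\mathring{\mathrm{Ric}}_h+2\,(\nabla^2_h s)_0=0,
\end{equation*}
where $(\cdot)_0$ denotes the trace-free part. At this point the K\"ahler and extremal hypotheses enter: extremality makes $\nabla^2_h s$ invariant under $J$, so both terms are primitive $(1,1)$-tensors, and the special curvature structure of a K\"ahler surface --- in particular that $W^+$ is algebraically determined by $s$ --- is used to identify the left-hand side with a multiple of the Bach tensor of $h$. A direct computation of this Bach tensor in terms of $s$ then shows that it vanishes exactly when $s^3(6\Delta_h s^{-1}+1)$ is constant, so the imposed scalar condition forces the displayed tensor to vanish and $g$ is Einstein.

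\emph{Main obstacle.} In both directions the crux is the same piece of K\"ahler-surface curvature analysis, resting on the algebraic fact that, in an $\omega$-adapted frame, $W^+$ of a K\"ahler surface has eigenvalues proportional to $\scal(h)$. In the forward direction it is the computation pinning down the conformal weight $|W^+|^{2/3}$ as the one turning $\delta W^+=0$ into $\nabla^h\omega=0$; in the converse it is the reduction of the displayed trace-free equation to the single scalar condition, i.e.\ its identification with the Bach tensor of $h$. I would therefore isolate and prove that pointwise eigenvalue identity first and use it as the common engine, after which the remaining ingredients --- the eigenvalue algebra of $W^+$, the Bianchi identity $\delta W^+=0$, and the two conformal transformation laws --- are routine.
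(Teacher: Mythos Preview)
Your proposal is correct and follows essentially the same route as the paper: the forward direction is LeBrun's Weitzenb\"ock argument (harmonicity of $W^+$ for Einstein metrics plus the eigenvalue algebra forces the conformal weight $|W^+|^{2/3}$ to make $\omega$ parallel), and the converse proceeds via the conformal change formulas to reduce the Einstein condition to $s\,(\mathrm{Ric}_h)_0+2(\Hess s)_0=0$, which is then identified with the Bach tensor of the K\"ahler surface. The one point the paper makes explicit that you leave implicit is how the Bach-flat condition is shown equivalent to the scalar equation: rather than a generic ``direct computation'', the paper evaluates $B(\nabla s)$, finds it equals $\tfrac{1}{12}\nabla(6s\Delta s+12|\nabla s|^2-s^3)$, and then uses that $B$ is trace-free and $J$-invariant (hence has two opposite double eigenvalues) to conclude that $B(\nabla s)=0$ forces $B=0$.
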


We shall start by proving a series of lemma's.

\begin{lemma}[LeBrun in \cite{LeBrun_sd}]
	Let $(M,g)$ be a simply connected $4$-manifold whose positive Weyl tensor $W^+$ is harmonic and satisfies $\det W^+ >0$. Then, the metric $h=|W^+|^{2/3}g$ is K\"ahler, and has positive scalar curvature equal to
	$$\scal(h) = 6 \left(\frac{2}{3}\right)^{1/6} |W^+|^{1/3} , $$
	and $g=(\frac{2}{3})^{1/3} 6^2 (\scal(h))^{-2}h$.
\end{lemma}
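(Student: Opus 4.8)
The plan is to follow Derdzi\'nski's argument, organised in three stages: a pointwise linear-algebra reduction that produces a global almost-complex structure, the analytic heart that makes a conformal rescaling of the associated $2$-form parallel, and a bookkeeping computation fixing the constants.

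First I would exploit the hypothesis $\det W^+>0$ pointwise. Viewing $W^+$ as a trace-free self-adjoint endomorphism of the rank-three bundle $\Lambda^2_+$, its eigenvalues $\lambda_1\ge\lambda_2\ge\lambda_3$ sum to zero, and $\det W^+=\lambda_1\lambda_2\lambda_3>0$ forces exactly two of them to be negative; hence $\lambda_1>0>\lambda_2\ge\lambda_3$ and in particular the top eigenvalue $\lambda_1$ is simple and nowhere zero. Its eigenline $L\subset\Lambda^2_+$ is then a smooth real line subbundle, and since $M$ is simply connected we have $H^1(M;\mathbb{Z}/2)=0$, so $L$ is trivial and admits a global smooth nonvanishing section. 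Normalising it to the length for which the induced endomorphism $J$ defined by $\omega=g(J\cdot,\cdot)$ satisfies $J^2=-\mathrm{id}$ yields a global self-dual $2$-form $\omega$ and an almost-complex structure $J$ compatible with $g$ and inducing the given orientation.

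The main step, and the principal obstacle, is to show that $(g,J)$ is conformally K\"ahler with conformal factor $\phi=|W^+|^{1/3}$ (equivalently a constant multiple of $\lambda_1^{1/3}$). This is where the harmonicity $\delta W^+=0$ enters. I would differentiate the eigenvalue equation $W^+\omega=\lambda_1\omega$, decompose $\nabla\omega$ along the splitting $\Lambda^2_+=L\oplus L^{\perp}$, and combine the harmonicity condition with the second Bianchi identity to express $d\lambda_1$ and the $L^{\perp}$-component of $\nabla\omega$ in terms of one another. The precise exponent $1/3$ is exactly the one for which these contributions cancel, so that $\omega_h:=\phi^2\omega$ is parallel for the Levi-Civita connection of $h=\phi^2 g$; hence $\nabla^h J=0$ and $h$ is K\"ahler. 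This identity is the technical core of Derdzi\'nski's theorem (see \cite{d}, \cite{LeBrun_sd}), and I expect it to be the hardest part to carry out cleanly.

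Finally, with $h$ K\"ahler I would read off the constants from the normal form of the self-dual Weyl curvature of a K\"ahler surface, namely that $W^+_h$ has eigenvalues $\bigl(\tfrac{\scal(h)}{6},-\tfrac{\scal(h)}{12},-\tfrac{\scal(h)}{12}\bigr)$, with eigenform the K\"ahler form. Since the sign of $\det W^+$ is conformally invariant, $\det W^+>0$ gives $\det W^+_h>0$, hence $\scal(h)>0$. This normal form makes $|W^+_h|$ a fixed multiple of $\scal(h)$, while the conformal scaling $W^+_h=\phi^{-2}W^+$ of the Weyl endomorphism gives $|W^+_h|=\phi^{-2}|W^+|=\phi^{-2}\cdot\phi^{3}=\phi$. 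Combining the two relations, and keeping track of the norm convention of \cite{d}, produces $\scal(h)=6\bigl(\tfrac23\bigr)^{1/6}|W^+|^{1/3}$. The last identity is then pure algebra: since $\phi=\scal(h)/\bigl(6(\tfrac23)^{1/6}\bigr)$ and $g=\phi^{-2}h$, we obtain $g=6^{2}\bigl(\tfrac23\bigr)^{1/3}(\scal(h))^{-2}h$, as claimed.
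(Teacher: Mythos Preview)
Your proposal is correct and follows essentially the same route as the paper's proof: the pointwise spectral analysis of $W^+$ to extract a global eigenform via simple connectedness, the appeal to the Derdzi\'nski/LeBrun argument (the paper invokes the Weitzenb\"ock formulas of \cite{LeBrun_sd} rather than your eigenvalue-differentiation description, but these are the same mechanism) to make the rescaled form parallel, and the final bookkeeping via the K\"ahler normal form of $W^+_h$. The only cosmetic difference is that the paper uses the top eigenvalue $\alpha$ as the conformal factor and converts to $|W^+|$ at the end, whereas you work directly with $\phi=|W^+|^{1/3}$; since $|W^+|$ and $\alpha$ differ by a fixed constant this is immaterial.
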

\begin{proof}
	Given that $W^+$ is trace-free, when viewed as an endomorphism of $\Lambda^2_+$, we find from the hypothesis $\det W^+ >0$, that it must have one positive and two negative eigenvalues. Furthermore, as $M$ is simply connected, the positive eigenspace of $W^+$ sweeps out a trivial real line sub-bundle of $\Lambda^2_+$. 
	
	Now, let $\alpha:M \to \mathbb{R}^+$ denote the positive eigenvalue of $W^+$ and $\omega \in \Omega^2_+$ an associated eigenform with $|\omega|_g^2=2$. A computation carried out in Theorem 2.1 of \cite{LeBrun_sd} using the Weitzenb\"ock formulas for the self-dual $2$-form $\omega$ and the self-dual Weyl tensor $W^+$ shows that $\omega$ is parallel with respect to the Levi-Civita connection associated with $\alpha^{2/3}g$. As a consequence, $h=\alpha^{2/3}g$ is a K\"ahler metric which, by conformal invariance of $W$, has the same self-dual Weyl tensor as $g$. However, in order to view the self-dual Weyl tensor as an endomorphism of $\Lambda^2_+$ requires raising an index and and as such $W^+_g$ and $W^+_h$ differ in general. Hence $W_g^+=\alpha^{2/3} W^+_h$. Moreover, for any K\"ahler metric such as $h$, we have
	$$W_h^+= \mathrm{diag} \left( \frac{\scal(h)}{6} , -\frac{\scal(h)}{12} , \frac{\scal(h)}{12} \right),$$
	and so, equating the first eigenvalue of $W_g^+$ and that of $\alpha^{2/3} W^+_h$ we find that
	$$\alpha = \alpha^{2/3} \frac{\scal(h)}{6}, $$
	from which we conclude that $\alpha^{1/3}=\frac{\scal(h)}{6}$ and $W^+_g=\mathrm{diag}  ( \alpha , -\alpha/2 , - \alpha/2 )$ and so $|W^+_g|^{2} = \frac{3 \alpha^2}{2}$, which results in
	$$|W^+_g|^{2/3} = \left(\frac{3}{2}\right)^{1/3}  \alpha^{2/3} = \left(\frac{3}{2}\right)^{1/3} \left( \frac{\scal(h)}{6} \right)^2.$$
\end{proof}

\begin{lemma}[Derdzi\'nski]\label{lem:Derdzinsky g Einstein to h}
	Let $h$ be a K\"ahler metric with positive scalar curvature $\scal(h)$, such that the metric $g=(\scal(h))^{-2}h$ is Einstein. Then, $h$ is an extremal K\"ahler metric with vanishing Bach tensor.
\end{lemma}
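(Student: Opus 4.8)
The plan is to extract both conclusions from the conformal transformation law of the trace-free Ricci tensor in dimension $4$, combined with the fact that on a K\"ahler surface the Ricci tensor is $J$-invariant. Write $s=\scal(h)>0$ and $g=s^{-2}h$. First I would record that for a conformal change $g=\phi^{-2}h$ the trace-free Ricci tensors (the trace-free part being unambiguous since it is the same for the proportional metrics $g$ and $h$) satisfy, in dimension $4$,
\begin{equation*}
\mathring{\mathrm{Ric}}(g)=\mathring{\mathrm{Ric}}(h)+2\,\phi^{-1}\,\mathring{\Hess}_h(\phi),
\end{equation*}
where $\mathring{(\cdot)}$ is the trace-free part and $\Hess_h$ the Hessian with respect to $h$; the gradient-squared terms cancel exactly because $\phi$ enters through $\phi^{-1}$ (with $u=-\log\phi$ one has $\nabla du-du\otimes du=-\phi^{-1}\Hess_h(\phi)$). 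Setting $\phi=s$ and imposing that $g$ is Einstein, $\mathring{\mathrm{Ric}}(g)=0$, produces the single identity
\begin{equation*}
\mathring{\mathrm{Ric}}(h)=-2\,s^{-1}\,\mathring{\Hess}_h(s).
\end{equation*}

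For extremality I would decompose this identity into its $J$-invariant and $J$-anti-invariant pieces. On a K\"ahler manifold the Ricci form is of type $(1,1)$, so the Ricci tensor, and hence $\mathring{\mathrm{Ric}}(h)$, is $J$-invariant; since $h$ is $J$-invariant the trace term is too, so the anti-invariant part of $\mathring{\Hess}_h(s)$ is just $\Hess_h(s)^{-}$. Taking the $J$-anti-invariant part of the displayed identity annihilates the left-hand side and leaves
\begin{equation*}
0=-2\,s^{-1}\,\Hess_h(s)^{-},
\end{equation*}
so $\Hess_h(s)^{-}=0$, i.e. the $(2,0)+(0,2)$ part of the Hessian of $s$ vanishes. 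This is precisely Calabi's characterisation that $\operatorname{grad}^{1,0}s$ be a holomorphic vector field, which is what it means for $h=g_{u_h}$ to be extremal.

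For the vanishing of the Bach tensor I would invoke two standard facts: every Einstein $4$-manifold is Bach-flat, so $B(g)=0$; and in dimension $4$ the Bach tensor is conformally invariant (it rescales by a positive conformal factor), so $B=0$ is a conformally invariant condition. Since $h=s^{2}g$ is conformal to $g$, this immediately gives $B(h)=0$. The routine but delicate point will be nailing down the precise constants and signs in the dimension-$4$ conformal variation of $\mathring{\mathrm{Ric}}$; the conceptual crux, and the step that makes everything work, is the observation that splitting the Einstein identity along the $J$-invariant/anti-invariant decomposition isolates exactly the extremal equation on the anti-invariant side, while the invariant part encodes the relation between $\mathring{\mathrm{Ric}}(h)$ and the complex Hessian of $s$ that the converse direction of Derdzi\'nski's theorem will later exploit.
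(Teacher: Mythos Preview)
Your argument is correct. The extremality portion coincides with the paper's: both derive the identity $\mathring{\mathrm{Ric}}(h)=-2\,s^{-1}\mathring{\Hess}_h(s)$ from the conformal change formula and then observe that the $J$-invariance of the K\"ahler Ricci tensor forces the anti-invariant part of $\Hess_h(s)$ to vanish, which is Calabi's extremality criterion.

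Where you diverge is in the Bach-flatness step. The paper does not appeal to the conformal invariance of the Bach-flat condition; instead it invokes the explicit formula
\[
B=\tfrac{1}{12}\bigl[\scal(h)\,(\mathrm{Ric}_h)_0+2\,(\Hess(\scal(h)))_0\bigr]
\]
valid for a K\"ahler metric $h$, and then plugs in the identity $(\mathrm{Ric}_h)_0=-2\,s^{-1}(\Hess(s))_0$ just established, which makes the two terms cancel. Your route (Einstein $\Rightarrow$ Bach-flat, and Bach-flatness is a conformally invariant condition in dimension $4$) is shorter and more conceptual, but it imports two external facts. The paper's computation is more self-contained and, importantly, it records the relation $s\,(\mathrm{Ric}_h)_0+2\,(\Hess(s))_0=0$ explicitly; this relation is exactly what the converse lemma (Lemma~\ref{lem:Derdzinsky equivalence between vanishing Bach and scalar eq}) exploits, so the paper's approach dovetails more directly with the subsequent argument.
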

\begin{proof}
	Let $\sigma:M \to \mathbb{R}$ be a positive function and $g=\sigma^{-2} h$. Then, the Ricci tensors and scalar curvatures of $h$ and $g$ are related by
	\begin{align*}
		\mathrm{Ric}_g & = \mathrm{Ric}_h + 2 \sigma^{-1} \Hess(\sigma) - \left[ \sigma^{-1} \Delta \sigma + 3 \sigma^{-2} |d \sigma|^2 \right] h \\
		\scal(g) & = \sigma^2 \scal(h) - 6 \sigma \Delta \sigma - 12 |d \sigma|^2,
	\end{align*}
	where, on the right hand side, $\Hess$, $\Delta$ and $|\cdot|$ are all taken with respect to $h$.
	
	Now, suppose that $g$ is Einstein, i.e. $\mathrm{Ric}_g=\lambda g.$ Inserting into the above formulas gives
	\begin{align*}
		\lambda \sigma^{-2} h & = \mathrm{Ric}_h + 2 \sigma^{-1} \Hess(\sigma) - \left[ \sigma^{-1} \Delta \sigma + 3 \sigma^{-2} |d \sigma|^2 \right] h ,
	\end{align*}
	which we can rearrange to
	\begin{align*}
		\mathrm{Ric}_h & = - 2 \sigma^{-1} \Hess(\sigma) + \left[ \sigma^{-1} \Delta \sigma + 3 \sigma^{-2} |d \sigma|^2 +  \lambda \sigma^{-2} \right] h .
	\end{align*}
	As for the scalar curvature, we find that
	\begin{align*}
		\scal(h) =  6\sigma^{-1} \Delta \sigma +12\sigma^{-2}|d\sigma|^2 + 4\lambda \sigma^{-2} .
	\end{align*}
	It follows from the equation for the Ricci curvature that if $h$ is K\"ahler, then $\Hess(\sigma)$ is $J$-invariant, as $\mathrm{Ric}_h$ and $h$ both are. In particular, if $\sigma=\scal(h)$ we find that $h$ is actually an extremal K\"ahler metric. In this latter situation we have
	$$(\scal(h))^3 -  6 \scal(h) \Delta \scal(h) - 12|d\scal(h)|^2 =   4\lambda,$$
	and 
	$$\mathrm{Ric}_h = - 2 (\scal(h))^{-1} \Hess(\scal(h)) + \frac{1}{4} \left[ \scal(h) - 2 (\scal(h))^{-1} \Delta \scal(h) \right] h ,$$
	which we can rewrite as
	$$(\mathrm{Ric}_h)_0 = - 2 (\scal(h))^{-1} (\Hess(\scal(h)))_0,$$
	where $(\cdot)_0$ denotes the trace free part of the symmetric tensor $(\cdot)$. The proof is concluded by noticing that Bach tensor of the extremal K\"ahler metric $h$ is given by
	$$B=\frac{1}{12} \left[ \scal(h)(\mathrm{Ric}_h)_0 +2 (\Hess(\scal(h)))_0 \right],$$
	and therefore vanishes for any metric $h$ as above.
\end{proof}

\begin{remark}
	The proof of the previous proposition also shows that if $h$ is an extremal K\"ahler metric, then its scalar curvature $\scal(h)$ satisfies
	\begin{equation}\label{eq:scalar equation for the scalar curvature}
		(\scal(h))^3 -  6 \scal(h) \Delta \scal(h)- 12|d\scal(h)|^2 =   constant.
	\end{equation}
\end{remark}

It turns out that the vanishing of the Bach tensor follows from the condition that $J \nabla \scal(h)$ be a Killing field and the equation
$$(\scal(h))^3 -  6 \scal(h) \Delta \scal(h) - 12|d\scal(h)|^2 =   constant.$$

\begin{lemma}[Derdzi\'nski]\label{lem:Derdzinsky equivalence between vanishing Bach and scalar eq}
	Let $h$ be an extremal K\"ahler metric with scalar curvature $\scal(h)$. Then, the vanishing of the Bach tensor of $h$ is equivalent to $J\nabla \scal(h)$ being a Killing vector field and equation (\ref{eq:scalar equation for the scalar curvature}).
\end{lemma}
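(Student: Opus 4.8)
The plan is to reduce the vanishing of the Bach tensor to an Einstein condition on the conformally related metric $g = (\scal(h))^{-2}h$, using the two conformal formulas already recorded in the proof of Lemma~\ref{lem:Derdzinsky g Einstein to h}. Writing $s = \scal(h)$, the conformal transformation of the Ricci tensor gives, for the trace-free parts,
\[
(\mathrm{Ric}_g)_0 = (\mathrm{Ric}_h)_0 + 2s^{-1}(\Hess s)_0 ,
\]
so the Bach tensor formula $12B = s(\mathrm{Ric}_h)_0 + 2(\Hess s)_0$ from that same proof rewrites as $12B = s\,(\mathrm{Ric}_g)_0$. Hence $B = 0$ if and only if $(\mathrm{Ric}_g)_0 = 0$, i.e. $g$ is Einstein. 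Moreover the conformal formula for the scalar curvature gives $\scal(g) = s^3 - 6s\,\Delta s - 12|ds|^2$, which is exactly the left-hand side of equation~(\ref{eq:scalar equation for the scalar curvature}); thus that equation says precisely that $\scal(g)$ is constant.

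The next step is to isolate the two conditions in the statement via the complex structure $J$. For any K\"ahler metric $(\mathrm{Ric}_h)_0$ is $J$-invariant, whereas the trace-free Hessian $(\Hess s)_0$ is $J$-invariant precisely when $J\nabla s$ is Killing (its $J$-anti-invariant part measures the failure of $\nabla s$ to be a holomorphic gradient). Thus for the extremal metric $h$ the tensor $(\mathrm{Ric}_g)_0$ is $J$-invariant and trace-free, so under the standard identification it corresponds to a primitive $(1,1)$-form $\psi$, which in complex dimension two is anti-self-dual. The forward implication is then immediate: if $B = 0$ then $g$ is Einstein, so $J\nabla s$ is Killing (the assumed extremality) and $\scal(g)$ is constant by Schur's lemma.

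For the converse — the implication actually needed, that extremality together with~(\ref{eq:scalar equation for the scalar curvature}) forces $B = 0$ — suppose $J\nabla s$ is Killing and $\scal(g)$ is constant. The contracted second Bianchi identity for $g$ gives $\mathrm{div}_g (\mathrm{Ric}_g)_0 = \tfrac14\, d\,\scal(g) = 0$, so $\psi$ is co-closed; being anti-self-dual, $\psi$ is therefore harmonic. What remains, and what I expect to be the main obstacle, is to show this forces $\psi = 0$: one must rule out a nonzero harmonic anti-self-dual $2$-form built from the trace-free Ricci tensor of a conformally K\"ahler metric. I plan to close this either by computing the Bach tensor of a K\"ahler surface directly from the explicit eigenvalues of $W^+$ together with the second Bianchi identity — reading off that $B = 0$ splits exactly into the Killing condition and equation~(\ref{eq:scalar equation for the scalar curvature}) — or by a Weitzenb\"ock argument applied to $\psi$ combined with that same equation, following Derdzi\'nski \cite{d} and LeBrun \cite{LeBrun_sd}.
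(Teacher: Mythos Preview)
Your identity $12B = s\,(\mathrm{Ric}_g)_0$ is correct and makes the forward implication clean (Schur on each component of $\{s\neq 0\}$, then continuity of $s^3-6s\Delta s-12|ds|^2$). The converse, however, has a genuine gap, and the harmonic--form route you propose cannot close it. The lemma is a pointwise/local statement, whereas ``$\psi$ harmonic $\Rightarrow \psi=0$'' is a global one --- and even on a compact $4$--manifold it fails whenever $b_2^->0$. There is also a metric mismatch you are gliding over: the contracted Bianchi identity gives $\mathrm{div}_g(\mathrm{Ric}_g)_0=0$, but $\psi=(\mathrm{Ric}_g)_0(J\cdot,\cdot)$ uses $J$, which is parallel for $h$, not for $g$; so translating $g$--divergence-freeness of $(\mathrm{Ric}_g)_0$ into co-closedness of $\psi$ (for either metric) is not automatic. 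A Weitzenb\"ock argument would inherit the same issues.

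The paper's proof avoids all of this by staying pointwise. One computes $\mathrm{Ric}_h(\nabla s)$ and $\mathrm{Ric}_h(J\nabla s)$ via commutator identities, uses $J$--invariance of $\mathrm{Ric}_h$ to get $\nabla\Delta s = 2\,\mathrm{div}\,\nabla^2 s$, and then evaluates $12B(\nabla s)$ directly from the formula $12B=s(\mathrm{Ric}_h)_0+2(\Hess s)_0$. The result is
\[
12\,B(\nabla s)=\tfrac{1}{12}\,\nabla\bigl(6s\Delta s+12|\nabla s|^2-s^3\bigr),
\]
so equation~(\ref{eq:scalar equation for the scalar curvature}) is equivalent to $B(\nabla s)=0$. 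Now the key algebraic observation: $B$ is symmetric, trace-free and $J$--invariant on a $4$--dimensional inner-product space, hence has two double eigenvalues $\pm\lambda$; in particular $B$ is either zero or invertible at each point. Thus $B(\nabla s)=0$ forces $B=0$ wherever $\nabla s\neq 0$, and then everywhere by continuity. This linear-algebra step is exactly what replaces your global harmonic argument; it is also available to you via your own identity, since $(\mathrm{Ric}_g)_0=12s^{-1}B$ has the same eigenvalue structure.
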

\begin{proof}
	For simplicity, in the course of this poof we shall write $\scal(h)=s$ and $\mathrm{Ric}_h = \mathrm{Ric}$. The extremality of $h$ implies that $\nabla^2 s$ is $J$-invariant, i.e. $\nabla^2 s(J \cdot , J \cdot )= \nabla^2 s (\cdot , \cdot)$, which we can write as 
	$$- \nabla^2 s(J \cdot , \cdot ) = \nabla^2 s(J \cdot , J^2 \cdot )= \nabla^2 s (\cdot , J\cdot),$$
	which is equivalent to $\nabla J \nabla s $ being skew-symmetric, i.e. to $J \nabla s$ being Killing.
	
	Next, we compute $\mathrm{Ric}(\nabla s)$ and $\mathrm{Ric}(J\nabla s)$ which are respectively given by
	\begin{align*}
		\mathrm{Ric}_{ik} \nabla_k s & = \nabla_k \nabla_i \nabla_k s - \nabla_i \nabla_k \nabla_k s \\
		& = \nabla_i \Delta s + \nabla_k \nabla_k \nabla_i s ,\\
		\mathrm{Ric}_{ik} J^{kl}\nabla_l s & = \nabla_k \nabla_i J^{kl}\nabla_l s - \nabla_i \nabla_k J^{kl}\nabla_l s \\
		& = - \nabla_k \nabla_k J^{il}\nabla_l s .
	\end{align*}
	where we have used the fact that $J\nabla s$ is Killing and so $\nabla J \nabla s$ is skew-symmetric, i.e. $\nabla_i J^{jl}\nabla_l s + \nabla_j J^{il}\nabla_l s =0$ and in particular $\nabla_k J^{kl}\nabla_l s =0$. Hence, viewing $\mathrm{Ric}$ as an endomorphism of the tangent bundle, we have found that
	\begin{align*}
		\mathrm{Ric} ( \nabla s ) & =  \nabla \Delta s - \mathrm{div} \nabla^2 s ,\\
		\mathrm{Ric} ( J \nabla s) & =  J \mathrm{div} \nabla^2 s .
	\end{align*}
	However, given that $(h,J)$ is a K\"ahler structure $\mathrm{Ric}$ must be $J$-invariant and so $\mathrm{Ric}(\nabla s)= -J \mathrm{Ric}(J\nabla s)$ and so
	$$\nabla \Delta s  = 2 \mathrm{div} \nabla^2 s , $$
	which inserting above yields $\mathrm{Ric}(\nabla s) = \frac{1}{2} \nabla \Delta s$ and plugging into the Bach tensor gives
	\begin{align*}
		12 B(\nabla s) & =  s (\mathrm{Ric})_0 (\nabla s) +2 (\Hess(s))_0 (\nabla s) \\
		& =\left[ s \mathrm{Ric} (\nabla s) + 2 \Hess(s) (\nabla s)  \right] - \left[ \frac{1}{4}s^2 \nabla s - \frac{1}{2} \Delta s \nabla s  \right]  \\
		& = \left[ \frac{1}{2}s \nabla \Delta s +\nabla |\nabla s |^2 \right] - \left[ \frac{1}{4}s^2 \nabla s - \frac{1}{2} \Delta s \nabla s  \right] \\
		& =  \frac{1}{2}s \nabla \Delta s + \frac{1}{2} \Delta s \nabla s +\nabla |\nabla s |^2 - \frac{1}{12} \nabla s^3 \\
		& = \frac{1}{12} \nabla \left( 6 s \Delta s+12 |\nabla s|^2 - s^3 \right).
	\end{align*}
	Thus, under the hypothesis that $ 6 s \Delta s+12 |\nabla s|^2 - s^3$ be constant we find that $B(\nabla s)$ vanishes. Furthermore, as $B$ is trace free and $J$-invariant (for an extremal K\"ahler metric) it must have two double eigenvalues of opposite sign. Hence, we conclude that $B=0$ if $ 6 s \Delta s+12 |\nabla s|^2 - s^3$ is constant. 
	
	The converse follows from the calculation of $B(\nabla s)$.
\end{proof}

\begin{lemma}[Derdzi\'nski]
	Let $h$ be an extremal K\"ahler metric with scalar curvature $\scal(h)$ such that $J\nabla s$ is a Killing vector field and  
	$$(\scal(h))^3 -  6 \scal(h) \Delta \scal(h) - 12|d\scal(h)|^2 =  constant.$$
	Then, $g=\scal(h)^{-2} h$ is Einstein.
\end{lemma}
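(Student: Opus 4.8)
The plan is to read off the Einstein condition for $g=(\scal(h))^{-2}h$ directly from the conformal transformation laws already recorded in the proof of Lemma~\ref{lem:Derdzinsky g Einstein to h}, writing $s=\scal(h)$ and taking the conformal factor to be $\sigma=s$. Recall from there that
\begin{align*}
	\mathrm{Ric}_g &= \mathrm{Ric}_h + 2 s^{-1}\Hess(s) - \left[ s^{-1}\Delta s + 3 s^{-2}|ds|^2 \right] h, \\
	\scal(g) &= s^2\scal(h) - 6 s \Delta s - 12|ds|^2 = s^3 - 6 s\Delta s - 12|ds|^2,
\end{align*}
where $\Hess$, $\Delta$ and $|\cdot|$ are taken with respect to $h$. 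The second identity shows that $\scal(g)$ coincides with the left-hand side of equation~(\ref{eq:scalar equation for the scalar curvature}), so the hypothesis that $s^3-6s\Delta s-12|ds|^2$ be constant says precisely that $\scal(g)$ is constant. It therefore suffices to prove that the trace-free Ricci tensor $(\mathrm{Ric}_g)_0$ vanishes, for then $\mathrm{Ric}_g=\tfrac{1}{4}\scal(g)\,g$ with $\scal(g)$ constant, i.e. $g$ is Einstein.

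First I would observe that the trace-free part of a symmetric $2$-tensor is a conformal invariant: since $g=s^{-2}h$ one has $(\mathrm{tr}_g T)g=(\mathrm{tr}_h T)h$, so $T-\tfrac14(\mathrm{tr}_g T)g = T-\tfrac14(\mathrm{tr}_h T)h$. Applying this to the conformal Ricci formula, the term proportional to $h$ drops out and
\begin{equation*}
	(\mathrm{Ric}_g)_0 = (\mathrm{Ric}_h)_0 + 2 s^{-1}\bigl(\Hess(s)\bigr)_0 = s^{-1}\left[ s\,(\mathrm{Ric}_h)_0 + 2\bigl(\Hess(s)\bigr)_0\right].
\end{equation*}
The bracket is, up to the factor $12$, exactly the Bach tensor identified in the proof of Lemma~\ref{lem:Derdzinsky g Einstein to h}, namely $B=\tfrac{1}{12}\bigl[s\,(\mathrm{Ric}_h)_0+2(\Hess(s))_0\bigr]$. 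Hence $(\mathrm{Ric}_g)_0 = 12\,s^{-1}B$, and the whole problem reduces to showing that $B=0$.

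At this point I would invoke Lemma~\ref{lem:Derdzinsky equivalence between vanishing Bach and scalar eq}: for an extremal K\"ahler metric the vanishing of $B$ is equivalent to $J\nabla s$ being a Killing field together with equation~(\ref{eq:scalar equation for the scalar curvature}). Both of these are precisely the standing hypotheses of the present lemma, so $B=0$, whence $(\mathrm{Ric}_g)_0=0$ and $g$ is Einstein with $\scal(g)=s^3-6s\Delta s-12|ds|^2$.

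The conceptual content has in fact already been carried out in the two preceding lemmas, so I expect no genuine obstacle here; the only point requiring care is bookkeeping — matching the Laplacian sign convention used in the conformal formulas with the one in equation~(\ref{eq:scalar equation for the scalar curvature}) (so that the trace of $\mathrm{Ric}_g$ really reproduces $s^3-6s\Delta s-12|ds|^2$), and checking the conformal invariance of the trace-free part so that the reduction $(\mathrm{Ric}_g)_0=12s^{-1}B$ is legitimate rather than merely formal.
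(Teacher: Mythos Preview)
Your proposal is correct and follows exactly the approach the paper indicates: the paper's proof is a single sentence stating that one ``reverses the argument in the proof of Lemma~\ref{lem:Derdzinsky g Einstein to h} using Lemma~\ref{lem:Derdzinsky equivalence between vanishing Bach and scalar eq},'' and you have simply made this reversal explicit. The key identity $(\mathrm{Ric}_g)_0 = 12\,s^{-1}B$ together with the equivalence lemma is precisely what is meant.
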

\begin{proof}
	The proof follows from reversing the argument in the proof of Lemma \ref{lem:Derdzinsky g Einstein to h} using Lemma \ref{lem:Derdzinsky equivalence between vanishing Bach and scalar eq}.
\end{proof}

\begin{remark}
	Derdzi\'nski further states (see remark 4a) in \cite{d}) that the conformal structures containing a K\"ahler metric are precisely those containing metrics which satisfy $\delta W^+=0$. Given a K\"ahler metric that is a unique metric conformal to it which satisfies $\delta W^+=0$. Conversely, given a metric satisfying $\delta W^+=0$ there is a unique metric K\"ahler metric conformal to it. Note that Einstein metrics satisfy $\delta W^+=0$.
\end{remark}

\section{Diagonalising Calabi's metrics}\label{diagonalizing_metric}
The goal of this appendix is to define coordinates that are useful in understanding the extremal $U(2)$-invariant metrics arising from Calabi's ansatz such as those in our Theorem \ref{prop:scalar curvature and extremal metrics}. In particular we seek to diagonalise the metrics induced on the moments polytopes and the metrics induced on the fiber tori. This diagonalised form is likely to be useful in several contexts as well as in future work. Here we use it to study the asymptotic behaviour of the metric we obtain in the zero cone angle limit of the Einstein metrics with cone angle along the divisor at infinity on $\mathbb{CP}^2 \# \overline{ \mathbb{CP}^2}.$ This metrics appears in Proposition \ref{prop: Einstein cone angle}. We will show that it is the smooth Taub-bolt metric.
\begin{lemma}
For any $U(2)$-invariant extremal metric arising from Calabi's ansatz as in Theorem \ref{prop:scalar curvature and extremal metrics},  there are $1$-forms $\eta$, $\kappa$ and $\chi$ such the metric can be written as
$$
\frac{2rdr^2}{\pol}+\frac{\pol}{r}\eta^2+\left(\frac{1}{2rx_1x_2}\kappa^2+2rx_1x_2\chi^2\right)
$$
where $x_1,x_2$ are coordinates on the moment polytope and $r=x_1+x_2$ is invariant under the action of $U(2).$
\end{lemma}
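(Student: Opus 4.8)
The plan is to diagonalise $g_u$ by treating its moment-polytope block $\sum_{i,j}u_{ij}\,dx_i\,dx_j$ and its fibre block $\sum_{i,j}u^{ij}\,d\theta_i\,d\theta_j$ separately and then recombining, exploiting that a $U(2)$-invariant Calabi-ansatz metric depends on the polytope coordinates only through $r=x_1+x_2$ and a transverse variable. Concretely, I would introduce $s:=x_1-x_2$ (so that $x_1x_2=\tfrac14(r^2-s^2)$) together with the two fibre $1$-forms $\sigma:=x_1\,d\theta_1+x_2\,d\theta_2$ (the Hopf-type/connection direction) and $\tau:=d\theta_1-d\theta_2$. Since the ansatz furnishes $g_u$ explicitly through the Hessian of $u_h$ and its inverse recorded in Theorem \ref{prop:scalar curvature and extremal metrics} and Lemma \ref{lem:Convexity}, the whole statement reduces to algebra in these variables; the substance is to check that all cross terms disappear and that the surviving coefficients depend on $r$ alone, as $U(2)$-invariance demands.

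For the fibre block I would use the inverse Hessian $u^{ij}$ and complete the square. A direct computation gives the key identity $x_1\,d\theta_1^2+x_2\,d\theta_2^2=\tfrac1r\sigma^2+\tfrac{x_1x_2}{r}\tau^2$, whence, with $f$ as in Remark \ref{rem:Different parametrizations},
\[
\sum_{i,j}u^{ij}\,d\theta_i\,d\theta_j=2\Big(\tfrac1r-f\Big)\sigma^2+\tfrac{2x_1x_2}{r}\tau^2.
\]
Using $f=q/r^3$ and $\pol=r^2-q$ one has $\tfrac1r-f=\pol/r^3$, so this block equals $\tfrac{2\pol}{r^3}\sigma^2+\tfrac{2x_1x_2}{r}\tau^2$. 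Taking $\eta$ proportional to $\sigma/r$ and $\chi$ proportional to $\tau/r$ then reproduces exactly the terms $\tfrac{\pol}{r}\eta^2$ and $2rx_1x_2\chi^2$ of the claimed normal form.

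For the horizontal block I would pass to $(r,s)$, obtaining $\sum_{i,j}u_{ij}\,dx_i\,dx_j=\tfrac12\big(\tfrac{dx_1^2}{x_1}+\tfrac{dx_2^2}{x_2}\big)+\tfrac{h''}{2}\,dr^2$, and complete the square in $ds$. The first piece becomes $\tfrac{1}{2(r^2-s^2)}\big(r\,dr^2-2s\,dr\,ds+r\,ds^2\big)$, and isolating the $ds$-terms as a perfect square in $r\,ds-s\,dr$ leaves a residual $dr^2$ coefficient $\tfrac12\big(\tfrac1r+h''\big)$. The decisive simplification is the identity $\tfrac1r+h''=r/\pol$ (immediate from $h''=q/(r\pol)$ and $\pol+q=r^2$), which makes this coefficient a function of $r$ alone and reproduces the radial term of the normal form, while the perfect square supplies $\tfrac{1}{2rx_1x_2}\kappa^2$ with $\kappa$ proportional to $r\,ds-s\,dr$.

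The genuinely nontrivial points are two. First, the cancellation in the base: a priori the coefficient of $dr^2$ obtained after removing the $ds$-dependence could retain an $s$ (equivalently $x_1x_2$) dependence, which would be incompatible with the stated $U(2)$-invariant normal form; it is the extremal Calabi structure, entering through $h''=q/(r\pol)$, that forces dependence on $r$ only, so I expect this collapse to be the crux. Second, and more conceptual, is verifying that $\eta$, $\kappa$, $\chi$ are globally defined on the total space rather than merely on the open dense set carrying action-angle coordinates: here I would recognise $\eta$ as a multiple of the connection $1$-form of the circle bundle and $r\,ds-s\,dr$, $\tau$ as pullbacks of a coframe of the base $\CP^1$, so that the apparent degeneracies of $\kappa,\chi$ are exactly those of a coframe on $S^2$. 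I would close by remarking that on each orbit $\{r=\mathrm{const}\}\cong S^3/\bbZ_m$ the three forms restrict to a left-invariant coframe, so the expression above is simply the cohomogeneity-one (Berger-type) presentation of the metric — which is both the source of its geometric meaning and the natural setting in which to read off its asymptotics.
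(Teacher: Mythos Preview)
Your approach is correct and essentially identical to the paper's: both diagonalise the base and fibre blocks separately via direct substitution, using the same $1$-forms (the paper takes $\kappa=-x_2\,dx_1+x_1\,dx_2$, $\eta=(x_1\,d\theta_1+x_2\,d\theta_2)/r$, $\chi=(d\theta_1-d\theta_2)/r$, which agree with your $r\,ds-s\,dr$, $\sigma/r$, $\tau/r$ up to constants). One small overstatement to flag: the residual $dr^2$ coefficient $\tfrac12(\tfrac1r+h'')$ already depends on $r$ alone simply because $h''$ does, so the ``crux'' you identify is not a cancellation forced by extremality but an automatic consequence of the Calabi ansatz itself --- the extremal hypothesis plays no role in the diagonalisation, and the paper does not invoke it either.
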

The proof is straightforward.
\begin{proof}
We have $dr=dx_1+dx_2$ so that $\kappa=-x_2dx_1+x_1dx_2$ is orthogonal to $dr.$ In fact
$$
\langle dr,\kappa \rangle=
\begin{pmatrix}
1&1\\
\end{pmatrix}
\begin{pmatrix}
u^{11}&u^{12}\\
u^{12}&u^{22}
\end{pmatrix}
\begin{pmatrix}
-x_2\\
x_1
\end{pmatrix}=-x_2(u^{11}+u^{12})+x_1(u^{22}+u^{12}).
$$
where $u$ is the symplectic potential of the metric, $u_{ij}$ are the entries of $\Hess(u)$ and $u^{ij}$ are the entries of $(\Hess(u))^{-1}$. Now, recall that
	$$
	(\Hess(u))^{-1}=2
	\begin{pmatrix}
		x_1-x_1^2f&-x_1x_2 f\\
		-x_1x_2 f&x_2-x_2^2f\
	\end{pmatrix}.
	$$
where $f$ is a function of $r$ which is given by $f=\frac{q}{r^3}=\frac{r^2-\pol}{r^3}$ with $q$ a quartic polynomial of the form $q(r)=\sum_{i=1}^4 \frac{q_i}{i!} r^i$ as in  Theorem \ref{prop:scalar curvature and extremal metrics} and $\pol(r)=r^2-q(r).$ So 
$$-x_2(u^{11}+u^{12})+x_1(u^{22}+u^{12})=0\implies \langle dr,\kappa \rangle=0.$$
On the other hand 
$$
\begin{pmatrix}
dr\\
\kappa
\end{pmatrix}=
\begin{pmatrix}
1&1\\
-x_2&x_1
\end{pmatrix}
\begin{pmatrix}
dx_1\\
dx_2
\end{pmatrix}
$$
so that 
$$
\begin{pmatrix}
dx_1\\
dx_2
\end{pmatrix}=
\frac{1}{r}\begin{pmatrix}
x_1&-1\\
x_2&1
\end{pmatrix}
\begin{pmatrix}
dr\\
\kappa
\end{pmatrix}.
$$
Replacing $dx_1, dx_2$ in
$$
\sum_{i,j}^2u_{ij}dx_idx_j
$$ we see that
$$
\begin{aligned}
\sum_{i,j}^2u_{ij}dx_idx_j&=&\left(u_{11}+u_{22}-2u_{12}\right)\frac{\kappa^2}{r^2}+\left(x_1^2u_{11}+x_2^2u_{22}+2x_1x_2u_{12}\right)\frac{dr^2}{r^2}\\
&& +\left(x_1u_{11}-x_2^2u_{22}+(x_2-x_1)u_{12}\right)\frac{dr\otimes \kappa}{r^2},\\
&=&\frac{r}{2x_1x_2}\frac{\kappa^2}{r^2}+\frac{2}{r(1-rf)}\frac{dr^2}{r^2},\\
&=&\frac{1}{2rx_1x_2}{\kappa^2}+\frac{2}{\pol}\frac{dr^2}{r},
\end{aligned}
$$
where we have used that
	$$
	(\Hess(u))^{-1}=2
	\begin{pmatrix}
		x_1-x_1^2f&-x_1x_2 f\\
		-x_1x_2 f&x_2-x_2^2f\
	\end{pmatrix}.
	$$
Now set
$$
\begin{cases}
\eta=\frac{x_1d\theta_1+x_2d\theta_2}{r},\\
\chi=\frac{d\theta_1-d\theta_2}{r}.
\end{cases}
$$	
We have 
$$
\begin{pmatrix}
\eta\\
\chi
\end{pmatrix}=
\frac{1}{r}\begin{pmatrix}
x_1&x_2\\
1&-1
\end{pmatrix}
\begin{pmatrix}
d\theta_1\\
d\theta_2
\end{pmatrix}
$$
so that 
$$
\begin{cases}
d\theta_1=x_2\chi+\eta\\
d\theta_2=-x_1\chi+\eta.
\end{cases}
$$
Replacing in 
$$
\sum_{i,j}^2u^{ij}d\theta_id\theta_j,
$$ we see that
$$
\begin{aligned}
\sum_{i,j}^2u_{ij}dx_idx_j&=&\left(x_2^2u^{11}+x_1^2u^{22}-2x_1x_2\right)\chi^2+(u^{11}+2u^{12}+u^{22})\eta^2\\
& &+\left(x_2u^{11}-x_1u^{22}+u^{12}(x_2-x_1)\right)\chi\otimes \eta \\
&=&2r x_1x_2\chi^2+\frac{\pol}{r}\eta^2.&
\end{aligned}
$$
To sum up we see that a $U(2)$-invariant toric metric $g_u$ with symplectic potential $u$ is given by
$$
\frac{2}{r\pol}{dr^2}+\frac{\pol}{r}\eta^2+\frac{1}{2rx_1x_2}{\kappa^2}+2r x_1x_2\chi^2.
$$
\end{proof}

\begin{prop}
The limiting metric from Proposition \ref{prop: Einstein cone angle} i.e. the zero cone angle limit of the cone-angle Einstein metrics we construct on $(\mathbb{CP}^2 \# \overline{ \mathbb{CP}^2} ) \backslash \Sigma$ is toric ALF.  In particular it is the smooth Taub-bolt metric.
\end{prop}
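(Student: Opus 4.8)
The plan is to feed the specific limiting metric into the diagonalised form obtained in the previous lemma and read off its asymptotic geometry directly. Recall from Proposition \ref{prop: Einstein cone angle} that this metric is $\scal(g)^{-2}g$, where $g$ is the Bach-flat K\"ahler metric with $m=1$ and $as(a)=6$, whose defining data are $\pol(r)=a^2\tilde{\pol}(r/a)=\tfrac{a^2}{8}(t+1)(t-1)(t-3)^2$ with $t=r/a$, and $\scal(g)(r)=\tfrac{3}{a}(3-r/a)$. It is already established there that this metric is Ricci-flat ($S=0$) and complete, with the single end at $r\to 3a$, and it is $U(2)$-invariant hence in particular $T^2$-toric. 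So the only substantive thing left to verify is that the end at $r\to 3a$ is asymptotically locally flat; the identification with the smooth Taub-bolt metric then follows from the uniqueness of toric, Ricci-flat, ALF gravitational instantons in \cite{BG}.

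First I would substitute the above $\pol$ and the conformal factor $\scal(g)^{-2}=\tfrac{a^4}{9(3a-r)^2}$ into the diagonalised expression
$$
\scal(g)^{-2}g=\scal(g)^{-2}\left(\frac{2}{r\pol}dr^2+\frac{\pol}{r}\eta^2+\frac{1}{2rx_1x_2}\kappa^2+2rx_1x_2\chi^2\right),
$$
and analyse the four orthogonal blocks as $r\to 3a$. Using the parametrisation $x_1=r\sin^2(\psi/2)$, $x_2=r\cos^2(\psi/2)$ and $\phi=\theta_1-\theta_2$, a direct computation shows the last two blocks combine into $\tfrac{r}{2}\,g_{S^2}$, the round metric on the base $\mathbb{CP}^1$, so the rescaled base metric is $\tfrac{a^4}{9(3a-r)^2}\cdot\tfrac{r}{2}\,g_{S^2}$. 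Since $\pol\sim(3a-r)^2$ near $r=3a$, the $\eta$-block tends to the constant $\tfrac{a^3}{27}\eta^2$, so the Hopf-fibre direction stays of bounded length, while the radial block, after introducing the geodesic coordinate $u$ with $du=\scal(g)^{-1}\sqrt{2/(r\pol)}\,dr$, gives the relation $3a-r\sim c_0/u$ and hence $du^2$ to leading order. Together these yield the leading asymptotics
$$
\scal(g)^{-2}g\;\sim\;du^2+C_1\,u^2\,g_{S^2}+C_2\,\eta^2,\qquad u\to+\infty,
$$
which is the standard ALF model: a circle (the $\eta$-fibre) of asymptotically constant length fibred over the cone on $(S^2,g_{S^2})$, with cubic volume growth consistent with the estimate $\mathrm{Vol}(B_u)\sim u^3$ found in Proposition \ref{prop: Einstein cone angle}. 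The collapse of $\eta$ at $r=a$ exhibits the zero-section $\mathbb{CP}^1$ as the bolt, matching the Taub-bolt topology on $(\mathbb{CP}^2\#\overline{\mathbb{CP}^2})\setminus\Sigma$.

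The main obstacle is upgrading this leading-order picture to the ALF condition actually required by \cite{BG}: one must produce an explicit diffeomorphism of the end onto the total space of the degree-one circle bundle over $\mathbb{R}^3\setminus B^3$ under which the metric differs from the flat ALF model by a term decaying like $u^{-\tau}$, $\tau>0$, together with its derivatives. Concretely this means expanding each of the four blocks to subleading order in $3a-r$, checking that the deviation of the rescaled base from the flat Euclidean sphere decays at a uniform rate, and verifying that $\eta$ is asymptotically the connection $1$-form of the Hopf bundle, i.e. that $d\eta$ is, to leading order, a constant multiple of the area form of $(S^2,g_{S^2})$ so that the fibre over large spheres is indeed $S^3$. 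Once the ALF asymptotics are confirmed at this level, the metric is a toric, Ricci-flat, ALF gravitational instanton carrying a bolt, and the classification in \cite{BG} identifies it with the smooth Taub-bolt metric, completing the proof.
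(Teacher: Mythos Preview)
Your argument follows essentially the same route as the paper: plug the limiting $\pol$ and $\scal(g)$ into the diagonalised form from the lemma, verify that the $\eta$-block stabilises to a constant while the remaining directions produce a cone over a round $S^2$, and then invoke \cite{BG}. The paper uses the coordinate $\rho=(3-r/a)^{-1}$ rather than your geodesic $u$, and simply records the asymptotic model
\[
\frac{2a\,d\rho^2}{27}+\frac{a^3}{27}\eta^2+\frac{a^2\rho^2}{9}\left(\frac{1}{2rx_1x_2}\kappa^2+2rx_1x_2\chi^2\right)
\]
together with the observation that $R=\partial_{\theta_1}+\partial_{\theta_2}$ satisfies $\eta(R)=1$ and that the bracketed metric is round; it treats this as sufficient for ``toric ALF'' without the subleading decay analysis you flag as the main obstacle, so your caution there is not strictly needed for the paper's purposes.

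There is, however, one genuine step you are skipping in the last sentence. The classification in \cite{BG} does not yield a unique metric from ``toric, Ricci-flat, ALF, with a bolt'': it yields the Kerr--Taub-bolt \emph{family}. The paper closes this gap by reading off the moment polytope: the limiting metric has three smooth edges, placing it in the Kerr--Taub-bolt family, and the moment-map image of the edge at infinity is parallel to one of the smooth edges, which singles out the smooth Taub-bolt member (the $a=0$, $n\neq 0$ case in the notation of \cite{BG}). You should add this polytope argument, since without it the appeal to \cite{BG} only locates the metric within a one-parameter family rather than identifying it uniquely.
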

In the proof of Proposition \ref{prop: Einstein cone angle} we showed that the limiting metric has cubic volume growth. The toric ALF condition does not a priori follow from this. But using the fact that the metric is indeed toric ALF it follows from the uniqueness results in \cite{BG} that this metric must be the smooth Taub-bolt metric. In fact the polytope of the limiting metric has 3 smooth edges so it is among the Kerr-Taub-bolt family. On the other hand the moment map image of its edge at infinity is parallel to one of the (smooth) edges of the polytope therefore it must be the smooth Taub-bolt metric (obtained in the $a=0, n\ne 0$ case in the language of \cite{BG}).
\begin{proof}
As we have seen in the proof of Proposition \ref{prop: Einstein cone angle}, the limiting metric is conformal to a K\"ahler toric metric $g$ corresponding to a symplectic potential with associated polynomial
$$
\pol(r)=\frac{a^2}{8}\left(\frac{r^2}{a^2}-1\right)\left(3-\frac{r}{a}\right)^2.
$$
As we saw in the proof of Proposition \ref{prop: Einstein cone angle}  the scalar curvature of such a metric is given by
$$
\scal(g)=\frac{3}{a}\left(3-\frac{r}{a}\right).
$$
The singularity occurs at $r=3a.$ Set $\rho=\left(3-\frac{r}{a}\right)^{-1}$. We have that, close to $r=3a,$
$$
\pol(r)\simeq \frac{a^2}{\rho^{2}}.
$$
We also have $dr=\frac{ad\rho}{\rho^2}$ and $\scal(g)=\frac{3}{a\rho}$. The metric is given by
$$
\frac{g}{\scal^2(g)},
$$
so that according to the above argument, there are $1$-forms $\eta, \chi, \kappa$ such that the metric is given by
$$
\frac{2}{r\pol(r) \scal^2(g)} {dr^2}+\frac{\pol}{r\scal^2(g)}\eta^2+\frac{1}{\scal^2(g)}\left(\frac{1}{2rx_1x_2}{\kappa^2}+2r x_1x_2\chi^2\right).
$$
Now close to $r=3a,$
$$
\frac{\pol}{r\scal^2(g)}\simeq \frac{a^3}{27}
$$
in particular this quantity is bounded. On the other hand 
$$
\frac{2}{r\pol(r) \scal^2(g)}{dr^2}\simeq\frac{2\rho^4}{27a}\frac{a^2d\rho^2}{\rho^4}=\frac{2ad\rho^2}{27}.
$$
The Einstein metric is therefore asymptotic to
$$
\frac{2ad\rho^2}{27}+\frac{a^3 \eta^2}{27}+\frac{a^2 \rho^2}{9}\left(\frac{1}{2rx_1x_2}{\kappa^2}+2r x_1x_2\chi^2\right),
$$
Note that for $R=\frac{\partial}{\partial \theta_1}+\frac{\partial}{\partial \theta_2}$ it can easily be checked that $\eta(R)=1$ and $R\lrcorner\eta=0.$ Also the metric $\left(\frac{1}{2rx_1x_2}{\kappa^2}+2r x_1x_2\chi^2\right)$ has constant curvature (because it is a round metric). We conclude that our Einstein metric toric ALF because $(R,\eta)$ and $\frac{1}{2x_1x_2}{\kappa^2}+2 x_1x_2\chi^2$ are torus invariant.
\end{proof}

\end{document}